\newif\ifshowkeys
\newcommand{\lbl}[1]{\label{#1}\textup{[\texttt{#1}]}\par}
\newcommand{\lbl}{\label}
\newcommand{\Hom}       {\operatorname{Hom}}
\newcommand{\head}	{\operatorname{head}}
\newcommand{\tail}	{\operatorname{tail}}
\newcommand{\CA}        {{\mathcal{A}}}
\newcommand{\CB}        {{\mathcal{B}}}
\newcommand{\CC}        {{\mathcal{C}}}
\newcommand{\CH}        {{\mathcal{H}}}
\newcommand{\CI}        {{\mathcal{I}}}
\newcommand{\CP}        {{\mathcal{P}}}
\newcommand{\CL}        {{\mathcal{L}}}
\newcommand{\CR}        {{\mathcal{R}}}
\newcommand{\CRb}       {{\mathcal{R}}_{\text{bool}}}
\newcommand{\CRl}       {{\mathcal{R}}_{\text{latt}}}
\newcommand{\CS}        {{\mathcal{S}}}
\newcommand{\bCL}	{\overline{\mathcal{L}}}
\newcommand{\hCL}	{\widehat{\mathcal{L}}}
\newcommand{\N}         {{\mathbb{N}}}
\newcommand{\Ni}        {{\mathbb{N}_{\infty}}}
\newcommand{\Nw}        {{\mathbb{N}_{\omega}}}
\newcommand{\Zpl}       {{\mathbb{Z}_{(p)}}}      
\newcommand{\QZpl}      {{\mathbb{Q}/\mathbb{Z}_{(p)}}}      
\newcommand{\Om}        {\Omega}
\newcommand{\Sg}        {\Sigma}
\newcommand{\al}        {\alpha}
\newcommand{\ep}        {\epsilon}
\newcommand{\tht}       {\theta}
\newcommand{\sg}        {\sigma}
\newcommand{\om}        {\omega}
\newcommand{\bsg}	{\overline{\sigma}}
\newcommand{\bphi}	{\overline{\phi}}
\newcommand{\bc}[1]	{\langle #1\rangle}
\newcommand{\ip}[1]	{\langle #1\rangle}
\newcommand{\ot}        {\otimes}
\newcommand{\tj}	{\widetilde{\jmath}}
\newcommand{\sm}        {\setminus}
\newcommand{\sse}       {\subseteq}
\newcommand{\st}        {\;|\;}
\newcommand{\xra}       {\xrightarrow}
\newcommand{\ov}[1]     {\overline{#1}}
\newcommand{\Smash}     {\wedge}
\newcommand{\Wedge}     {\vee}
\newcommand{\bigWedge}  {\bigvee}
\newcommand{\convto}    {\Longrightarrow}
\newcommand{\colim}  {\operatornamewithlimits{\underset{\longrightarrow}{lim}}}
\renewcommand{\:}{\colon}
\newtheorem{theorem}{Theorem}[section]
\newtheorem{lemma}[theorem]{Lemma}
\newtheorem{proposition}[theorem]{Proposition}
\newtheorem{corollary}[theorem]{Corollary}
\theoremstyle{definition}
\newtheorem{remark}[theorem]{Remark}
\newtheorem{definition}[theorem]{Definition}
\newwrite\refs
\renewcommand\@setref[3]{%
        \ifx#1\relax
                \write\refs{'#3' \thepage\space undefined}%
                \protect \G@refundefinedtrue
                \nfss@text{\reset@font\bfseries ??}%
                \@latex@warning{Reference `#3' on page \thepage\space
                                undefined}%
        \else
                \write\refs{'#3' \thepage\space
                            \expandafter\@secondoftwo#1}%
                \expandafter#2#1\null
        \fi
}
\begin{document}
\title{A combinatorial model for the known Bousfield classes}
\author{N.~P.~Strickland}
\address{
School of Mathematics and Statistics, 
The University of Sheffield,
Sheffield S3 7RH, 
UK
}
\email{N.P.Strickland@sheffield.ac.uk}

\begin{abstract}
 We give a combinatorial construction of an ordered semiring $\CA$,
 and show that it can be identified with a certain subquotient of the
 semiring of $p$-local Bousfield classes, containing almost all of the
 classes that have previously been named and studied.  This is a
 convenient way to encapsulate most of the known results about
 Bousfield classes.
\end{abstract}

\maketitle 

\section{Introduction}

Fix a prime $p$, and let $\CL$ denote the set of Bousfield classes
in the $p$-local stable category (which can be regarded as an ordered
semiring in a natural way).  This note is an attempt to organise
many of the known results about the structure of $\CL$ in a more
coherent way.

One of the main open questions about $\CL$ is Ravenel's Telescope
Conjecture (TC).  The statement will be recalled in
Remark~\ref{rem-tc}.  Many people suspect that TC is false, but this
has still not been proven.  We will define an ordered semiring $\bCL$
which is, in a certain sense, the largest quotient of $\CL$ in which
TC becomes true.  We will then define (in an explicit, combinatorial
way) another ordered semiring $\CA$ and a function $\phi\:\CA\to\CL$
such that the composite
\[ \CA \xra{\phi} \CL \xra{\pi} \bCL \]
is an injective homomorphism of ordered semirings.  (However, $\phi$
itself is probably not a semiring homomorphism, unless TC holds.) For
almost all elements $x\in\CL$ that have been named and studied, we
have $\pi(x)\in\pi\phi(\CA)$.  Thus, $\CA$ is a good model for the
known part of $\CL$.

\begin{remark}\lbl{rem-exceptions}
 We will mention two exceptions to the idea that $\CA$ captures all
 known phenomena in $\bCL$.  First consider the spectra $BP/J$
 from~\cite[Definition 2.7]{ra:lrc}, where $J$ is generated by an
 invariant regular sequence of infinite length.  Ravenel shows that
 for different $J$ these have many different Bousfield classes, but
 only one of them is in the image of $\phi\:\CA\to\CL$.  It is
 unlikely that the situation is any better in $\bCL$.  Next, the
 paper~\cite{marash:tls} introduces a number of new Bousfield classes
 in the course of studying the telescope conjecture.  It is reasonable
 to conjecture that their images in $\bCL$ lie in $\pi\phi(\CA)$, but
 we have not considered this question carefully.
\end{remark}

\section{Ordered semirings}

\begin{definition}\lbl{defn-osr}
 By an \emph{ordered semiring} we will mean a set $\CR$ equipped with
 elements $0,1\in\CR$ and binary operations $\Wedge$ and $\Smash$ such
 that:
 \begin{itemize}
  \item[(a)] $\Wedge$ is commutative and associative, with $0$ as an
   identity element.
  \item[(b)] $\Smash$ is commutative and associative, with $1$ as an
   identity element.
  \item[(c)] $\Smash$ distributes over $\Wedge$.
  \item[(d)] For all $u\in\CR$ we have $0\Smash u=0$ and $1\Wedge u=1$
   and $u\Wedge u=u$.
 \end{itemize}
\end{definition}
It is easy to check that there is a natural partial order on such an
object, where $u\leq v$ iff $u\Wedge v=v$.  The binary operations
preserve this order, and $0$ and $1$ are the smallest and largest
elements.  Moreover, $u\Wedge v$ is the smallest element satisfying
$w\geq u$ and $w\geq v$.  

\begin{definition}
 Let $\CR$ be an ordered semiring.  We say that $\CR$ is
 \emph{complete} if every subset $S\sse\CR$ has a least upper bound 
 $\bigWedge S\in\CR$.  We say that $\CR$ is \emph{completely
  distributive} if, in addition, for all $S\sse\CR$ and $x\in\CR$ we
 have 
 \[ \bigWedge\{x\Smash s\st s\in S\} = x\Smash\bigWedge S. \]
\end{definition}

We next recall the definition of the Bousfield semiring $\CL$.
\begin{definition}\lbl{defn-L}
 We write $\CB$ for the category of $p$-local spectra in the sense of
 stable homotopy theory.  This has a coproduct, which is written
 $X\Wedge Y$ and is also called the wedge product.  There is also a
 smash product, written $X\Smash Y$.  Up to natural isomorphism, both
 operations are commutative and associative, and the smash product
 distributes over the wedge product.  The $p$-local sphere spectrum
 $S$ is a unit for the smash product, and the zero spectrum is a unit
 for the wedge product.  

 For any object $E\in\CB$ we put 
 \[ \bc{E} = \{X\in\CB\st E\Smash X=0\}, \]
 and call this the \emph{Bousfield class} of $E$.  We then put 
 \[ \CL = \{\bc{E}\st E\in\CB\}. \]
 (This is a set rather than a proper class, by a theorem of
 Ohkawa~\cite{oh:ihh,dwpa:ot}.)  It is straightforward to check
 that this has well-defined operations satisfying
 \begin{align*}
  \bc{E}\Wedge\bc{F} &= \bc{E\Wedge F} = \bc{E}\cap\bc{F} \\
  \bc{E}\Smash\bc{F} &= \bc{E\Smash F}
   = \{X\st E\Smash X\in\bc{F}\} = \{X\st F\Smash X\in\bc{E}\}.
 \end{align*}
 It is then easy to check that this gives an ordered semiring, with
 top and bottom elements as follows:
 \begin{align*}
  1 &= \bc{S} = \{0\} \\
  0 &= \bc{0} = \CB.
 \end{align*}
 The resulting ordering of $\CL$ is given by $\bc{E}\leq\bc{F}$ iff
 $\bc{E}\supseteq\bc{F}$.  

 Next, recall that $\CB$ has a coproduct (written $\bigWedge_iX_i$)
 for any family of objects $(X_i)_{i\in I}$, and these satisfy 
 $W\Smash\bigWedge_iX_i\simeq\bigWedge_i(W\Smash X_i)$.  It follows
 that $\CL$ is completely distributive, with
 $\bigWedge_i\bc{E_i}=\bc{\bigWedge_iE_i}$.  
\end{definition}

\begin{definition}\lbl{defn-quot}
 Let $\CR$ be an ordered semiring, and let $\ep\in\CR$ be an
 idempotent element (so $\ep\Smash\ep = \ep$).  We put 
 \[ \CR/\ep = \{a\in\CR\st a\geq\ep\}. \]
 We define a surjective function $\pi\:\CR\to\CR/\ep$ by
 $\pi(a)=a\Wedge\ep$.
\end{definition}

\begin{proposition}\lbl{prop-quot}
 There is a unique ordered semiring structure on $\CR/\ep$ such that
 $\pi$ is a homomorphism.  Moreover, if $\phi\:\CR\to\CS$ is any
 homomorphism of ordered semirings with $\phi(\ep)=0$, then there is a
 unique homomorphism $\ov{\phi}\:\CR/\ep\to\CS$ with
 $\ov{\phi}\circ\pi=\phi$. 
\end{proposition}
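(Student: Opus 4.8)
The plan is to check that the operations on $\CR$ descend to $\CR/\ep$ along $\pi$, and then verify the universal property. First I would observe the key structural fact: since $\ep$ is idempotent and $\ep\leq 1$, for any $a\in\CR$ we have $\pi(a)=a\Wedge\ep\geq\ep$, so $\pi$ indeed lands in $\CR/\ep$; moreover $\pi$ restricted to $\CR/\ep$ is the identity, because $a\geq\ep$ means $a\Wedge\ep=\ep$ wait — one must be careful: $a\geq\ep$ means $a\Wedge\ep=a$, not $=\ep$, using the convention $u\leq v\iff u\Wedge v=v$. So $\pi|_{\CR/\ep}=\mathrm{id}$, and $\pi$ is idempotent as a self-map of $\CR$ (landing in $\CR/\ep$). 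The forced definitions are then $a\Wedge' b:=\pi(a\Wedge b)$ and $a\Smash' b:=\pi(a\Smash b)$ with unit $1'=\pi(1)=1\Wedge\ep=\ep$ — no, again: $1\Wedge\ep=1$ by axiom (d), so $\pi(1)=1$, hence $1$ is already $\geq\ep$ and serves as the $\Smash$-unit; and $0'=\pi(0)=0\Wedge\ep=\ep$.

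Next I would verify that these descended operations agree with the \emph{restrictions} of the original ones wherever possible, which streamlines the axiom-checking. For $a,b\geq\ep$ one has $a\Wedge b\geq\ep$ already, so $a\Wedge' b=a\Wedge b$; thus $\Wedge$ needs no modification on $\CR/\ep$, and axioms (a) and the idempotence/absorption clauses in (d) for $\Wedge$ are inherited directly, with new bottom element $\ep$. For $\Smash$ the subtlety is genuine: $a\Smash b$ need not exceed $\ep$, so one genuinely needs $a\Smash' b=(a\Smash b)\Wedge\ep$. Here the essential computation is that $\pi$ is multiplicative on all of $\CR$: $\pi(a)\Smash'\pi(b)=\big((a\Wedge\ep)\Smash(b\Wedge\ep)\big)\Wedge\ep$, and expanding via distributivity plus $\ep\Smash\ep=\ep$ and $0\Smash u=0$-type absorptions shows this equals $(a\Smash b)\Wedge\ep=\pi(a\Smash b)$. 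Granting that $\pi\:\CR\to\CR/\ep$ is a multiplicative, $\Wedge$-preserving, unit-preserving surjection, associativity, commutativity and distributivity of $\Smash'$, and the mixed absorption laws, all transport from $\CR$ along the surjection $\pi$ by a routine diagram chase.

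For uniqueness of the semiring structure: if $(\CR/\ep,\boxplus,\boxtimes)$ is any ordered-semiring structure making $\pi$ a homomorphism, then for $a,b\in\CR/\ep$ we have $a\boxplus b=\pi(a)\boxplus\pi(b)=\pi(a\Wedge b)=a\Wedge' b$ and likewise $a\boxtimes b=\pi(a\Smash b)=a\Smash' b$, using $\pi|_{\CR/\ep}=\mathrm{id}$; the units are forced as $\pi(1),\pi(0)$. For the universal property, given $\phi\:\CR\to\CS$ with $\phi(\ep)=0$, the only candidate is $\ov\phi:=\phi|_{\CR/\ep}$, which satisfies $\ov\phi\circ\pi=\phi$ precisely because $\phi(a\Wedge\ep)=\phi(a)\Wedge\phi(\ep)=\phi(a)\Wedge 0=\phi(a)$. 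That $\ov\phi$ is a homomorphism follows from $\ov\phi(a\Wedge' b)=\phi\pi(a\Wedge b)=\phi(a\Wedge b)=\phi(a)\Wedge\phi(b)=\ov\phi(a)\Wedge\ov\phi(b)$ and similarly for $\Smash'$, and uniqueness is immediate from surjectivity of $\pi$. The only step with any real content is the multiplicativity of $\pi$ on $\CR$, i.e.\ the identity $\big((a\Wedge\ep)\Smash(b\Wedge\ep)\big)\Wedge\ep=(a\Smash b)\Wedge\ep$; I expect that to be the main obstacle, though it is still just a short manipulation using distributivity and the absorption axioms (c), (d).
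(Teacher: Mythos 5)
Your argument arrives at the correct conclusion, but it takes a more circuitous route than necessary and contains one factual slip that the paper's cleaner observation makes moot. You assert that the ``subtlety is genuine'' because ``$a\Smash b$ need not exceed $\ep$'' for $a,b\in\CR/\ep$; this is actually false. Since the binary operations preserve the order (noted just after Definition~\ref{defn-osr}) and $\ep$ is idempotent, for $a,b\geq\ep$ one has $a\Smash b\geq\ep\Smash\ep=\ep$, so $\CR/\ep$ is closed under $\Smash$ just as it is under $\Wedge$. The paper exploits this: the operations on $\CR/\ep$ are the literal \emph{restrictions} of those on $\CR$, with $\ep$ as the new zero element. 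Every axiom not mentioning zero is then inherited verbatim, and the two that do mention zero ($\ep\Wedge u=u$ and $\ep\Smash u=\ep$ for $u\geq\ep$) follow at once from $u\geq\ep$ and from $\ep=\ep\Smash\ep\leq\ep\Smash u\leq\ep\Smash 1=\ep$. There is nothing to push forward and no multiplicativity of $\pi$ to check.

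Your approach --- define the operations by $a\Smash' b:=\pi(a\Smash b)$, prove that $\pi$ is multiplicative on all of $\CR$ via the identity $\bigl((a\Wedge\ep)\Smash(b\Wedge\ep)\bigr)\Wedge\ep=(a\Smash b)\Wedge\ep$, and transport axioms along the surjection --- does succeed, and that identity is a true and occasionally useful fact. But it is more machinery than the statement requires, and it signals that you missed the closure property that trivializes the whole verification. Your treatment of uniqueness of the structure and of the universal property (restrict $\phi$ to $\CR/\ep$, check $\ov\phi\circ\pi=\phi$ using $\phi(\ep)=0$, conclude uniqueness from surjectivity of $\pi$) matches the paper's argument exactly.
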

\begin{proof}
 The set $\CR/\ep$ clearly contains $1$ and is closed under $\Smash$
 and $\Wedge$.  We claim that these operations make $\CR/\ep$ into an
 ordered semiring, with $\ep$ as a zero element.  All axioms not
 involving zero are the same as the corresponding axioms for $\CR$.
 The axioms involving zero say that we should have $\ep\Wedge u=u$ and
 $\ep\Smash u=\ep$ for all $u\in\CR/\ep$, and this follows directly
 from the definition of $\CR/\ep$ and the idempotence of $\ep$.  It is
 clear that this is the unique structure on $\CR/\ep$ for which $\pi$
 is a homomorphism.  If $\phi\:\CR\to\CS$ has $\phi(\ep)=0$ then we
 can just take $\ov{\phi}$ to be the restriction of $\phi$ to $\CR/\ep$.
 This is clearly a homomorphism, with 
 \[ \ov{\phi}(\pi(a)) = \phi(a\Wedge\ep) = \phi(a) \Wedge \phi(\ep) =
     \phi(a)\Wedge 0 = \phi(a)
 \]
 as required.
\end{proof}

\begin{remark}\lbl{rem-quot-complete}
 If $\CR$ is complete, or completely distributive, then we find that
 $\CR/\ep$ has the same property.
\end{remark}

\begin{remark}\lbl{rem-defn-bCL}
 In Definition~\ref{defn-basic} we will introduce certain Bousfield
 classes $a(n)=\bc{C_nK'(n)}$ for $n\in\N$, and put
 $\ep(n)=\bigWedge_{i<n}a(i)$ for $n\in\Ni$.  These will all be zero
 iff TC holds.  In Lemma~\ref{lem-aa} we will check that $\ep(n)$ is
 idempotent, which allows us to define
 $\bCL=\colim_{n<\infty}\CL/\ep(n)$.  This will be our main object of
 study.  
\end{remark}

\begin{definition}\lbl{defn-ideal}
 Let $\CR$ be an ordered semiring.  An \emph{ideal} in $\CR$ is a
 subset $\CI\sse\CR$ such that 
 \begin{itemize}
  \item $0\in\CI$
  \item For all $x,y\in\CI$ we have $x\Wedge y\in\CI$
  \item For all $x\in\CR$ and $y\in\CI$ we have $x\Smash y\in\CI$. 
 \end{itemize}
\end{definition}

\begin{remark}\lbl{rem-ideal}
 Let $S$ be any subset of $\CR$, and let $\CI$ be the set of elements
 $x\in\CR$ that can be expressed in the form 
 $x = \bigWedge_{i=1}^n y_i\Smash z_i$ for some $n\in\N$ and
 $y\in\CR^n$ and $z\in S^n$.  (This should be interpreted as $x=0$ in
 the case $n=0$.)  Just as in the case of ordinary rings, this is the
 smallest ideal containing $S$, or in other words, the ideal generated
 by $S$.
\end{remark}

\begin{lemma}\lbl{lem-ideal}
 Let $\CR$ be an ordered semiring.
 \begin{itemize}
  \item[(a)] Suppose that every ideal in $\CR$ has a least upper
   bound; then $\CR$ is complete.
  \item[(b)] Suppose that $\CR$ is complete, and that for $x\in\CR$
   and every ideal $\CI\sse\CR$ we have
   $x\Smash\bigWedge\CI=\bigWedge(x\Smash\CI)$; then $\CR$ is
   completely distributive.
 \end{itemize}
\end{lemma}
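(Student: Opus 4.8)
The plan is to prove both parts by showing that the least upper bound of an arbitrary subset $S \subseteq \CR$ can be computed as the least upper bound of the ideal $\CI$ that $S$ generates, using the explicit description of $\CI$ from Remark \ref{rem-ideal}. For part (a), given $S \subseteq \CR$, let $\CI$ be the ideal generated by $S$ as in Remark \ref{rem-ideal}, and set $b = \bigWedge \CI$, which exists by hypothesis. First I would check that $b$ is an upper bound for $S$: every $z \in S$ lies in $\CI$ (take $n=1$, $y_1 = 1$, $z_1 = z$, so $z = 1 \Smash z \in \CI$), hence $z \leq b$. Conversely, suppose $c$ is any upper bound for $S$; I claim $c$ is an upper bound for all of $\CI$, so that $b \leq c$. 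Indeed, if $x = \bigWedge_{i=1}^n y_i \Smash z_i$ with $z_i \in S$, then $z_i \leq c$ for each $i$, and since $\Smash$ preserves the order we get $y_i \Smash z_i \leq y_i \Smash c \leq c$ (the last inequality because $c \Smash c \leq 1 \Smash c = c$ — wait, more carefully: $y_i \Smash c \leq 1 \Smash c = c$ since $y_i \leq 1$ and $\Smash$ is order-preserving); then $\Wedge$ preserves the order so $x \leq c$, and the $n=0$ case $x = 0 \leq c$ is trivial. Therefore $b$ is the least upper bound of $S$, and $\CR$ is complete.

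For part (b), I would argue along the same lines, now also invoking the complete distributivity hypothesis over ideals. Fix $x \in \CR$ and $S \subseteq \CR$; let $\CI$ be the ideal generated by $S$, so that by part (a)'s argument $\bigWedge S = \bigWedge \CI$. The key observation is that $x \Smash \CI$ is contained in the ideal $\CJ$ generated by the set $x \Smash S = \{x \Smash s : s \in S\}$, and in fact $\bigWedge(x \Smash \CI) = \bigWedge(x \Smash S)$: one inclusion of upper-bound sets is clear since $x \Smash S \subseteq x \Smash \CI$; for the other, each element of $x \Smash \CI$ has the form $x \Smash \bigWedge_i y_i \Smash z_i = \bigWedge_i (x \Smash y_i) \Smash z_i$ (using that $\Smash$ is commutative, associative, and distributes over finite $\Wedge$), and since $z_i \in S$ this is $\leq$ any upper bound of $x \Smash S$ by the same monotonicity argument as before. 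Then I compute
\[ x \Smash \bigWedge S = x \Smash \bigWedge \CI = \bigWedge(x \Smash \CI) = \bigWedge(x \Smash S), \]
where the middle equality is the hypothesis of (b) applied to the ideal $\CI$. This gives complete distributivity.

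The only mildly delicate point — and the place I would be most careful — is the monotonicity bookkeeping: one must consistently use that $u \leq 1$ for every $u$ (axiom (d)), that both $\Smash$ and $\Wedge$ are order-preserving in each variable, and that the empty wedge is $0$, in order to see that an upper bound of $S$ (resp. of $x \Smash S$) is automatically an upper bound of the whole generated ideal. None of this is hard, but it is the content of the lemma: it reduces completeness and complete distributivity from arbitrary subsets to ideals, which are the subsets one can actually control via the generators-and-relations description of Remark \ref{rem-ideal}. I would also remark that in part (b) one could alternatively avoid introducing $\CJ$ explicitly and just verify directly that $\bigWedge(x\Smash \CI)$ and $\bigWedge(x \Smash S)$ have the same set of upper bounds, which is perhaps cleaner.
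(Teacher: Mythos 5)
Your proof is correct and takes essentially the same approach as the paper: reduce the least upper bound of an arbitrary $S$ to that of the ideal $\CI$ it generates by matching up the upper-bound sets, and then for (b) observe that $x\Smash\CI$ and $x\Smash S$ have the same upper bounds (the paper phrases this as $x\Smash\CI$ being precisely the ideal generated by $x\Smash S$, which is equivalent). You simply spell out the monotonicity bookkeeping that the paper labels ``easy to see.''
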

\begin{proof}\leavevmode
 \begin{itemize}
  \item[(a)] Let $S$ be a subset of $\CR$, and let $\CI$ be the ideal
   that it generates.  It is then easy to see that the upper bounds
   for $\CI$ are the same as the upper bounds for $S$, and $\CI$ has a
   least upper bound by assumption, so this is also a least upper
   bound for $S$.
  \item[(b)] Now suppose we also have an element $x\in\CR$, and that
   $x\Smash\bigWedge\CI=\bigWedge(x\Smash\CI)$.  We find that
   $x\Smash\CI$ is the same as the ideal generated by $x\Smash S$, so 
   \[ \bigWedge(x\Smash S) = \bigWedge(x\Smash\CI) = 
       x\Smash\bigWedge\CI = x\Smash\bigWedge S,
   \]
   as required.
 \end{itemize}
\end{proof}

We next define two canonical subsemirings for any ordered semiring
$\CR$.  This is an obvious axiomatic generalisation of work that
Bousfield did for $\CL$ in~\cite{bo:bas}.

\begin{definition}\lbl{defn-complement}
 Let $\CR$ be an ordered semiring, and let $x$ and $y$ be elemnts of
 $\CR$.  We say that $y$ is a \emph{complement} for $x$ (and
 \emph{vice versa}) if $x\Wedge y=1$ and $x\Smash y=0$.  If such a $y$
 exists, we say that $x$ is \emph{complemented}.
\end{definition}

\begin{lemma}\lbl{lem-complement}
 If $x$ has a complement then it is unique, and we have $x\Smash x=x$.
\end{lemma}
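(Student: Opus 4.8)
The plan is to run the standard Boolean-algebra manipulations, using only the axioms of Definition~\ref{defn-osr}: that $1$ is a unit for $\Smash$, that $0$ is a unit for $\Wedge$ (axiom (a)), that $0\Smash u=0$ (axiom (d)), and that $\Smash$ distributes over $\Wedge$ (axiom (c)). No part of the order structure, completeness, or the specific nature of $\CL$ will be needed; the statement is purely formal.

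For the idempotence claim, I would fix a complement $y$ of $x$ and compute $x = x\Smash 1 = x\Smash(x\Wedge y) = (x\Smash x)\Wedge(x\Smash y) = (x\Smash x)\Wedge 0 = x\Smash x$, where the second equality uses $x\Wedge y=1$, the third uses distributivity, the fourth uses $x\Smash y=0$, and the last uses that $0$ is a unit for $\Wedge$.

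For uniqueness, suppose $y$ and $y'$ are both complements of $x$. The same kind of computation gives $y = y\Smash 1 = y\Smash(x\Wedge y') = (y\Smash x)\Wedge(y\Smash y') = 0\Wedge(y\Smash y') = y\Smash y'$, using $y\Smash x=x\Smash y=0$ and commutativity of $\Smash$. Interchanging the roles of $y$ and $y'$ (and using $x\Wedge y=1$) yields $y' = y'\Smash y = y\Smash y'$ as well, so $y=y'$.

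I do not expect any genuine obstacle: each step invokes a single axiom. The only point that needs care is to remember that axiom (a), asserting $0$ is an identity for $\Wedge$, supplies $0\Wedge u=u$; this is exactly what turns the absorbing behaviour of $0$ under $\Smash$ (axiom (d)) into the cancellations used above. I would state the two computations in this order (idempotence first, then uniqueness), since neither depends on the other.
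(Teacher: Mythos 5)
Your proof is correct, and it is essentially identical to the paper's: the idempotence computation multiplies $x\Wedge y=1$ by $x$, and the uniqueness argument multiplies $x\Wedge y=1$ and $x\Wedge y'=1$ by the other complement to conclude $y=y\Smash y'=y'$.
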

\begin{proof}
 Let $y$ be a complement for $x$.  Multiplying the equation
 $x\Wedge y=1$ by $x$ and using $x\Smash y=0$ gives $x\Smash x=x$.

 Now let $z$ be another complement for $x$.  Multiplying the relation
 $x\Wedge y=1$ by $z$ gives $y\Smash z=z$.  Multiplying the relation
 $x\Wedge z=1$ by $y$ gives $y\Smash z=y$.  Comparing these gives
 $y=z$. 
\end{proof}

This validates the following:
\begin{definition}\lbl{defn-neg}
 For any complemented element $x$, we write $\neg x$ for the complement.
\end{definition}

\begin{definition}\lbl{defn-dist}
 For any ordered semiring $\CR$, we put 
 \begin{align*}
  \CRl &= \{x\in\CR\st x\Smash x = x\} \\
  \CRb &= \{x\in\CR\st x \text{ is complemented }\}.                      
 \end{align*}
\end{definition}

\begin{remark}\lbl{rem-dist-functor}
 Let $\phi\:\CR\to\CS$ be a homomorphism of ordered semirings.  Then
 it is clear that $\phi(\CRl)\sse\CS_{\text{latt}}$.  Moreover, if $x$
 and $y$ are complements of each other in $\CR$, we find that
 $\phi(x)$ and $\phi(y)$ are complements of each other in $\CS$.  It
 follows that $\phi(\CRb)\sse\CS_{\text{bool}}$.  In other words, both
 of the above constructions are functorial.
\end{remark}

\begin{proposition}\lbl{prop-dist}
 The set $\CRl$ is a subsemiring of $\CR$.  Moreover, for
 $x,y,z\in\CRl$ we have $x\leq y\Smash z$ iff $x\leq y$ and
 $x\leq z$, so the $\Smash$ product is just the meet operation for the
 natural ordering, and this makes $\CRl$ into a
 distributive lattice.
\end{proposition}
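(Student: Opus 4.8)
The plan is to verify the subsemiring axioms, then identify the $\Smash$ product with the meet operation on $\CRl$, and finally read off distributivity. For the subsemiring claim: $0\Smash 0=0$ by Definition~\ref{defn-osr}(d) and $1\Smash 1=1$ by Definition~\ref{defn-osr}(b), so $0,1\in\CRl$; and if $x,y\in\CRl$ then commutativity and associativity give $(x\Smash y)\Smash(x\Smash y)=(x\Smash x)\Smash(y\Smash y)=x\Smash y$, so $\CRl$ is closed under $\Smash$. The only mildly delicate point is closure under $\Wedge$. For this I would first record the general inequality $a\Smash b\leq a$, valid for all $a,b\in\CR$: indeed $b\leq 1$ and $\Smash$ preserves the order, so $a\Smash b\leq a\Smash 1=a$; since also $a\leq a\Wedge b$, this yields $a\Smash b\leq a\Wedge b$, that is, $(a\Smash b)\Wedge(a\Wedge b)=a\Wedge b$. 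Now for $x,y\in\CRl$, expanding with distributivity and using $x\Smash x=x$, $y\Smash y=y$,
\[ (x\Wedge y)\Smash(x\Wedge y)=\bigl((x\Smash x)\Wedge(y\Smash x)\bigr)\Wedge\bigl((x\Smash y)\Wedge(y\Smash y)\bigr)=x\Wedge y\Wedge(x\Smash y), \]
and the last term is absorbed by the inequality just proved, so $x\Wedge y\in\CRl$.

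Next I would show that for $x,y,z\in\CRl$ one has $x\leq y\Smash z$ iff $x\leq y$ and $x\leq z$. The forward implication follows from $y\Smash z\leq y$ and $y\Smash z\leq z$ (two applications of the inequality above) together with transitivity. For the converse, idempotence of $x$ is the crucial ingredient: from $x\leq y$ and $x\leq z$ and order-preservation of $\Smash$ we get $x=x\Smash x\leq y\Smash z$. This says precisely that $y\Smash z$ is the greatest lower bound of $y$ and $z$ within $\CRl$; since $\Wedge$ is already the least upper bound in $\CR$ and $\CRl$ is closed under it, $(\CRl,\Wedge,\Smash)$ is a lattice with join $\Wedge$ and meet $\Smash$.

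For distributivity one may invoke the standard fact that the two distributive laws are equivalent in any lattice, together with Definition~\ref{defn-osr}(c); alternatively a direct check is equally short. For $x,y,z\in\CRl$, expanding $(x\Wedge y)\Smash(x\Wedge z)$ by distributivity and using $x\Smash x=x$ gives $x\Wedge(x\Smash y)\Wedge(x\Smash z)\Wedge(y\Smash z)$, and then $x\Smash y\leq x$ and $x\Smash z\leq x$ let one absorb the middle two terms, leaving $x\Wedge(y\Smash z)$; this is the identity $(x\Wedge y)\Smash(x\Wedge z)=x\Wedge(y\Smash z)$ that expresses distributivity of the lattice $\CRl$. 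I do not anticipate a serious obstacle: the one point requiring care is that the meet furnished by $\Smash$ is a meet relative to $\CRl$ only, not relative to $\CR$, so one must not slip into using meets in $\CR$; the rest is bookkeeping with the axioms of Definition~\ref{defn-osr}.
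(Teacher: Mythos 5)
Your proof is correct and follows essentially the same route as the paper: the same verification that $\CRl$ contains $0,1$ and is closed under $\Smash$ and $\Wedge$ (your absorption via $x\Smash y\leq x\Wedge y$ is a minor variant of the paper's rewriting $y\Wedge(x\Smash y)=y$), and the same idempotence argument that $\Smash$ is the meet on $\CRl$. You go slightly further than the paper by spelling out the distributive identity $(x\Wedge y)\Smash(x\Wedge z)=x\Wedge(y\Smash z)$, which the paper leaves as a one-line assertion.
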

\begin{proof}
 It is clear that $\CR_{\text{latt}}$ contains $0$ and $1$ and is
 closed under $\Smash$.  Now suppose that $x,y\in\CRl$, and put
 $z=x\Wedge y$.  Using the commutativity and distributivity of
 $\Smash$, and the idempotence of $x$ and $y$, we obtain 
 \[ z\Smash z = x\Wedge y \Wedge (x\Smash y). \]
 We can rewrite $y\Wedge(x\Smash y)$ as
 $(1\Wedge x)\Smash y=1\Smash y=y$, so $z\Smash z=x\Wedge y=z$ as
 required.  This proves that $\CRl$ is a subsemiring.

 Now suppose that $x,y,z\in\CRl$ with $x\leq y$ and $x\leq z$.  We
 then get $x=x\Smash x\leq y\Smash z$ as required.  The converse holds
 in any ordered semiring, so we see that $\Smash$ is just the meet
 operation, as claimed.
\end{proof}

\begin{proposition}\lbl{prop-bool}
 The set $\CRb$ is a subsemiring of $\CRl$ and is a boolean algebra.
\end{proposition}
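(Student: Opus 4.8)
The plan is to reduce everything to the distributive lattice structure on $\CRl$ that has already been established in Proposition~\ref{prop-dist}. First, Lemma~\ref{lem-complement} tells us that any complemented element $x$ satisfies $x\Smash x=x$, so $\CRb\sse\CRl$. Next I would observe that the bottom and top of the lattice $\CRl$ are $0$ and $1$ (both lie in $\CRl$, and they are already the extreme elements of $\CR$), and that $\Smash$, $\Wedge$ restricted to $\CRl$ are the meet and join of the natural order; hence for $x\in\CRl$ the semiring notion of a complement ($x\Wedge y=1$, $x\Smash y=0$) coincides with the lattice-theoretic one. In other words, $\CRb$ is exactly the set of complemented elements of the distributive lattice $\CRl$, and the statement to prove is the classical fact that these form a Boolean subalgebra. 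Since $0$ and $1$ are complements of each other, both lie in $\CRb$.

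The real content is closure of $\CRb$ under $\Smash$ and $\Wedge$, via the De Morgan formulas: if $x,y\in\CRb$ with complements $\neg x,\neg y$, then I claim $\neg x\Wedge\neg y$ is a complement of $x\Smash y$ and $\neg x\Smash\neg y$ is a complement of $x\Wedge y$. The two identities of the form ``$\Smash = 0$'' follow directly from the semiring axioms, using only distributivity of $\Smash$ over $\Wedge$, the relations $x\Smash\neg x=y\Smash\neg y=0$, and $0\Smash u=0$; for instance
\[ (x\Smash y)\Smash(\neg x\Wedge\neg y) = (y\Smash(x\Smash\neg x))\Wedge(x\Smash(y\Smash\neg y)) = 0\Wedge 0 = 0. \]
The two identities of the form ``$\Wedge = 1$'' are where one must be a little careful, since distributivity of $\Wedge$ over $\Smash$ is \emph{not} one of the axioms of an ordered semiring. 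The point is that all the elements involved lie in $\CRl$, which by Proposition~\ref{prop-dist} is closed under both operations and is a \emph{distributive} lattice, so inside $\CRl$ that second distributive law is available; then
\[ (x\Smash y)\Wedge\neg x\Wedge\neg y = (x\Wedge\neg x\Wedge\neg y)\Smash(y\Wedge\neg y\Wedge\neg x) = (1\Wedge\neg y)\Smash(1\Wedge\neg x) = 1\Smash 1 = 1, \]
using $x\Wedge\neg x=y\Wedge\neg y=1$ and $1\Wedge u=1$, and the companion identity $(x\Wedge y)\Wedge(\neg x\Smash\neg y)=1$ is obtained by the dual computation. This proves $\CRb$ is closed under both operations, hence is a subsemiring of $\CRl$; it is also closed under $\neg$, since $x$ is a complement of $\neg x$.

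Finally, to conclude that $\CRb$ is a Boolean algebra: it is a subset of the distributive lattice $\CRl$ containing $0$ and $1$ and closed under meet $\Smash$ and join $\Wedge$, hence is itself a bounded distributive lattice; every element $x\in\CRb$ has a lattice complement $\neg x$ (this is what membership in $\CRb$ means, once one knows semiring- and lattice-complements agree in $\CRl$), and $\neg x\in\CRb$ by the previous paragraph. A bounded distributive lattice in which every element is complemented is precisely a Boolean algebra. I do not expect any genuine obstacle; the one subtlety worth flagging is exactly the missing distributive law at the semiring level, which forces the De Morgan ``join'' identities to be verified inside $\CRl$ rather than in $\CR$, and this is why Proposition~\ref{prop-dist} is needed rather than just the bare axioms.
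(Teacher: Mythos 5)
Your proof is correct and follows the same route as the paper: reduce to the De Morgan laws for complements and then invoke the standard fact that a bounded distributive lattice in which every element is complemented is a Boolean algebra. The paper dismisses the De Morgan verification as ``easy to check,'' whereas you spell it out and usefully flag that the two ``$\Wedge=1$'' identities appear to need distributivity of $\Wedge$ over $\Smash$, which is not a semiring axiom but does hold inside the distributive lattice $\CRl$; that is a legitimate and correct way to finish. One small remark: this second distributive law can actually be avoided even at that step. Starting from
\[ 1 = (x_0\Wedge y_0)\Smash(x_1\Wedge y_1)
     = (x_0\Smash x_1)\Wedge(x_0\Smash y_1)\Wedge(y_0\Smash x_1)\Wedge(y_0\Smash y_1), \]
which uses only the axiomatic distributive law, one observes that each of the last three terms is $\leq y_0\Wedge y_1$ (since for idempotents $u\Smash v\leq u$ and $u\Smash v\leq v$ in the semiring order), so monotonicity of $\Wedge$ gives $1\leq(x_0\Smash x_1)\Wedge(y_0\Wedge y_1)$, hence equality. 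Either argument is fine; yours is perhaps the more transparent, and it makes explicit where Proposition~\ref{prop-dist} is really being used.
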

\begin{proof}
 Lemma~\ref{lem-complement} shows that $\CRb\sse\CRl$.  Note also that
 if $x\in\CRb$ then $x$ is a complement for $\neg x$, so $\neg x$ lies
 in $\CRb$ as well.

 Now suppose that $x_0,x_1\in\CRb$, with complements $y_0$ and $y_1$.
 It is then easy to check that $y_0\Smash y_1$ and $y_0\Wedge y_1$ are
 complements for $x_0\Wedge x_1$ and $x_0\Smash x_1$, showing that
 $\CRb$ is closed under $\Wedge$ and $\Smash$.  It also contains $0$
 and $1$, so $\CRb$ is a subsemiring of $\CRl$.  By one of the
 standard definitions, a boolean algebra is just a distributive
 lattice in which every element has a complement, so $\CRb$ has this
 structure. 
\end{proof}

We can generalise the definition of $\neg x$ as follows.  Put
\[ A(x) = \{y \st x\Smash y = 0\}. \]
If $y$ is a complement for $x$, then it is easy to check that it is
the largest element in the set $A(x)$.  More generally, if $x$ does
not have a complement, but $A(x)$ still has a largest element, then we
can define $\neg x$ to be that largest element.  If $\CR$ is
completely distributive then we see that $\bigWedge A(x)$ is always an
element of $A(x)$ and so qualifies as $\neg x$.  In particular, this
operation is defined for all elements of the Bousfield semiring, as
was already discussed in~\cite{bo:bas}.  However, a homomorphism
$\phi\:\CR\to\CS$ need not satisfy $\phi(\neg x)=\neg\phi(x)$ in this
more general context, even if $\phi$ preserves infinite joins.  In
particular, we do not know whether the homomorphisms $\CA\to\bCL$ and
$\CL\to\bCL$ preserve negation.  Thus, although we can compute the
negation operation in $\CA$, this does not provide much information
about $\CL$, unless we restrict attention to $\CA_{\text{bool}}$.

We can generalise still further as follows:
\begin{definition}\lbl{defn-heyting}
 Let $\CR$ be an ordered semiring, and let $x$ and $z$ be elements of
 $\CR$.  Put 
 \[ A(x,z) = \{y\in\CR\st x\Smash y\leq z\}. \]
 \begin{itemize}
  \item If $A(x,z)$ has a largest element then we denote it by
   $(x\to z)$, and call it a \emph{Heyting element} for the pair
   $(x,z)$.  
  \item A \emph{strong Heyting element} for $(x,z)$ is an element
   $y\in\CR$ such that $x\Smash y\leq z\leq y$ and $x\Wedge y=1$.
 \end{itemize}
\end{definition}

A complement for $x$ is the same as a strong Heyting element for
$(x,0)$, and our more general definition of $\neg x$ is just the same
as $(x\to 0)$.

\begin{proposition}\lbl{prop-heyting}\leavevmode
 \begin{itemize}
  \item[(a)] Any strong Heyting element is a Heyting element.
  \item[(b)] If $\CR$ is completely distributive, then every pair has
   a Heyting element.
  \item[(c)] If $x$ is complemented, then $z\Wedge\neg x$ is a Heyting
   element for $(x,z)$.
  \item[(d)] Any homomorphism of ordered semirings preserves strong
   Heyting elements.
 \end{itemize}
\end{proposition}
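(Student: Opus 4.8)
The plan is to treat the four parts in the order (a), (c), (b), (d), since (a) is the natural tool for proving (c). None of these is hard; the only delicate point is a monotonicity step hidden inside (a).

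For (a), let $y$ be a strong Heyting element for $(x,z)$, so $x\Smash y\leq z\leq y$ and $x\Wedge y=1$. The first inequality says $y\in A(x,z)$, so it remains to check that $y$ is an upper bound for $A(x,z)$. Given $w\in A(x,z)$, I would multiply the relation $x\Wedge y=1$ by $w$ and distribute $\Smash$ over $\Wedge$ to get $w=(w\Smash x)\Wedge(w\Smash y)$. Now $w\Smash x=x\Smash w\leq z\leq y$ by hypothesis, and $w\Smash y\leq 1\Smash y=y$ because $w\leq 1$; since $\Wedge$ is order-preserving and $y\Wedge y=y$, this yields $w\leq y$. Hence $y$ is the largest element of $A(x,z)$, i.e.\ a Heyting element.

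For (c), with $x$ complemented I would check that $y:=z\Wedge\neg x$ is in fact a \emph{strong} Heyting element for $(x,z)$ and then invoke (a). Indeed $z\leq z\Wedge\neg x$ is automatic from the order; $x\Wedge y=x\Wedge\neg x\Wedge z=1\Wedge z=1$; and $x\Smash y=(x\Smash z)\Wedge(x\Smash\neg x)=(x\Smash z)\Wedge 0=x\Smash z\leq 1\Smash z=z$. For (b), assuming complete distributivity I would set $y=\bigWedge A(x,z)$ and compute $x\Smash y=\bigWedge\{x\Smash w\st w\in A(x,z)\}$; each term is $\leq z$, so $z$ bounds the set and the join is $\leq z$. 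Thus $y\in A(x,z)$, and as the join of $A(x,z)$ it is its largest element, so $(x\to z)$ exists.

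Finally, for (d), note that any homomorphism $\phi$ preserves the order (apply $\phi$ to the equation $u\Wedge v=v$ defining $u\leq v$), so applying $\phi$ to $x\Smash y\leq z\leq y$ and $x\Wedge y=1$ turns a strong Heyting element $y$ for $(x,z)$ into a strong Heyting element $\phi(y)$ for $(\phi(x),\phi(z))$. I expect the only place needing attention to be the step $w\Smash y\leq y$ in part (a): it is not part of the hypothesis on $y$, but follows from $w\leq 1$ together with axiom~(d).
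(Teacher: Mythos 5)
Your proposal is correct and takes essentially the same approach as the paper, part by part: (a) by multiplying $x\Wedge y=1$ by an arbitrary $w\in A(x,z)$ and using $z\leq y$; (c) by verifying $z\Wedge\neg x$ is a strong Heyting element and invoking (a); (b) by taking the join of $A(x,z)$ and using complete distributivity; (d) by observing a homomorphism preserves the order relations defining strong Heyting elements. The only cosmetic difference is in (a), where you bound the two terms $w\Smash x\leq z\leq y$ and $w\Smash y\leq y$ directly and conclude $w\leq y\Wedge y=y$, whereas the paper passes through $w\leq z\Wedge y$ and then uses $z\leq y$ to identify $z\Wedge y=y$; both rearrangements rely on the same implicit monotonicity step $w\Smash y\leq 1\Smash y=y$, which you rightly flag.
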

\begin{proof}\leavevmode
 \begin{itemize}
  \item[(a)] Let $y$ be a strong Heyting element for $(x,z)$.  Then
   $x\Smash y\leq z$, so $y\in A(x,z)$.  Let $u$ be any other element
   of $A(x,z)$, so $x\Smash u\leq z$.  Multiplying the relation
   $x\Wedge y=1$ by $u$ gives 
   \[ u=(u\Smash x)\Wedge(u\Smash y) \leq z\Wedge y, \]
   but we also have $z\leq y$ (as part of the definition of a strong
   Heyting element) so $u\leq y$ as required.
  \item[(b)] Suppose that $\CR$ is completely distributive, and put
   $y=\bigWedge A(x,z)$.  Complete distributivity implies that 
   \[ x\Smash y = \bigWedge\{x\Smash u\st u\in A(x,z)\} \leq z, \]
   so $y\in A(x,z)$, and clearly $y$ is the largest element of
   $A(x,z)$. 
  \item[(c)] Let $w$ be a complement for $x$, so $w\Smash x=0$ and
   $w\Wedge x=1$.  Put $y=z\Wedge w\geq z$.  Then 
   \begin{align*}
    x\Smash y &= (x\Smash z) \Wedge (x\Smash w) = x\Smash z \\
    x\Wedge y &= z\Wedge w\Wedge x = z\Wedge 1 = 1.
   \end{align*}
   It follows that $y$ is a strong Heyting element, as claimed.
  \item[(d)] This is clear from the definitions.
 \end{itemize}
\end{proof}
If we assume that $x\Smash x=x$ for all $x$ (so that $\CR=\CRl$) then
the Heyting elements satisfy a number of additional properties, such
as $x\Smash(x\to z)=x\Smash z$.  These properties are encapsulated by
the definition of a Heyting algebra (see~\cite[Section 1.1]{jo:ss},
for example).  They do not hold automatically in our more general
context, and we have not investigated exactly how much can be rescued.

\section{The combinatorial model}

\begin{definition}\lbl{defn-big}
 We put $\Ni=\N\cup\{\infty\}$, and give this the obvious order with
 $\infty$ as the largest element.  We will say that a subset
 $S\sse\Ni$ is \emph{small} if $S\sse [0,n)$ for some $n\in\N$;
 otherwise, we will say that $S$ is \emph{big}.  We will also say that
 $S$ is \emph{cosmall} if $\Ni\sm S$ is small, or equivalently $S$
 contains $[n,\infty]$ for some $n$.
\end{definition}
\begin{definition}\lbl{defn-Nw}
 We put $\Nw=\N\cup\{\om,\infty\}$, with the ordering 
 \[ 0 < 1 < 2 < 3 < \dotsb < \om < \infty. \]
\end{definition}

\begin{definition}\lbl{defn-A}
 The set $\CA$ has elements as follows:
 \begin{itemize}
  \item For each cosmall subset $T\sse\Ni$ and each $q\in\Ni$ we have
   an element $t(q,T)\in\CA$. 
  \item For each small subset $S\subset\Ni$ and each
   $m\in\Nw$ we have an element $j(m,S)\in\CA$.
  \item For each subset $U\sse\Ni$ we have an element $k(U)\in\CA$.
 \end{itemize}
 (For the corresponding elements of the Bousfield lattice, see
 Definition~\ref{defn-basic}.) 

 We write $\CA_t$ for the subset of elements of the form $t(q,T)$, and
 similarly for $\CA_j$ and $\CA_k$, so that
 $\CA=\CA_t\amalg\CA_j\amalg\CA_k$. 

 We define commutative binary operations $\Wedge$ and $\Smash$ on $\CA$
 as follows: 
 \begin{align*}
  t(q,T)\Wedge t(q',T') &= t(\min(q,q'),T\cup T') \\
  t(q,T)\Wedge j(m',S') &= t(q,T\cup S') \\
  t(q,T)\Wedge k(U')    &= t(q,T\cup U') \\
  j(m,S)\Wedge j(m',S') &= j(\max(m,m'),S\cup S') \\
  j(m,S)\Wedge k(U')    &= \begin{cases}
                            j(m,S\cup U') & \text{ if $U'$ is small } \\
                            k(S\cup U')   & \text{ if $U'$ is big } 
                           \end{cases} \\
  k(U)  \Wedge k(U')    &= k(U\cup U') &
 \end{align*}
 \begin{align*}
  t(q,T)\Smash t(q',T') &= t(\max(q,q'),T\cap T') \\
  t(q,T)\Smash j(m',S') &= \begin{cases}
                            j(m',T\cap S') & \text{ if } q\leq m' \\
                            k(T\cap S')    & \text{ if } q>m'
                           \end{cases}  \\
  t(q,T)\Smash k(U')    &= k(T\cap U') \\
  j(m,S)\Smash j(m',S') &= k(S\cap S') \\
  j(m,S)\Smash k(U')    &= k(S\cap U') \\
  k(U)  \Smash k(U')    &= k(U\cap U').
 \end{align*}
 We also put $0=k(\emptyset)$ and $1=t(0,\Ni)$.
\end{definition}

We next give some auxiliary definitions that will help us analyse the
structure of $\CA$. 
\begin{definition}\lbl{defn-tail}
 We write $\CP$ for the ordered semiring of subsets of $\Ni$, with the
 operations $\cup$ and $\cap$, and identity elements $0=\emptyset$ and
 $1=\Ni$.  We define $\tail\:\CA\to\CP$ by 
 \[ \tail(t(q,T)) = T \hspace{4em}
    \tail(j(m,S)) = S \hspace{4em}
    \tail(k(U)) = U.
 \]
\end{definition}
\begin{remark}\lbl{rem-tail}
 Inspection of the definitions shows that
 $\tail(x\Wedge y)=\tail(x)\cup\tail(y)$ and
 $\tail(x\Smash y)=\tail(x)\cap\tail(y)$ for all $x$ and $y$.  Once we
 have checked that $\CA$ is an ordered semiring, this will mean that
 $\tail\:\CA\to\CP$ is a homomorphism of ordered semirings.
 Inspection of the definitions also shows that
 \[ 
  \tail(x) = \{i\st k(i)\Smash x\neq 0\}
           = \{i\st k(i)\Smash x=k(i)\}
           = \{i\st k(i)\leq x\}.
 \]
\end{remark}

\begin{definition}\lbl{defn-head}
 We define 
 \[ \CH = \{t(q)\st q\in\Ni\} \amalg 
          \{j(m)\st m\in\Nw\} \amalg \{k\},
 \]
 and we define $\head\:\CA\to\CH$ in the obvious way.  
\end{definition}

\begin{remark}\lbl{rem-head-ops}
 The interaction of the head map with the operations can be summarised
 as follows:
 \begin{align*}
   t(q)\Wedge t(q') &= t(\min(q,q')) &
   t(q)\Smash t(q') &= t(\max(q,q')) \\
   t(q)\Wedge j(m') &= t(q) &
   t(q)\Smash j(m') &= j(m') \text{ \textbf{or} } k \\
   t(q)\Wedge k     &= t(q) &
   t(q)\Smash k     &= k \\
   j(m)\Wedge j(m') &= j(\max(m,m')) &
   j(m)\Smash j(m') &= k \\
   j(m)\Wedge k     &= j(m) \text{ \textbf{or} } k &
   j(m)\Smash k     &= k \\
   k   \Wedge k     &= k &
   k   \Smash k     &= k.
 \end{align*}
 Because of the indeterminate rules for $t(q)\Smash j(m')$ and
 $j(m)\Wedge k$, we cannot say that $\head\:\CA\to\CH$ is a
 homomorphism of ordered semirings.
\end{remark}

\begin{definition}\lbl{defn-j-tilde}
 We put $\N_*=\{\bot\}\amalg\Nw$, and give this the obvious order with
 $\bot$ as the smallest element.  For $m\in\N_*$ and $S\sse\Ni$ we put 
 \[ \tj(m,S) = 
     \begin{cases}
      j(m,S) & \text{ if } m > \bot \text{ and } S \text{ is small } \\
      k(S)   & \text{ if } m = \bot \text{ or } S \text{ is big }.
     \end{cases}
 \]
\end{definition}

\begin{remark}\lbl{rem-j-tilde}
 The elements $\tj(m,S)$ are distinct, except that $\tj(m,S)$ is
 independent of $m$ when $S$ is big.  The operations can be rewritten
 as follows: 
 \begin{align*}
  t(q,T) \Wedge  t(q',T')   &= t(\min(q,q'),T\cup T') \\
  t(q,T) \Wedge  \tj(m',S') &= t(q,T\cup S') \\
  \tj(m,S)\Wedge \tj(m',S') &= \tj(\max(m,m'),S\cup S') \\
  t(q,T) \Smash  t(q',T')   &= t(\max(q,q'),T\cap T') \\
  t(q,T) \Smash  \tj(m',S') &= \begin{cases}
                                \tj(m',T\cap S') & \text{ if } q\leq m' \\
                                \tj(\bot,T\cap S') & \text{ if } q> m'
                               \end{cases} \\
  \tj(m,S)\Smash \tj(m',S') &= \tj(\bot,S\cap S').
 \end{align*}
\end{remark}

\begin{proposition}\lbl{prop-osr}
 $\CA$ is an ordered semiring.
\end{proposition}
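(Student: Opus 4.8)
The plan is to verify the axioms of Definition~\ref{defn-osr} one family at a time, using two devices to keep the case analysis small. First, commutativity of $\Wedge$ and of $\Smash$ requires nothing: in Definition~\ref{defn-A} the formulas for a like-typed pair are visibly symmetric, and for each unordered pair of distinct types a single formula is recorded, so the operations are well defined and commutative by construction. Second, an element of $\CA$ is recovered from the pair $(\head(x),\tail(x))\in\CH\times\CP$, while inspection of the defining formulas --- as recorded in Remark~\ref{rem-tail} --- shows that $\tail$ sends every $\Wedge$ to the corresponding $\cup$ and every $\Smash$ to the corresponding $\cap$. Since $\CP$ is itself an ordered semiring, each identity still to be proved therefore splits into a ``tail part'', which is that same identity in $\CP$ and hence automatic, and a ``head part'', which is a finite combinatorial check. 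For the head part it is cleanest to work with the two-type presentation of the operations from Remark~\ref{rem-j-tilde}: the $\tj$-name of an element is redundant when its tail is big, but the rewritten formulas are nonetheless well defined on elements, so it is enough to check that the two sides of each axiom reduce to the same $\tj$- or $t$-expression.

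I would first dispose of everything apart from associativity and distributivity. With $0=k(\emptyset)=\tj(\bot,\emptyset)$ and $1=t(0,\Ni)$, the identity, absorption and idempotence laws have tail parts $W\cup\emptyset=W$, $W\cup\Ni=\Ni$, $W\cap\Ni=W$, $W\cap\emptyset=\emptyset$ and $W\cup W=W$ in $\CP$, and head parts amounting to $\min(q,q)=q$, $\max(q,0)=q$, $\max(m,m)=m$ and similar one-line facts, together with the observations that $q\not\leq\bot$ for every $q\in\Ni$ --- so that $t(q,T)\Smash\tj(\bot,S')=\tj(\bot,T\cap S')$ --- and $\bot\leq m$ for every $m\in\N_*$. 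Each of these is immediate.

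The real content is associativity of the two operations and distributivity, treated by running through the cases according to which of the two or three arguments have type $t(q,-)$. Associativity of $\Wedge$ is the easy one: a $\Wedge$-combination has type $t$ as soon as one argument does, with head the minimum of the $t$-heads present, and otherwise type $\tj$ with head the maximum of the $\tj$-heads, so both bracketings produce the same expression and associativity of $\min$, of $\max$ and of $\cup$ finishes it. For associativity of $\Smash$ and for $x\Smash(y\Wedge z)=(x\Smash y)\Wedge(x\Smash z)$ one has to carry the conditional appearing in $t(q,T)\Smash\tj(m',S')$ through the computation. The elementary facts about the totally ordered set $\N_*$ (which contains $\Ni$) that make the nested conditionals collapse in agreement are $\max(q,q')\leq m\iff q\leq m$ and $q'\leq m$; $q\leq\max(m,m')\iff q\leq m$ or $q\leq m'$; and the distributivity of $\max$ over $\min$; together with the distributivity of $\cap$ over $\cup$ in $\CP$ for the tail parts. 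I expect the only genuine obstacle to be carrying out this bookkeeping without slips: there is no conceptual difficulty, but the cases in which one argument has type $t$ and the others type $\tj$ are where the ``$q\leq m$'' test interacts with $\min$, $\max$ and $\bot$, and those are the ones I would write out in full.
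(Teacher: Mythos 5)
Your proposal is correct and follows essentially the same route as the paper: commutativity by construction, the identity/absorption/idempotence laws by inspection, the reduction of associativity and distributivity to ``tails'' (via the homomorphism $\tail\:\CA\to\CP$ of Remark~\ref{rem-tail}) and ``heads'' (organized via the $\tj$-presentation of Remark~\ref{rem-j-tilde}). You have spelled out more explicitly what the remaining case-check amounts to, identifying the elementary lattice facts about $\N_*$ that make the nested conditionals collapse, but the underlying strategy is identical to the paper's.
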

\begin{proof}
 The operations are commutative by construction, and it is immediate
 from the definitions that $0\Wedge x=1\Smash x=x\Wedge x=x$ and
 $0\Smash x=0$ and $1\Wedge x=1$.  This leaves the associativity and
 distributivity axioms.  Remark~\ref{rem-tail} takes care of the
 tails, so we just need to worry about the heads.  This is just a
 lengthy but straightforward check of cases, which is most efficiently
 done using Remark~\ref{rem-j-tilde}.  (We have also coded a partial
 formalisation using Maple.)
\end{proof}

The order on $\CA$ can be made more explicit as follows:
\begin{itemize}
 \item We have $t(q,T)\leq t(q',T')$ iff
  $T\sse T'$ and $q\geq q'$.
 \item We never have $t(q,T)\leq j(m,S)$ or $t(q,T)\leq k(U)$.
 \item We have $j(m,S)\leq t(q,T)$ iff $S\sse T$.
 \item We have $j(m,S)\leq j(m',S')$ iff 
  $S\sse S'$ and $m\leq m'$.
 \item We have $j(m,S)\leq k(U)$ iff $S\sse U$ and $U$ is big.
 \item We have $k(U)\leq t(q,T)$ iff $U\sse T$.
 \item We have $k(U)\leq j(m,S)$ iff $U\sse S$.
 \item We have $k(U)\leq k(U')$ iff $U\sse U'$.
\end{itemize}

We next want to show that $\CA$ is completely distributive.  Because
of Lemma~\ref{lem-ideal}, we can concentrate on ideals in $\CA$.

\begin{definition}\lbl{defn-tht}
 Let $\CI\sse\CA$ be an ideal.  We put 
 \begin{align*}
  A &= \bigcup \{\tail(u)\st u\in\CI\} \sse\Ni \\
  Q &= \{q\in\Ni\st t(q)\in\head(\CI)\} \sse\Ni \\
  M &= \{m\in\Nw\st j(m)\in\head(\CI)\} \sse\Nw.
 \end{align*}
 We define $\tht(\CI)\in\CA$ as follows:
 \begin{itemize}
  \item[(a)] If $Q=\emptyset$ and ($A$ is big or $M$ is empty), then
   $\tht(\CI)=k(A)$.
  \item[(b)] If $Q=\emptyset$, and $A$ is small, and $M$ is nonempty
   but has no largest element, then $\tht(\CI)=j(\om,A)$.
  \item[(c)] If $Q=\emptyset$, and $A$ is small, and $M$ has a largest
   element, then $\tht(\CI)=j(\max(M),A)$.
  \item[(d)] If $Q\neq\emptyset$, then $\tht(\CI)=t(\min(Q),A)$
 \end{itemize}
\end{definition}
(It would be possible to combine cases~(b) and~(c) in the above
definition, but it is more convenient to keep them separate, because
they behave differently in various arguments that will be given
later.)

\begin{lemma}\lbl{lem-tht-k}
 We also have $A=\{i\in\Ni\st k(i)\in\CI\}$, and $k(A)$ is the least
 upper bound for $\CI\cap\CA_k$.
\end{lemma}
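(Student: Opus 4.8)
The statement has two parts: (i) the set equality $A=\{i\in\Ni\st k(i)\in\CI\}$, and (ii) the claim that $k(A)$ is the least upper bound for $\CI\cap\CA_k$. I would prove them in that order, since (ii) uses (i). The work is essentially bookkeeping with the tail map and the explicit order relations listed just before Definition~\ref{defn-tht}, so I do not expect any genuine obstacle; the only thing to be careful about is the behaviour of big versus small sets in the order relations involving $k(U)$.

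\textbf{Part (i).} Write $B=\{i\in\Ni\st k(i)\in\CI\}$; I claim $A=B$. For $B\sse A$: if $k(i)\in\CI$ then by definition $\tail(k(i))=\{i\}\sse A$. For $A\sse B$: suppose $i\in A$, so $i\in\tail(u)$ for some $u\in\CI$. By the last display of Remark~\ref{rem-tail} we have $\tail(u)=\{i'\st k(i')\leq u\}$, so $k(i)\leq u$. Now $\CI$ is an ideal, hence downward closed (for $y\in\CI$ and $x\leq y$ we have $x=x\Smash y$ by the definition of the order, wait — more directly, $x\Wedge y=y$ gives no downward closure; instead use $x\Smash x = $ ... ). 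Here I would instead argue: $k(i)\leq u$ means $k(i)\Wedge u = u$, and I want $k(i)\in\CI$; the clean route is that $k(i)\Smash u = k(i)$ when $k(i)\leq u$ (this is the "or'' in Remark~\ref{rem-tail}, or follows since $\Smash$ is the meet on the sublattice — but $k(i)$ is idempotent: $k(i)\Smash k(i)=k(\{i\})=k(i)$). Since $\CI$ is an ideal and $u\in\CI$, we get $k(i)\Smash u\in\CI$, i.e. $k(i)\in\CI$, so $i\in B$. Thus $A=B$.

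\textbf{Part (ii).} First, $k(A)$ is an upper bound for $\CI\cap\CA_k$: any element of $\CI\cap\CA_k$ has the form $k(U)$ with $U=\tail(k(U))\sse A$ by definition of $A$, and $k(U)\leq k(A)$ iff $U\sse A$ by the order relations. Second, $k(A)$ is the \emph{least} upper bound: suppose $x\in\CA$ satisfies $k(U)\leq x$ for every $k(U)\in\CI\cap\CA_k$. By Part~(i), for each $i\in A$ we have $k(i)\in\CI\cap\CA_k$, so $k(i)\leq x$, hence $i\in\tail(x)$ by Remark~\ref{rem-tail}; thus $A\sse\tail(x)$. Now I split on $\head(x)$. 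If $x=t(q,T)$ then $A\sse T$ gives $k(A)\leq t(q,T)=x$ by the order relation $k(A)\leq t(q,T)\iff A\sse T$. If $x=j(m,S)$ then $A\sse S$ gives $k(A)\leq j(m,S)=x$ similarly. If $x=k(U)$ then $A\sse U$ gives $k(A)\leq k(U)=x$. In every case $k(A)\leq x$, so $k(A)$ is the least upper bound. This completes the proof; the one place to state carefully is the use of the idempotence $k(i)\Smash k(i)=k(i)$ together with the ideal axiom to get downward closure along elements $k(i)$, which is exactly what makes Part~(i) work.
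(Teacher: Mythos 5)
Your proposal is correct and takes essentially the same approach as the paper: for $A\subseteq\{i:k(i)\in\CI\}$ you use that $i\in\tail(u)$ means $k(i)\Smash u=k(i)$ and invoke the ideal axiom, for the reverse inclusion you use $\tail(k(i))=\{i\}$, and for the least-upper-bound claim you spell out the case analysis that the paper leaves implicit. The visible false start about downward closure in the middle of Part (i) is harmless since you recover the correct route, which is exactly the paper's.
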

\begin{proof}
 If $i\in A$ then there exists $u\in\CI$ with $i\in\tail(u)$, which
 means that $k(i)\Smash u=k(i)$.  As $\CI$ is an ideal, this means
 that $k(i)\in\CI$.  Conversely, if $k(i)\in\CI$ then
 $\{i\}=\tail(k(i))\sse A$.  This proves the alternative description
 of $A$, and the second claim follows easily from that.
\end{proof}

\begin{lemma}\lbl{lem-tht-max}
 In cases~(c) and~(d) of Definition~\ref{defn-tht} we have
 $\tht(\CI)\in\CI$, and $\tht(\CI)\geq u$ for all $u\in\CI$, so
 $\tht(\CI)$ is the largest element of $\CI$.
\end{lemma}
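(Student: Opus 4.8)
The plan is to handle cases~(c) and~(d) in parallel: first I would prove the membership $\tht(\CI)\in\CI$, and then prove that $\tht(\CI)$ dominates every $u\in\CI$, reading off comparisons from the explicit description of the order on $\CA$ given after Proposition~\ref{prop-osr}. The domination part is pure bookkeeping against that order table, so the only real content is the membership claim.

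For membership I would use two facts: by Lemma~\ref{lem-tht-k} we have $k(i)\in\CI$ for every $i\in A$, and $\CI$ is closed under \emph{finite} joins. In case~(d) we have $Q\neq\emptyset$, so setting $q_0=\min(Q)$ there is some $t(q_0,T_0)\in\CI$; since $T_0$ is cosmall and $T_0=\tail(t(q_0,T_0))\sse A$, the set $A\sm T_0$ is finite, hence
\[ t(q_0,A)=t(q_0,T_0)\Wedge\bigWedge_{i\in A\sm T_0}k(i)\in\CI. \]
In case~(c), $M$ has a largest element $m_0$, so there is some $j(m_0,S_0)\in\CI$ with $S_0$ small and $S_0\sse A$; since $A$ is small (hence finite) in this case, $A\sm S_0$ is finite, and applying the rule $j(m,S)\Wedge k(\{i\})=j(m,S\cup\{i\})$ finitely many times gives
\[ j(m_0,A)=j(m_0,S_0)\Wedge\bigWedge_{i\in A\sm S_0}k(i)\in\CI. \]
(That $\CI$ is nonempty and that the required $t$- or $j$-element exists is automatic: $0\in\CI$ always, $Q\neq\emptyset$ in case~(d), and $M\neq\emptyset$ in case~(c).)

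For the domination claim, fix $u\in\CI$ and note $\tail(u)\sse A$. In case~(d), $\tht(\CI)=t(q_0,A)$: if $u=t(q,T)$ then $q\in Q$ forces $q\geq q_0$ and $T\sse A$, so $u\leq\tht(\CI)$; if $u=j(m,S)$ or $u=k(U)$ then $S\sse A$, resp.\ $U\sse A$, already gives $u\leq t(q_0,A)$. In case~(c) we have $Q=\emptyset$, so $u$ is of type $j$ or $k$: if $u=j(m,S)$ then $m\in M$ forces $m\leq m_0$ and $S\sse A$, so $u\leq\tht(\CI)$; if $u=k(U)$ then $U\sse A$ gives $u\leq j(m_0,A)$. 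Hence $\tht(\CI)$ is an upper bound for $\CI$ that lies in $\CI$, i.e.\ its largest element. The one step that needs attention is why only finitely many $k(i)$'s suffice in the membership argument: in case~(d) the presence of a $t$-element in $\CI$ forces $A$ to be cosmall, and in case~(c) $A$ is small by hypothesis; once this is noticed the argument is routine.
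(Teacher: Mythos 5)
Your proof is correct and follows essentially the same line as the paper's: in both cases you exhibit $\tht(\CI)$ as a finite meet of a $t$- or $j$-element already in $\CI$ with the elements $k(i)$, $i\in A$ finite, supplied by Lemma~\ref{lem-tht-k}, and then verify domination by direct inspection of the order. The only cosmetic difference is that you meet with $k(i)$ for $i\in A\sm T_0$ (resp.\ $i\in A\sm S_0$) rather than all of $A_0$ (resp.\ all of $A$), which produces the same element.
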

\begin{proof}
 We first consider case~(c), and put $m_0=\max(M)$.  By the definition
 of $M$, there is a small set $S_0$ such that $j(m_0,S_0)\in\CI$.  By
 the definition of $A$ we have $S_0\sse A$.  By assumption, the set
 $A$ is small, and therefore finite.  For each $i\in A$ we have
 $k(i)\in\CI$ by Lemma~\ref{lem-tht-k}, and so the element 
 \[ \tht(\CI) = j(m_0,A) = j(m_0,S_0) \Wedge \bigWedge_{i\in A}k(i) \]
 also lies in $\CI$.  Now consider an arbitrary element $u\in\CI$.  By
 assumption we have $Q=\emptyset$, so $u$ is either $j(m,S)$ (for some
 $m\in M$ and $S\sse A$) or $k(S)$ (for some $S\sse A$).  In all cases
 it is clear that $u\leq\tht(\CI)$, as required.

 Now consider case~(d), and put $q_0=\min(Q)$.  By the definition of
 $Q$, there is a cosmall set $T_0$ such that $t(q_0,T_0)\in\CI$.  By
 the definition of $A$ we have $T_0\sse A$, and $T_0$ is cosmall, so
 $A=T_0\amalg A_0$ for some finite set $A_0\subset\N$.  For $i\in A_0$
 we have $k(i)\in\CI$ by Lemma~\ref{lem-tht-k}, so the element
 \[ \tht(\CI) = t(q_0,A) = t(q_0,T_0)\Wedge\bigWedge_{i\in A_0}k(i) \]
 also lies in $\CI$.  Now consider an arbitrary element $u\in\CI$.  If
 $u\in\CA_j\amalg\CA_k$ then $u=j(m,S)$ or $u=k(S)$ for some
 $S\sse A$, and this gives $u\leq\tht(\CI)$ (independent of the value
 of $m$).  If $u\in\CA_t$ then $u=t(q,T)$ for some $q\in Q$ and
 $T\sse A$, and we must have $q\geq\min(Q)=q_0$, which again gives
 $u\leq\tht(\CI)$ as required.
\end{proof}

\begin{lemma}\lbl{lem-tht-a}
 In case~(a) of Definition~\ref{defn-tht}, the element
 $\tht(\CI)=k(A)$ is the least upper bound for $\CI$.
\end{lemma}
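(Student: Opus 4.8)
The plan is to verify the two halves of the least-upper-bound condition separately: first that $k(A)$ is an upper bound for $\CI$, and then that it lies below every other upper bound. Throughout I will use the explicit description of the order on $\CA$ recorded after Proposition~\ref{prop-osr}, the identity $\tail(v)=\{i\st k(i)\leq v\}$ from Remark~\ref{rem-tail}, and the reformulation $A=\{i\st k(i)\in\CI\}$ from Lemma~\ref{lem-tht-k}.

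For the upper bound claim I would take an arbitrary $u\in\CI$. Since we are in case~(a) we have $Q=\emptyset$, so $u\notin\CA_t$; thus $u=k(U)$ or $u=j(m,S)$, and in either case $\tail(u)\sse A$ by the definition of $A$. If $u=k(U)$ then $U\sse A$ gives $u\leq k(A)$ directly from the criterion that $k(U)\leq k(U')$ holds exactly when $U\sse U'$. If $u=j(m,S)$ then $j(m)\in\head(\CI)$, so $M$ is nonempty; the hypothesis of case~(a) then forces $A$ to be big, and since also $S\sse A$ the criterion ``$j(m,S)\leq k(U)$ iff $S\sse U$ and $U$ is big'' gives $u\leq k(A)$. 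Hence $k(A)$ is an upper bound for $\CI$.

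For minimality, let $v$ be any upper bound for $\CI$. By Lemma~\ref{lem-tht-k} we have $k(i)\in\CI$ for every $i\in A$, hence $k(i)\leq v$, and hence $i\in\tail(v)$ by Remark~\ref{rem-tail}; thus $A\sse\tail(v)$. Inspecting the three order criteria of the form $k(U)\leq(-)$ shows that $k(U)\leq v$ holds precisely when $U\sse\tail(v)$, regardless of the head of $v$; applying this with $U=A$ gives $k(A)\leq v$. Therefore $k(A)=\tht(\CI)$ is the least upper bound of $\CI$.

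I do not anticipate a genuine obstacle here: the only place where the specific hypotheses of case~(a) are used is the sub-case $u=j(m,S)$ in the upper-bound step, where one needs $A$ to be big, and this is exactly what case~(a) supplies once $M$ is seen to be nonempty. Everything else is a direct appeal to the order criteria together with Lemma~\ref{lem-tht-k} and Remark~\ref{rem-tail}.
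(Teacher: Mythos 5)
Your proof is correct and follows essentially the same route as the paper: both rest on Lemma~\ref{lem-tht-k} and the explicit order criteria, with the case~(a) hypothesis invoked exactly where a $j(m,S)\in\CI$ forces $A$ to be big. The only small difference is that the paper dispatches minimality by observing that $k(A)$ is already known to be the least upper bound of the subset $\CI\cap\CA_k$ (so any upper bound of $\CI$ dominates it automatically), whereas you verify $k(A)\leq v$ directly from the order criteria; both are fine, and you are in fact a touch more explicit than the paper about the sub-case where $M=\emptyset$.
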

\begin{proof}
 We see from Lemma~\ref{lem-tht-k} that the element $\tht(\CI)=k(A)$
 is the least upper bound for $\CI\cap\CA_k$, so we just need to check
 that it is an upper bound for all of $\CI$.  Consider an arbitrary
 element $u\in\CI$.  As $Q=\emptyset$ we must have $u=j(m,S)$ or
 $u=k(S)$ for some $S\sse A$.  As $A$ is big, it follows that
 $u\leq k(A)$ as required.
\end{proof}

\begin{lemma}\lbl{lem-tht-b}
 In case~(b) of Definition~\ref{defn-tht}, the set $M$ is infinite and
 contained in $\N$.  Moreover, we have $j(m,A)\in\CI$ for all $m\in M$, 
 and the element $\tht(\CI)=j(\om,A)$ is the least upper bound for
 $\CI$. 
\end{lemma}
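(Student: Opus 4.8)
\textit{Proof plan.} The plan is to establish the three assertions in order, working throughout from the explicit description of the order on $\CA$ given after Proposition~\ref{prop-osr}, together with the closure properties of ideals and Lemma~\ref{lem-tht-k}.

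First I would pin down $M$. It cannot contain $\infty$, the top element of $\Nw$, since that would be the largest element of $M$; and it cannot contain $\om$ either, for then $M$ would again have a largest element (namely $\infty$ if $\infty\in M$, and otherwise $\om$). Hence $M\sse\N$, and since a finite nonempty subset of $\N$ has a largest element, $M$ must be infinite. In particular $M$ is cofinal in $\N$, so its least upper bound in $\Nw$ is $\om$; I will use this below.

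Next, for the claim that $j(m,A)\in\CI$ when $m\in M$: by definition of $M$ there is a small set $S$ with $j(m,S)\in\CI$, and then $S=\tail(j(m,S))\sse A$. Since $A$ is small it is finite, and $k(i)\in\CI$ for each $i\in A$ by Lemma~\ref{lem-tht-k}. Using the rule $j(m,S)\Wedge k(\{i\})=j(m,S\cup\{i\})$ (valid since $\{i\}$ is small) over the finitely many $i\in A$, together with closure of $\CI$ under finite $\Wedge$, one obtains $j(m,S)\Wedge\bigWedge_{i\in A}k(i)=j(m,S\cup A)=j(m,A)\in\CI$.

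For the final statement, I would first check that $j(\om,A)$ is an upper bound for $\CI$: since $Q=\emptyset$, no element of $\CI$ lies in $\CA_t$, so each $u\in\CI$ has the form $j(m,S)$ with $m\in M$ or $k(S)$, and in either case $S=\tail(u)\sse A$; the order relations then give $u\leq j(\om,A)$, using $m\leq\om$ in the first case. For minimality, let $v$ be any upper bound and argue by cases on the form of $v$, using that $j(m,A)\in\CI$ for each $m\in M$. If $v=t(q,T)$ then $j(m,A)\leq v$ forces $A\sse T$, hence $j(\om,A)\leq v$. If $v=k(U)$ then $j(m,A)\leq v$ forces $A\sse U$ with $U$ big, hence $j(\om,A)\leq v$. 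If $v=j(n,S)$ then $j(m,A)\leq v$ for all $m\in M$ forces $A\sse S$ and $n\geq m$ for every $m\in M$; since $M$ is infinite this rules out $n\in\N$, so $n\geq\om$ and again $j(\om,A)\leq v$. I do not expect a real obstacle here, as the whole argument is bookkeeping against the order relations; the one point needing care is the chain ``no largest element $\Rightarrow$ $M$ infinite $\Rightarrow$ $\sup M=\om$'' and its use in the case $v=j(n,S)$, where a single element of $M$ would not suffice and the infinitude of $M$ must genuinely be exploited.
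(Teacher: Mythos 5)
Your proof is correct and follows essentially the same route as the paper's: pin down $M$ as an infinite subset of $\N$, use Lemma~\ref{lem-tht-k} and finiteness of $A$ to get $j(m,A)\in\CI$, then verify the least-upper-bound claim from the explicit order relations. You spell out the final "easy to check" step with a full case analysis on upper bounds, which the paper leaves to the reader.
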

\begin{proof}
 By assumption, $M$ is a nonempty subset of $\Nw$ with no largest
 element.  By inspection, this is only possible if $M$ is an infinite
 subset of $\N$.  Moreover, the set $A$ is small and therefore
 finite.  It follows using Lemma~\ref{lem-tht-k} that $k(A)\in\CI$.  
 If $m\in M$ then $j(m,S_m)\in\CI$ for some $S_m$, which must be a
 subset of $A$.  It follows that the element
 $j(m,A)=j(m,S_m)\Wedge k(A)$ also lies in $\CI$.  

 Now let $u$ be an arbitrary element of $\CI$.  As $Q=\emptyset$, we
 must have $u=j(m,S)$ for some $m\in M$ and $S\sse A$, or $u=k(S)$ for
 some $S\sse A$.  From this it is easy to check that $j(\om,A)$ is the
 least upper bound.
\end{proof}

\begin{proposition}\lbl{prop-cd}
 $\CA$ is completely distributive.
\end{proposition}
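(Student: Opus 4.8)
The plan is to reduce everything to ideals via Lemma~\ref{lem-ideal}, using the explicit formula for $\tht$. \emph{First (completeness):} the four cases of Definition~\ref{defn-tht} are exhaustive, and Lemmas~\ref{lem-tht-max}, \ref{lem-tht-a} and~\ref{lem-tht-b} together show that in every case $\tht(\CI)$ is the least upper bound of the ideal $\CI$. Hence every ideal in $\CA$ has a least upper bound, so Lemma~\ref{lem-ideal}(a) shows that $\CA$ is complete, with $\bigWedge\CI=\tht(\CI)$ for each ideal $\CI$.

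\emph{Second (reduction of complete distributivity):} fix $x\in\CA$ and an ideal $\CI$. The set $x\Smash\CI=\{x\Smash u\st u\in\CI\}$ is again an ideal: it contains $0=x\Smash 0$; it is closed under $\Wedge$ since $(x\Smash u)\Wedge(x\Smash v)=x\Smash(u\Wedge v)$ by distributivity; and $y\Smash(x\Smash u)=x\Smash(y\Smash u)$ with $y\Smash u\in\CI$, so it is closed under multiplication by arbitrary elements (this is the observation already used inside the proof of Lemma~\ref{lem-ideal}(b)). Therefore $\bigWedge(x\Smash\CI)=\tht(x\Smash\CI)$, and by Lemma~\ref{lem-ideal}(b) it is enough to prove the single identity
\[ x\Smash\tht(\CI) = \tht(x\Smash\CI) \qquad\text{for all } x\in\CA\text{ and all ideals }\CI\sse\CA. \]

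\emph{Third (tracking $\tht$ through multiplication):} attach to $\CI$ the data $(A,Q,M)$ as in Definition~\ref{defn-tht}, and let $(A',Q',M')$ be the data of the ideal $x\Smash\CI$. Using the $\Smash$ rules of Definition~\ref{defn-A}, or more compactly Remark~\ref{rem-j-tilde}, one checks directly that $A'=\tail(x)\cap A$, and that $Q'$ and $M'$ are governed by the head rules of Remark~\ref{rem-head-ops} applied with $\head(x)$: for $x=t(q_1,T_1)$ one gets $Q'=\{\max(q_1,q)\st q\in Q\}$ and $M'=\{m\in M\st m\geq q_1\}$; for $x=j(m_1,S_1)$ the only $j$-typed products are $t(q,T)\Smash j(m_1,S_1)$ with $q\leq m_1$, so $Q'=\emptyset$ and $M'\sse\{m_1\}$; and for $x=k(U_1)$ every product has head of type $k$, so $Q'=M'=\emptyset$. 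Feeding this into the four-way split of Definition~\ref{defn-tht}, the identity becomes a finite verification: three choices of $\head(x)$ against the four possibilities for $\CI$, comparing in each instance the case and value of $\tht(x\Smash\CI)$ with the value obtained by applying the relevant $\Smash$ formula of Definition~\ref{defn-A} to $\tht(\CI)$ directly.

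The main obstacle is exactly this case check, and in particular the bookkeeping around the small/big dichotomy and the element $\om$, since multiplication can move an ideal from one case of Definition~\ref{defn-tht} to another. Two representative points indicate why it works: if $x=t(q_1,T_1)$ then $\tail(x)=T_1$ is cosmall, and the intersection of a cosmall set with a big set is big, so $A'=T_1\cap A$ stays big whenever $A$ is big — this is what keeps case~(a) stable under multiplication by elements of $\CA_t$; and if $\CI$ is in case~(b) (so $M\sse\N$ is infinite with no maximum and $A$ is small) and $x=t(q_1,T_1)$, then $M'=\{m\in M\st m\geq q_1\}$ is again infinite with no maximum when $q_1<\infty$, so case~(b) persists, consistent with $t(q_1,T_1)\Smash j(\om,A)=j(\om,T_1\cap A)$ because $q_1\leq\om$; whereas $M'=\emptyset$ when $q_1=\infty$, so $x\Smash\CI$ falls into case~(a) with $\tht(x\Smash\CI)=k(T_1\cap A)$, matching $t(\infty,T_1)\Smash j(\om,A)=k(T_1\cap A)$ since $\infty>\om$. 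The remaining combinations go through in the same routine fashion, and (as for Proposition~\ref{prop-osr}) admit a machine cross-check.
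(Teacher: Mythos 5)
Your proposal is correct and follows essentially the same route as the paper: reduce to ideals via Lemma~\ref{lem-ideal}, use Lemmas~\ref{lem-tht-max}, \ref{lem-tht-a} and~\ref{lem-tht-b} to identify $\bigWedge\CI$ with $\tht(\CI)$, and then verify $x\Smash\tht(\CI)=\tht(x\Smash\CI)$ by comparing heads and tails across the four cases of Definition~\ref{defn-tht}. The paper organizes the case check slightly differently — it dispatches cases~(c) and~(d) in one stroke by noting that there $\tht(\CI)\in\CI$ so multiplication preserves the maximum, and it collapses all $x\in\CA_j\cup\CA_k$ at once since every product then has head $k$ — but the underlying computations are the same, and your explicit $(A',Q',M')$-tracking is if anything a bit cleaner on the boundary case $q_1=\infty$, which the paper's argument glosses over.
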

\begin{proof}
 We will use the criteria in Lemma~\ref{lem-ideal}.  Let $\CI\sse\CA$
 be an ideal.  Lemmas~\ref{lem-tht-max}, \ref{lem-tht-a}
 and~\ref{lem-tht-b} show that the element $a=\tht(\CI)$ is always a
 least upper bound for $\CI$.  It follows that $\CA$ is complete.

 Now consider an element $x\in\CA$, and put
 $\CI'=x\Smash\CI$ and $a'=\bigWedge\CI'$.
 It is clear that $x\Smash a$ is an upper bound for $\CI'$, so
 $a'\leq x\Smash a$, and we must show that this is an
 equality.  This is clear from Lemma~\ref{lem-tht-max} in cases~(c)
 and~(d) of Definition~\ref{defn-tht}, so we need only consider
 cases~(a) and~(b).

 In these cases we have $\CI'\sse\CI\sse\CA_j\cup\CA_k$, and also
 $a\in\CA_j\cup\CA_k$.  Note that  
 \[ \tail(a') = \bigcup\{\tail(u')\st u'\in\CI'\} 
     = \tail(x)\cap\tail(a) = \tail(x\Smash a),
 \]
 so we just need to worry about the head.  

 Now suppose that $x$ also lies in $\CA_j\cup\CA_k$.  From the
 definitions we have
 \[ (\CA_j\cup\CA_k)\Smash(\CA_j\cup\CA_k) = \CA_k, \]
 and it follows that $\head(a')=k=\head(x\Smash a)$ as required.

 Now suppose instead that $x=t(q,T)$. 

 In case~(a) we then have $x\Smash a=k(T\cap A)$, and $T\cap A$ is big
 (because $A$ is big and $T$ is cosmall).  Using Lemma~\ref{lem-tht-k}
 we see that $k(i)\in x\Smash\CI$ for all $i\in T\cap A$, and it
 follows that $a'\geq k(T\cap A)=x\Smash a$ as required. 

 Finally, consider case~(b) (still with $x=t(q,T)$).  Put
 $M'=\{m'\in M\st m'\geq q\}$.  Using Lemma~\ref{lem-tht-b} we see
 that $M'$ is an infinite subset of $\N$, and that $j(m',A)\in\CI$ for
 all $m'\in M'$.  In this context we have
 $x\Smash j(m',A)=j(m',A\cap T)$.  It follows that
 $a'\geq j(m',A\cap T)$ for all $m'\in M'$, and thus that
 $a'\geq j(\om,A\cap T)=x\Smash a$ as required.
\end{proof}

\begin{proposition}\lbl{prop-CAd}
 $\CA_{\text{latt}} = \CA_t\amalg\CA_k$.
\end{proposition}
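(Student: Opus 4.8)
The plan is to determine exactly which elements $x\in\CA$ satisfy $x\Smash x=x$, by working through the three families $\CA_t$, $\CA_j$, $\CA_k$ separately using the explicit multiplication table in Definition~\ref{defn-A} (or its streamlined form in Remark~\ref{rem-j-tilde}). For $x=t(q,T)$ we compute $t(q,T)\Smash t(q,T)=t(\max(q,q),T\cap T)=t(q,T)$, so every element of $\CA_t$ is idempotent. For $x=k(U)$ we compute $k(U)\Smash k(U)=k(U\cap U)=k(U)$, so every element of $\CA_k$ is idempotent. This already shows $\CA_t\amalg\CA_k\sse\CA_{\text{latt}}$. For the reverse inclusion, I would note that for $x=j(m,S)$ we have $j(m,S)\Smash j(m,S)=k(S\cap S)=k(S)$, and since $S$ is small (by the definition of the $j$-family), $k(S)$ is a genuinely different element from $j(m,S)$ — here I would invoke the distinctness of the named elements, which follows from Remark~\ref{rem-j-tilde} together with the fact that $\head$ distinguishes $j(m)$ from $k$. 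Hence no element of $\CA_j$ is idempotent, giving $\CA_{\text{latt}}\sse\CA_t\amalg\CA_k$ and completing the proof.

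The only point requiring any care is the claim that $j(m,S)\neq k(S)$ for small $S$. This is where one must be slightly careful about the bookkeeping of which symbols denote distinct elements of the abstract set $\CA$: the three families are declared to be disjoint ($\CA=\CA_t\amalg\CA_j\amalg\CA_k$), and within $\CA_j$ the elements $j(m,S)$ are indexed by pairs $(m,S)$ with $S$ small, so $j(m,S)$ is never equal to any $k(U)$. Thus $j(m,S)\Smash j(m,S)=k(S)\neq j(m,S)$, with no exceptional cases, since a small set is in particular a subset of $\Ni$ and the equation $k(S)=j(m,S)$ would require $k(S)\in\CA_j\cap\CA_k=\emptyset$.

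I expect no real obstacle here; the whole proof is a three-line case check reading off the $\Smash$-diagonal from the table, plus the observation about disjointness of the families. The proof I would write is essentially:

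\begin{proof}
 From the multiplication table we read off $t(q,T)\Smash t(q,T)=t(\max(q,q),T\cap T)=t(q,T)$ and $k(U)\Smash k(U)=k(U\cap U)=k(U)$, so $\CA_t\amalg\CA_k\sse\CA_{\text{latt}}$. Conversely, for $x=j(m,S)\in\CA_j$ we have $x\Smash x=k(S\cap S)=k(S)$. Since $S$ is small, $k(S)$ lies in $\CA_k$, which is disjoint from $\CA_j$, so $x\Smash x=k(S)\neq j(m,S)=x$. Hence no element of $\CA_j$ is idempotent, and $\CA_{\text{latt}}=\CA_t\amalg\CA_k$.
\end{proof}
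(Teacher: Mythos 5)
Your proof is correct and is exactly the paper's approach: the paper's entire proof is ``Just inspect the definitions to see which elements satisfy $x\Smash x=x$,'' and you have simply carried out that inspection explicitly, including the (correct) observation that $j(m,S)\Smash j(m,S)=k(S)\neq j(m,S)$ because $\CA_j$ and $\CA_k$ are disjoint by construction.
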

\begin{proof}
 Just inspect the definitions to see which elements satisfy
 $x\Smash x=x$.
\end{proof}

\begin{proposition}\lbl{prop-CAb}
 We have
 \[ \CA_{\text{bool}} = 
     \{t(0,T)\st T \text{ is cosmall }\} \amalg 
     \{k(U) \st U\text{ is small }\},
 \]
 with $\neg t(0,T)=k(\Ni\sm T)$ and $\neg k(U)=t(0,\Ni\sm U)$. 
\end{proposition}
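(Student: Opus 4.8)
The plan is to reduce the problem to the sets $\CA_t$ and $\CA_k$ using the abstract machinery already in place, and then to verify everything by reading off the multiplication tables of Definition~\ref{defn-A}. First I would invoke Proposition~\ref{prop-bool} together with Proposition~\ref{prop-CAd}: since $\CA_{\text{bool}}$ is a subsemiring of $\CA_{\text{latt}}=\CA_t\amalg\CA_k$, no element of $\CA_j$ can be complemented. Moreover, if $x$ is complemented then its complement $y$ is itself complemented (with complement $x$), so $y\in\CA_t\amalg\CA_k$ as well. Thus it suffices to decide which $t$- and $k$-elements are complemented and to identify their complements, and throughout one may assume the complement is of $t$- or $k$-type.

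Next I would establish the easy inclusion by exhibiting complements. Given a cosmall set $T$, the set $\Ni\sm T$ is small, so $k(\Ni\sm T)\in\CA$, and the definitions give $t(0,T)\Wedge k(\Ni\sm T)=t(0,T\cup(\Ni\sm T))=t(0,\Ni)=1$ together with $t(0,T)\Smash k(\Ni\sm T)=k(T\cap(\Ni\sm T))=k(\emptyset)=0$. By Lemma~\ref{lem-complement} this shows $t(0,T)\in\CA_{\text{bool}}$ with $\neg t(0,T)=k(\Ni\sm T)$. The assertion for $k(U)$ with $U$ small, and the formula $\neg k(U)=t(0,\Ni\sm U)$, is the mirror image of this computation.

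For the remaining inclusion I would take a complemented $x\in\CA_t\amalg\CA_k$ with complement $y\in\CA_t\amalg\CA_k$ and extract constraints from the equations $x\Wedge y=t(0,\Ni)$ and $x\Smash y=k(\emptyset)$. If $x=t(q,T)$, then since the smash of two $t$-elements is again a $t$-element (hence never equal to $0=k(\emptyset)$), $y$ must have the form $k(U')$; now $t(q,T)\Wedge k(U')=t(q,T\cup U')$ equals $t(0,\Ni)$ only if $q=0$, and $t(q,T)\Smash k(U')=k(T\cap U')$ equals $k(\emptyset)$ only if $U'=\Ni\sm T$. Symmetrically, if $x=k(U)$, then $y$ cannot be a $k$-element (the wedge of two $k$-elements is a $k$-element, never $t(0,\Ni)$), so $y=t(q,T)$, and the same two equations force $q=0$ and $T=\Ni\sm U$; this in turn forces $U$ to be small, since $t(0,T)$ is only a legitimate element of $\CA$ when $T$ is cosmall. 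Combining the two inclusions yields the stated description of $\CA_{\text{bool}}$, and the negation formulas follow from the uniqueness of complements (Lemma~\ref{lem-complement}).

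Each individual step is a one-line inspection of Definition~\ref{defn-A}, so there is no serious obstacle; the only point requiring any care is to perform the reduction to $\CA_t\amalg\CA_k$ at the very start, so that a $j$-type element never has to be considered as a candidate for either the complemented element or its complement.
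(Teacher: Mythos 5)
Your proof is correct, and it follows the same basic template as the paper: exhibit the complements $t(0,T)\leftrightarrow k(\Ni\sm T)$ by direct computation from Definition~\ref{defn-A}, then for the converse extract constraints from $x\Wedge y=t(0,\Ni)$ and $x\Smash y=k(\emptyset)$. The only substantive difference is your opening reduction: you invoke Proposition~\ref{prop-bool} and Proposition~\ref{prop-CAd} to restrict attention to $\CA_t\amalg\CA_k$ before touching the multiplication table, whereas the paper deduces directly from the equation $x\Wedge y=t(0,\Ni)$ that one of $x,y$ must be a $t(0,T)$ (this rules out $j$-heads at the same stroke, since no wedge involving a $j$ or $k$ head can have head $t(0)$). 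Your version front-loads the abstract machinery and makes the case analysis slightly shorter; the paper's is marginally more self-contained. One small imprecision worth tightening: the smash condition $k(T\cap U')=k(\emptyset)$ by itself only gives $T\cap U'=\emptyset$; it is the conjunction with $T\cup U'=\Ni$ (from the wedge condition, which you invoked only for $q=0$) that pins down $U'=\Ni\sm T$.
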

\begin{proof}
 Inspection of the definitions shows that when $U\sse\Ni$ is small, we
 have $t(0,\Ni\sm U)\Wedge k(U)=t(0,\Ni)=1$ and
 $t(0,\Ni\sm U)\Smash k(U)=k(\emptyset)=0$.  Thus, the claimed
 elements all lie in $\CA_{\text{bool}}$.  Conversely, suppose that
 $x$ and $y$ are complementary elements of $\CA_{\text{bool}}$.  We
 must then have $x\Wedge y=1=t(0,\Ni)$.  Inspection of the definitions
 shows that this is only possible if one of $x$ and $y$ has the form
 $t(0,T)$ for some cosmall $T$; we may assume without loss that
 $x=t(0,T)$.  We must also have $t(0,T)\Smash y=0$, and this is only
 possible if $y=k(U)$ with $U\cap T=\emptyset$.  The condition
 $x\Wedge y=1$ now reduces to $T\cup U=\Ni$, so we must have
 $U=\Ni\sm U$.
\end{proof}

\begin{remark}\lbl{rem-heyting-A}
 It is also possible to tabulate all the Heyting elements $(x\to y)$
 for $x,y\in\CA$, and to determine which of them are strong.  Strong
 Heyting elements in $\CA$ will give strong Heyting elements in
 $\bCL$, but the same cannot be guaranteed for weak Heyting elements.
 The complete tabulation involves a rather long list of cases, so we
 will not give all the details here.
\end{remark}

\section{Basic Bousfield classes}
 
We now introduce notation for various spectra, and the corresponding
Bousfield classes.  The names that we will use for some of these
classes are the same as the names of elements of $\CA$.  Later we will
consider the map $\phi\:\CA\to\CL$ that sends each element of $\CA$ to
the element of $\CL$ with the same name.

\begin{definition}\lbl{defn-basic}\leavevmode
 \begin{itemize}
  \item For $n\in\N$ we let $K(n)$ denote the $n$'th Morava
   $K$-theory~\cite{jowi:bpo}.  In particular, $K(0)$ is the rational
   Eilenberg-MacLane spectrum $H\Q$.  We also write $K(\infty)$ for
   the mod $p$ Eilenberg-MacLane spectrum, and $k(n)=\bc{K(n)}$.
  \item For any subset $U\sse\Ni$ we put $K(U)=\bigWedge_{i\in U}K(i)$
   and $k(U)=\bc{K(U)}$.
  \item It is a theorem of Mitchell~\cite{mi:fca} that for each
   $n\in\N$ we can choose a ($p$-locally) finite spectrum $U(n)$ of type
   $n$, meaning that $K(i)_*U(n)=0$ iff $i<n$.  We choose $U(0)$ to be
   $S$ and $U(1)$ to be the Moore spectrum $S/p$.  We put
   $F(n)=F(U(n),U(n))$, which is a self-dual finite ring spectrum of
   type $n$.  Note that $F(0)=S^0$.  In all cases we put
   $f(n)=\bc{F(n)}$.  As a well-known consequence of the Thick
   Subcategory Theorem~\cite[Theorem 7]{hosm:nshii}, these 
   Bousfield classes do not depend on the choice of $U(n)$.
  \item For $q\in\N$ we recall that the Bott periodicity isomorphism
   $\Om SU=BU$ gives a natural virtual vector bundle over $\Om SU(p^q)$,
   and the associated Thom spectrum $X(p^q)$ has a natural ring
   structure.  The $p$-localisation of this has a $p$-typical summand
   called $T(q)$ (see~\cite[Section 6.5]{ra:ccs}).  We will also take
   $T(\infty)=BP$.  Note that $T(0)$ is just the ($p$-local) sphere
   spectrum $S$.  In all cases we put $t(q)=\bc{T(q)}$ and
   $t(q;n)=t(q)\Smash f(n)$.  
  \item Now suppose we have $q\in\N$ and a cosmall set $T\sse\Ni$.
   For any $n$ such that $[n,\infty]\sse T$, we define
   $t(q,T;n)=t(q;n)\Wedge k(T)$.  We also define $t(q,T)=t(q,T;n_0)$,
   where $n_0$ is the smallest integer such that $[n_0,\infty]\sse T$.
  \item For $m\in\Ni$ we let $J(m)$ denote the Brown-Comenetz
   dual of $T(m)$, so there is a natural isomorphism
   \[ [X,J(m)] \simeq \Hom(\pi_0(T(m)\Smash X),\QZpl) \]
   for all spectra $X$.  We also put $J(\om)=\bigWedge_{m\in\N}J(m)$,
   and $j(m)=\bc{J(m)}$ for all $m\in\Nw$.  Given a small
   set $S$, we put $j(m,S)=j(m)\Wedge k(S)$.
  \item For $n\in\N$ we choose a good $v_n$ self-map
   $w_n$ of $U(n)$.  (Here we use Definition~4.5 from~\cite{host:mkl},
   which is a slight modification of definitions used in
   \cite{hosm:nshii,de:srs}.  This means that
   $w_n\Smash 1=1\Smash w_n$ as endomorphisms of $U(n)\Smash U(n)$,
   and that $1_{BP}\Smash w_n=v_n^{p^{d_n}}\Smash 1_{U(n)}$ as
   endomorphisms of $BP\Smash U(n)$ for some $d_n\geq 0$.)  We also
   write $w_n$ for the corresponding element of $\pi_*(F(n))$, and we
   put $K'(n)=F(n)[w_n^{-1}]$ and $k'(n)=\bc{K'(n)}$.
  \item Now fix $n\in\N$.  Let $L_n$ denote the Bousfield localisation
   functor with respect to the Johnson-Wilson spectrum $E(n)$, and let
   $C_nX$ denote the fibre of the natural map $X\to L_nX$.  We also put
   $A(n)=C_nK'(n)$ and $a(n)=\bc{A(n)}$.  Note here that the Smash Product
   Theorem~\cite[Theorem 7.5.6]{ra:nps} gives
   $A(n)=K'(n)\Smash C_nS$.  We also put $\ep(n)=\bigWedge_{i<n}a(i)$
   for all $n\in\Ni$.
 \end{itemize}
\end{definition}

\begin{remark}\lbl{rem-tc}
 The original formulation of Ravenel's Telescope
 Conjecture~\cite[Conjecture 10.5]{ra:lrc} says that $k'(n)=k(n)$ for
 all $n\in\N$.  It is shown in~\cite[Section 1.3]{marash:tls} that
 this is equivalent to the claim that $K'(n)=L_nK'(n)$, which is in
 turn equivalent to $a(n)=0$.  These equivalences can also be obtained
 from Lemma~\ref{lem-at} below.  The formulation $a(n)=0$ is also used
 in~\cite{ho:blf,hopa:sbl}.  We can reformulate it again as
 $\ep(n)=0$ for all $n\in\Ni$, or as $\ep(\infty)=0$.
\end{remark}

\begin{remark}\lbl{rem-notation}
 We offer some translations between our notation and that used by some
 other authors.
 \begin{itemize}
  \item[(a)] In~\cite[Section 3]{ra:lrc}, Ravenel uses the notation
   $X_n$ for what we have called $X(p^n)$.  He only mentions $T(n)$ in
   passing, but he calls it $T_n$.  In~\cite{ra:ccs}
   and~\cite{ra:nps}, however, Ravenel uses the same notation as we do
   here.
  \item[(b)] We have used the symbol $k(n)$ for the Bousfield class of
   the spectrum $K(n)$, with homotopy ring $\Z/p[v_n,v_n^{-1}]$.
   However, many other sources use the symbol $k(n)$ for a certain
   spectrum with homotopy ring $\Z/p[v_n]$, whose Bousfield class is
   different from that of $K(n)$.  We will instead use the notation
   $BP\ip{n}/I_n$ for this spectrum.
  \item[(c)] Our finite spectra $U(n)$ and $F(n)$ have type $n$, and
   they have the same Bousfield class as any other finite spectrum of
   type $n$.  In particular, this applies to the Toda-Smith spectra
   when they exist.  The Toda-Smith spectrum of type $n$ is
   traditionally denoted $V(n-1)$, but we will call it $S/I_n$. 
  \item[(d)] Our class $k'(n)$ is often denoted $\text{Tel}(n)$ or
   $T(n)$.  Our notation is chosen to reflect the fact that
   $k'(n)=k(n)$ modulo the telescope conjecture.
 \end{itemize}
\end{remark}

\begin{remark}\lbl{rem-y}
 The paper~\cite{marash:tls} is an incomplete attempt to disprove TC.
 It involves spectra called $y(n)$ and $Y(n)$, which we will not
 define here.  In Section~3 of that paper, the authors say (in our
 notation) that $y(n)$ might be the same as $T(n)\Smash S/I_n$ in
 cases where $S/I_n$ exists, and some of their calculations provide
 evidence for that possibility.  As a closely related possibility, it
 might be that $\bc{y(n)}=t(n)\Smash f(n)$ as Bousfield classes for
 all $n$.  This would give $\bc{Y(n)}=t(n)\Smash k'(n)$.  If the
 strategy in~\cite{marash:tls} could be completed, it would show that
 $A(n)\Smash y(n)\neq 0$ for all $n>1$.  If we also knew that
 $\bc{y(n)}=t(n)\Smash f(n)$, we could conclude that
 $t(n)\Smash a(n)\neq 0$ for $n>1$.  On the other hand, it is known
 $t(i)<t(j)$ whenever $i>j$, and that $t(\infty)\Smash a(n)=0$.  One
 would thus want to ask whether $t(n+1)\Smash a(n)$ is zero or not.
\end{remark}

\begin{definition}\lbl{defn-phi}
 We define $\phi\:\CA\to\CL$ to be the map that sends each element of
 $\CA$ to the element of $\CL$ with the same name.
\end{definition}

\begin{definition}\lbl{defn-bCL}
 Later we will prove that $\ep(n)$ is idempotent for all $n$.
 Assuming this for the moment, we can define
 \[ \bCL=\colim_{n<\infty}\CL/\ep(n). \]
 We write $\pi$ for the canonical quotient map $\CL\to\bCL$, and we
 put $\bphi=\pi\phi\:\CA\to\bCL$.
\end{definition}

We will need some properties of the spectra $T(q)$.

\begin{lemma}\lbl{lem-tq-finite}
 The spectrum $T(q)$ is $(-1)$-connected, and each homotopy group is
 finitely generated over $\Zpl$.
\end{lemma}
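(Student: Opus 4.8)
The plan is to exploit the construction recalled in Definition~\ref{defn-basic}: $T(q)$ is a retract of the $p$-localisation of the Thom spectrum $X(p^q)$, and $X(p^q)$ is itself the Thom spectrum of the map $\Omega SU(p^q)\to\Omega SU\simeq BU$. Since this map classifies a rank-zero virtual bundle over a connected space, $X(p^q)$ is $(-1)$-connected, and the Thom isomorphism identifies $H_*(X(p^q);\Z)$ with $H_*(\Omega SU(p^q);\Z)=\Z[b_1,\dots,b_{p^q-1}]$, a polynomial ring on generators in even positive degrees. Hence $H_n(X(p^q);\Z)$ is a finitely generated free abelian group for every $n$, and vanishes for $n<0$. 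The degenerate cases are covered by the same discussion: $T(0)=X(1)=S_{(p)}$ (as $SU(1)$ is trivial), and $T(\infty)$ is the retract $BP$ of $MU_{(p)}=X(\infty)_{(p)}$, with $\Omega SU(\infty)=\Omega SU$ and $H_*(\Omega SU;\Z)=\Z[b_1,b_2,\dots]$ still of finite type.

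The first step is to record the standard fact that a $(-1)$-connected spectrum $Y$ whose integral homology is finitely generated in each degree has finitely generated homotopy groups in each degree. In the Atiyah--Hirzebruch spectral sequence $E^2_{s,t}=H_s(Y;\pi_t S)\Rightarrow\pi_{s+t}(Y)$, both $Y$ and the sphere are connective, so $E^2_{s,t}=0$ unless $s,t\geq 0$; each group $E^2_{s,t}$ is finitely generated over $\Z$, since $H_s(Y;\Z)$ is finitely generated and $\pi_t S$ is finitely generated by Serre's theorem (indeed finite for $t>0$); for fixed $n$ there are only finitely many pairs $(s,t)$ with $s+t=n$ and $s,t\geq 0$; and $\Z$ is Noetherian, so the subquotients $E^\infty_{s,t}$, and then the finitely filtered group $\pi_n(Y)$, are finitely generated. (Those who prefer to avoid the spectral sequence may instead build a $(-1)$-connected CW model for $X(p^q)$ with finitely many cells in each dimension and run the analogous Postnikov-tower induction.)

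Applying this to $Y=X(p^q)$ shows that $X(p^q)$ is $(-1)$-connected with $\pi_n(X(p^q))$ finitely generated over $\Z$ for all $n$. Then $\pi_*(X(p^q)_{(p)})=\pi_*(X(p^q))\otimes\Zpl$ is finitely generated over $\Zpl$ in each degree, and $X(p^q)_{(p)}$ is still $(-1)$-connected. Finally, $T(q)$ is a retract of $X(p^q)_{(p)}$, so $T(q)$ is $(-1)$-connected, and $\pi_n(T(q))$ is a direct summand of a finitely generated $\Zpl$-module; since $\Zpl$ is Noetherian this summand is again finitely generated, which is exactly the assertion of the lemma.

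The only genuine point requiring care is the bookkeeping around the construction — correctly identifying $X(p^q)$ as a Thom spectrum over a connected space of finite type with the stated homology, and confirming it is $(-1)$-connected — together with the routine algebraic inputs that finite generation over a Noetherian ring is preserved under subquotients, extensions, localisation, and passage to retracts. Everything else is formal, so I do not anticipate a substantial obstacle.
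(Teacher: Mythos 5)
Your argument follows the paper's proof essentially verbatim: both identify $X(p^q)$ as a $(-1)$-connected Thom spectrum of a virtual-dimension-zero bundle with finite-type polynomial integral homology, run the Atiyah--Hirzebruch spectral sequence to conclude $\pi_*(X(p^q))$ is degreewise finitely generated over $\Z$, then $p$-localise and pass to the retract $T(q)$. The only tiny discrepancy is the index of the top polynomial generator (the paper writes $b_i$ for $0<i\leq p^q$, you stop at $b_{p^q-1}$), which is immaterial to the finiteness argument.
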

\begin{proof}
 As $X(p^q)$ is the Thom spectrum of a virtual bundle of virtual
 dimension zero, it is certainly $(-1)$-connected.  It is a standard
 calculation that
 \[ H_*(X(p^q))=\Z[b_i\st 0<i\leq p^q], \]
 with $|b_i|=2i$.  Using this and the Atiyah-Hirzebruch spectral
 sequence 
 \[ H_i(X(p^q);\pi_j(S)) \convto \pi_{i+j}(X(p^q)), \]
 we see that the homotopy groups of $X(p^q)$ are finitely generated
 over $\Z$.  As $T(q)$ is a summand in $X(p^q)_{(p)}$, we deduce that
 it is $(-1)$-connected, with homotopy groups that are finitely
 generated over $\Zpl$. 
\end{proof}

\begin{lemma}\lbl{lem-algebroid}
 For $q\geq r$ we have
 \[ T(q)_*T(r) = T(q)_*[t_1,\dotsc,t_r] \]
 (with $|t_i|=2(p^i-1)$).
\end{lemma}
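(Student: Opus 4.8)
The plan is to compute $T(q)_*T(r)$ by running the skeletal Atiyah--Hirzebruch spectral sequence of the ring spectrum $T(r)$ with coefficients in $T(q)$, using the hypothesis $q\geq r$ to force the relevant differentials to vanish via a connectivity comparison with $BP\Smash BP$. First I would collect the standard facts about these Thom spectra from~\cite[Section 6.5]{ra:ccs}: $T(r)$ is a connective ring spectrum with $H_*(T(r);\Zpl)=\Zpl[t_1,\dotsc,t_r]$, $|t_i|=2(p^i-1)$, a free $\Zpl$-module concentrated in even degrees; there is a tower of ring maps $S=T(0)\to T(1)\to\dotsb\to T(\infty)=BP$ which on $\Zpl$-homology is the chain of inclusions $\Zpl[t_1,\dotsc,t_q]\hookrightarrow\Zpl[t_1,t_2,\dotsc]$ of polynomial subalgebras, so that $T(q)\to BP$ is an $H_*(-;\Zpl)$-isomorphism through degree $2p^{q+1}-3$ (the first discrepancy being $t_{q+1}$) and hence a highly connected map of connective spectra, and likewise $T(r)\to BP$; and $BP_*BP=BP_*[t_1,t_2,\dotsc]$. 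Since $T(r)$ is a ring spectrum the spectral sequence $E^2_{s,t}=H_s(T(r);T(q)_t)\convto T(q)_{s+t}T(r)$ is multiplicative, and the $\Zpl$-freeness of $H_*(T(r);\Zpl)$ kills the Tor term, so as a bigraded ring $E^2_{*,*}=T(q)_*[t_1,\dotsc,t_r]$, with the $t_i$ in bidegree $(2(p^i-1),0)$ and $E^2_{0,*}=T(q)_*$.

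The main step is to show the classes $t_i\in E^2_{2(p^i-1),0}$ $(1\leq i\leq r)$ are permanent cycles. Smashing the highly connected maps $T(q)\to BP$ and $T(r)\to BP$ produces a map $T(q)\Smash T(r)\to BP\Smash BP$ which, because $q\geq r$, is an isomorphism on $\pi_*$ in a range of degrees containing all the $2(p^i-1)$ with $i\leq r$; hence the standard generators $t_i\in\pi_{2(p^i-1)}(BP\Smash BP)$ lift (uniquely) to classes $\tilde t_i\in\pi_{2(p^i-1)}(T(q)\Smash T(r))=T(q)_{2(p^i-1)}T(r)$. I would then compare the two Atiyah--Hirzebruch spectral sequences along the cellular map $T(r)\to BP$: in $BP_*BP$ (whose spectral sequence collapses, $BP$ being complex orientable) the class $t_i$ has filtration exactly $2(p^i-1)$ with leading term the polynomial generator of $H_{2(p^i-1)}(BP;\Zpl)$ modulo decomposables, and on this $E^\infty$-term the comparison map is induced by the injection $H_{2(p^i-1)}(T(r);\Zpl)\hookrightarrow H_{2(p^i-1)}(BP;\Zpl)$. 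As the comparison is filtration preserving, $\tilde t_i$ must also have filtration exactly $2(p^i-1)$ and leading term equal to $t_i$ modulo decomposables, which is again a polynomial generator of $H_*(T(r);\Zpl)$. So each $\tilde t_i$ survives and detects a polynomial generator of $E^2$; after a triangular change of generators, the $t_i$ are permanent cycles.

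Finishing is then routine. Since $E^2_{*,*}$ is generated as a ring by the permanent cycles $t_1,\dotsc,t_r$ and $E^2_{0,*}=T(q)_*$, the Leibniz rule forces all differentials to vanish, so the spectral sequence collapses and the associated graded of $T(q)_*T(r)$ is the free $T(q)_*$-module on the monomials $t_1^{a_1}\cdots t_r^{a_r}$. Lifting each such monomial to the product $\tilde t_1^{a_1}\cdots\tilde t_r^{a_r}$ in the ring $T(q)_*T(r)$ gives a filtration-preserving map of $T(q)_*$-modules $T(q)_*[t_1,\dotsc,t_r]\to T(q)_*T(r)$ that induces the identity on associated graded; since $T(q)$ and $T(r)$ are connective, the skeletal filtration is finite in each degree, so this map is an isomorphism. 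Renaming $\tilde t_i$ as $t_i$ yields $T(q)_*T(r)=T(q)_*[t_1,\dotsc,t_r]$ with $|t_i|=2(p^i-1)$.

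The one point requiring care is the key step: the $\tilde t_i$ exist only by a range argument, and one must certify that they really detect polynomial generators (rather than dropping to higher filtration or becoming decomposable), which is precisely where $q\geq r$ enters. A more geometric way to obtain the same input: when $q\geq r$, the virtual bundle over $\Om SU(p^r)$ whose Thom spectrum is $X(p^r)$ is pulled back from the tautological bundle over $\Om SU(p^q)$ along the loop of the inclusion $SU(p^r)\hookrightarrow SU(p^q)$, hence is $X(p^q)$-orientable (the identity of $X(p^q)$ serving as a Thom class), so $X(p^q)\Smash X(p^r)\simeq X(p^q)\Smash(\Om SU(p^r))_+$ as $X(p^q)$-modules; passing to $p$-typical summands and invoking the same collapse argument again gives the conclusion.
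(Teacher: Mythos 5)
Your main argument is essentially the paper's: set up the Atiyah--Hirzebruch spectral sequence $H_*(T(r);T(q)_*)\Rightarrow T(q)_*T(r)$, use the high connectivity of the maps into $BP$ (which holds precisely because $q\geq r$) to lift the $t_i$ as permanent cycles, and conclude collapse. The only structural difference is minor: the paper smashes $i_q$ with $1_{T(r)}$ to get $T(q)\Smash T(r)\to BP\Smash T(r)$, whose connectivity is controlled solely by $|t_{q+1}|-1>|t_r|$, whereas you map all the way to $BP\Smash BP$, dropping the connectivity to $\min(|t_{q+1}|,|t_{r+1}|)-1=|t_{r+1}|-1$; this still comfortably exceeds $|t_r|$, so your version works, just with a slightly smaller margin. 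You are more explicit than the paper about why the lifted classes really sit in filtration $2(p^i-1)$ and detect polynomial generators (the paper compresses this to ``$\alpha$ provides enough permanent cycles''), which is a reasonable expansion rather than a deviation. The closing geometric sketch --- that for $q\geq r$ the bundle over $\Omega SU(p^r)$ is pulled back from $\Omega SU(p^q)$, giving an $X(p^q)$-orientation and hence a Thom isomorphism $X(p^q)\Smash X(p^r)\simeq X(p^q)\Smash(\Omega SU(p^r))_+$ --- is a genuinely different route not in the paper; it would still need a clean statement of how the $p$-typical idempotent interacts with this equivalence, but it is a nice alternative that makes the role of $q\geq r$ visibly geometric rather than a connectivity count.
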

The literature contains various similar and closely related results,
but we have not been able to find this precise version.
\begin{proof}
 By construction~\cite[Section 6.5]{ra:ccs}, there is a map
 $i_q\:T(q)\to BP$ which induces an isomorphism from $BP_*T(q)$ to
 the subring $BP_*[t_1,\dotsc,t_q]$ of the ring
 $BP_*BP=BP_*[t_i\st i>0]$.  This implies that the connectivity of the
 map $i_q$ is $|t_{q+1}|-1$, which is strictly greater than $|t_r|$.
 The connectivity of the map 
 \[ i_q\Smash 1\:T(q)\Smash T(r) \to BP\Smash T(r) \]
 is at least as large as that of $i_q$, so the elements
 $t_i\in BP_*T(r)$ have unique preimages in $T(q)_*T(r)$, which
 we also denote by $t_i$.  These give us a map 
 \[ \al \: T(q)_*[t_1,\dotsc,t_r] \to T(q)_*T(r). \]
 From the description of $BP_*T(r)$ it follows easily that
 $H_*(T(r))=\Zpl[t_1,\dotsc,t_r]$, so we have an Atiyah-Hirzebruch
 spectral sequence
 \[ H_*(T(r);T(q)_*) = 
     T(q)_*[t_1,\dotsc,t_r] \convto T(q)_*T(r).
 \]
 The map $\al$ provides enough permanent cycles to show that the
 spectral sequence collapses, and it follows that $\al$ is an
 isomorphism. 
\end{proof}

\begin{lemma}\lbl{lem-tower}
 If $m\leq m'\leq\infty$, then $T(m)$ can be expressed as the homotopy
 inverse limit of a tower of spectra $Q(r)$, where the fibre of the
 map $Q(r+1)\to Q(r)$ is a product of suspended copies of $T(m')$, and
 $Q(r)=0$ for $r<0$.
\end{lemma}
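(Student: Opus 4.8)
The plan is to build the tower $Q(r)$ by iterated application of Lemma~\ref{lem-algebroid}, using the multiplicative structure of $T(m')$ together with the fact that $T(m')_*T(m)$ is a free module on monomials in $t_1,\dots,t_m$. Since $m\le m'$, Lemma~\ref{lem-algebroid} (with the roles of $q$ and $r$ played by $m'$ and $m$) gives $T(m')_*T(m)=T(m')_*[t_1,\dots,t_m]$, a polynomial ring, hence a free $T(m')_*$-module with basis the monomials $t^\alpha$. Filtering by (total or weighted) degree, let $Q(r)$ be the cofibre of the map realizing the submodule of basis elements of degree $>$ (some threshold depending on $r$) — concretely, I would run the usual Adams/algebraic resolution of $T(m)$ as a $T(m')$-module: because $T(m')_*T(m)$ is free, the $T(m')$-based Adams spectral sequence collapses, and one gets a tower whose associated graded is a wedge of suspensions of $T(m')$.

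More precisely, here are the steps I would carry out. First, treat the case $m'=\infty$, i.e.\ $T(m')=BP$: this is essentially classical (the $BP$-based Adams resolution, or the ``Postnikov-type'' tower of $T(m)$ relative to $BP$), using $BP_*T(m)=BP_*[t_1,\dots,t_m]$ and the fact that $BP$ is a commutative ring spectrum so one can form $BP\Smash T(m)$, $BP\Smash BP\Smash T(m)$, etc. Second, for finite $m'$, replace $BP$ by $T(m')$ throughout; the only thing one needs is that $T(m')$ is a (homotopy-)ring spectrum — which is recalled in Definition~\ref{defn-basic} — so that the cobar-type resolution
\[ T(m) \to T(m')\Smash T(m) \to T(m')\Smash T(m')\Smash T(m) \to \dotsb \]
makes sense, and that $T(m')_*T(m)$ is free over $T(m')_*$ by Lemma~\ref{lem-algebroid}. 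Third, since each term $T(m')^{\Smash s}\Smash T(m)$ has homotopy that is free over $T(m')_*$ (iterate Lemma~\ref{lem-algebroid}: $T(m')_*(T(m')^{\Smash s}\Smash T(m))$ is a polynomial ring over $T(m')_*$), each term splits as a product of suspensions of $T(m')$; taking partial totalizations $Q(r)$ of this cosimplicial object gives the desired tower, with the fibre of $Q(r+1)\to Q(r)$ a product of suspended copies of $T(m')$ (the ``$r$-th layer'' of the resolution). Fourth, convergence: because $T(m)$ is connective with finitely generated homotopy groups (Lemma~\ref{lem-tq-finite}) and the filtration is by an increasing connectivity bound — the $s$-th term of the resolution is increasingly highly connected since the relevant polynomial generators $t_i$ have positive degree — the tower converges, $T(m)\simeq\operatorname{holim}_r Q(r)$, and $Q(r)=0$ for $r<0$ by construction.

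The main obstacle I expect is \emph{convergence and connectivity bookkeeping} for the finite-$m'$ case: unlike the $BP$ case, one does not have an abstract nilpotence/completeness theorem handed to us, so one must check directly that the successive layers of the $T(m')$-resolution of $T(m)$ become more and more highly connected, which requires knowing the connectivity of the unit map $S\to T(m')$ (or of $T(m)\to T(m')\Smash T(m)$) and comparing it with the internal degrees of the $t_i$. This is exactly the kind of estimate that appeared in the proof of Lemma~\ref{lem-algebroid} (the connectivity of $i_q\Smash 1$), so I would extract a quantitative version of that argument and feed it into a standard $\lim^1$/tower-convergence lemma. A secondary, more cosmetic point is arranging the indexing so that the fibres are literally \emph{products} (not just wedges) of suspensions of $T(m')$ and that $Q(r)=0$ for $r<0$; this is handled by taking $Q(r)$ to be the $r$-th partial totalization of the cosimplicial resolution, with $Q(-1)=\ast$.
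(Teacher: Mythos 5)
Your proposal is essentially the same argument as the paper's (a generalised Adams resolution with respect to $T(m')$, with Lemma~\ref{lem-algebroid} as the key input), just packaged as the partial totalizations of a cobar cosimplicial object rather than as the paper's iterated smash with $M=\operatorname{fib}(S\to T(m'))$. The paper's formulation is slightly cleaner at the one point you correctly flag as needing care: with $N(r)=M^{(r)}\Smash T(m)$ and $P(r)=T(m')\Smash M^{(r)}\Smash T(m)$, the layers are \emph{directly} free $T(m')$-modules (so wedges, hence by finite type also products, of suspensions of $T(m')$), whereas the layers of your Tot tower are the normalizations of the cobar terms, i.e.\ only \emph{retracts} of $T(m')^{\Smash(r+1)}\Smash T(m)$, so you would need an extra observation (e.g.\ that bounded-below graded projectives over the connected $\Zpl$-algebra $T(m')_*$ are free) to conclude. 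Likewise, the convergence estimate you ask for is exactly what the paper supplies: $M$ is $(d-1)$-connected with $d=2(p-1)>0$, so $N(r)$ has connectivity growing linearly in $r$ and the tower converges.
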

\begin{proof}
 This is essentially a standard construction with generalised Adams
 resolutions.  

 Let $j\:M\to S$ denote the fibre of the unit map $S\to T(m')$ (where
 $S$ denotes the $p$-local sphere).  Recall that $H_*T(m')$ is a
 polynomial ring over $\Zpl$ with generators $t_i$ in degree
 $2(p^i-1)$ for $1\leq i\leq m'$.  It follows that $M$ is
 $(d-1)$-connected, where $d=2(p-1)>0$.  Now put
 $N(r)=M^{(r)}\Smash T(m)$.  We can use $j$ to make these into a
 tower.  We let $P(r)$ denote the cofibre of the map $N(r+1)\to N(r)$,
 which is $T(m')\Smash M^{(r)}\Smash T(m)$.  We also let $Q(r)$ denote
 the cofibre of the map $N(r)\to N(0)=T(m)$.  Connectivity arguments
 show that $T(m)$ is the homotopy inverse limit of the spectra $Q(r)$.
 We know from Lemma~\ref{lem-algebroid}  
 \begin{align*}
  T(m')_*T(m') &= T(m')_*[t_i\st 1\leq i\leq m'] \\
  T(m')_*T(m)  &= T(m')_*[t_i\st 1\leq i\leq m].
 \end{align*}
 It follows that the spectra $T(m')\Smash T(m')$ and
 $T(m')\Smash T(m)$ are free modules over $T(m')$, and thus that the
 same is true of $P(r)$.  This means that
 $P(r)=\bigWedge_i\Sg^{d_i}T(m')$ for some sequence $(d_i)$.  It is
 also easy to see that $P(r)$ is of finite type, so $d_i\to\infty$, so
 $P(r)$ can also be described as $\prod_i\Sg^{d_i}T(m')$.  Note also
 that the fibre of the map $Q(r+1)\to Q(r)$ is the same as $P(r)$, by
 the octahedral axiom.
\end{proof}

We will also need the following fact about $K'(n)$:
\begin{lemma}\lbl{lem-kk-ring}
 $K'(n)$ admits a ring structure such that the natural map
 $F(n)\to K'(n)$ is a ring map. 
\end{lemma}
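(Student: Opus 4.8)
The plan is to obtain the ring structure on $K'(n) = F(n)[w_n^{-1}]$ by recognising it as a telescope (homotopy colimit) of a directed system of $F(n)$-bimodule self-maps, all of which are compatible with the multiplication on $F(n)$. Recall that $F(n) = F(U(n),U(n))$ is a finite ring spectrum, and that the chosen good $v_n$ self-map $w_n$ of $U(n)$ satisfies $w_n\Smash 1 = 1\Smash w_n$ as endomorphisms of $U(n)\Smash U(n)$; this is precisely the condition built into Definition~4.5 of~\cite{host:mkl}. Transporting $w_n$ to an element of $\pi_*F(n)$, this commutativity says that left multiplication and right multiplication by $w_n$ agree on $F(n)$, so multiplication by $w_n$ is a map of $F(n)$-bimodules. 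Consequently, in the directed system
\[ F(n) \xra{w_n} \Sg^{-|w_n|}F(n) \xra{w_n} \Sg^{-2|w_n|}F(n) \xra{w_n} \dotsb, \]
every map is a bimodule map, and hence the colimit $K'(n)$ inherits a multiplication $K'(n)\Smash K'(n)\to K'(n)$ by passing the multiplication $F(n)\Smash F(n)\to F(n)$ to the colimit (using that smash products commute with filtered homotopy colimits). The unit $S\to F(n)\to K'(n)$ and the associativity and commutativity of the product are then immediate from the corresponding properties of $F(n)$, checked on the level of the directed system.

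The one point that needs genuine care — and which I expect to be the main obstacle — is the passage from a self-map $w_n$ of $U(n)$ to a bimodule self-map of $F(n) = F(U(n),U(n))$, and the verification that inverting it commutes with the smash-product pairing. The ring structure on $F(n)$ is composition of self-maps of $U(n)$, i.e. $F(U(n),U(n))\Smash F(U(n),U(n))\to F(U(n),U(n))$; under this identification, $w_n\in\pi_*F(n)$ corresponds to post- (or pre-) composition with the self-map $w_n$ of $U(n)$, and the relation $w_n\Smash 1 = 1\Smash w_n$ on $U(n)\Smash U(n)$ is exactly what guarantees that pre- and post-composition agree, so that multiplication by $w_n$ is central, hence a bimodule endomorphism. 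One should also record that $F(n)$ is a finite spectrum, so that $F(n)[w_n^{-1}]$ really is the sequential homotopy colimit above (rather than some larger localisation), and that $F(n)\Smash F(n)$ is likewise finite, so that $F(n)[w_n^{-1}]\Smash F(n)[w_n^{-1}] = (F(n)\Smash F(n))[w_n^{-1}]$ — this last identification is what lets the multiplication descend. Finiteness makes all of this elementary, but the bookkeeping of which copy of $w_n$ one inverts on each smash factor, and why it does not matter, is where the real content sits.

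An alternative, cleaner route — which I would mention as a remark rather than carry out — is to invoke the general theory of idempotent-completed localisations of ring spectra: $w_n$ generates a (non-unital) multiplicative subset of $\pi_*F(n)$ satisfying an Ore-type condition because of centrality, and~\cite[Section 6.5]{ra:ccs} or the standard theory of smashing Bousfield localisations yields that $F(n)[w_n^{-1}]$ is a ring spectrum under $F(n)$ whenever the self-map is central. But for the purposes of this paper the explicit telescope description is preferable, since it makes the ring map $F(n)\to K'(n)$ manifest and requires no appeal to model-categorical machinery. I would therefore present the telescope argument in full and relegate the abstract version to a one-line remark.
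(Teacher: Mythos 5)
Your telescope approach and the paper's proof both turn on the fact that $K'(n)$ is a localization of $F(n)$ away from $w_n$, but you have passed over the step that is the actual content of the lemma. You propose to write $K'(n)$ as the homotopy colimit of the tower $F(n)\xra{w_n}\Sg^{-|w_n|}F(n)\xra{w_n}\dotsb$, note that centrality of $w_n$ makes each transition map an $F(n)$-bimodule map, and conclude that the multiplication ``passes to the colimit''. In the homotopy category this last step is not automatic. A compatible family of maps out of the stages of a telescope only determines a map out of the telescope up to the $\lim^1$ term in the Milnor exact sequence, and the coherence you need (unitality, associativity) requires comparing \emph{two} such lifts out of higher smash powers of $K'(n)$: two maps can agree at every finite stage and still differ. ``Checked on the level of the directed system'' is therefore not a proof.

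The paper's argument isolates and fills precisely this gap. It shows that the cofibre $Q$ of $F(n)\to K'(n)$ satisfies $F(Q,K'(n))=0$, using the Nilpotence Theorem on finite spectra of type $n+1$. This forces the restriction map $F(K'(n)^{(r)},K'(n))\to F(F(n)^{(r)},K'(n))$ to be an isomorphism for all $r$, which is exactly the uniqueness statement that is needed: the multiplication on $F(n)$ extends over $K'(n)\Smash K'(n)$ in a \emph{unique} way, and uniqueness (not mere existence) is what gives associativity and unitality for free, by checking at $r=3$ and $r=1$. If you unwind what would be needed to make your telescope construction rigorous --- vanishing of the relevant $\lim^1$ groups --- you arrive at the same Nilpotence-Theorem input, so the paper's route is not a detour but the heart of the matter. (The other way to rescue your argument is to work in a point-set symmetric monoidal model of spectra, where the telescope and the multiplication can be built strictly; but that is the model-category machinery you explicitly said you wanted to avoid, and the paper's argument in fact stays entirely inside the homotopy category.)
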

The standard way to prove this is to show that $K'(n)$ is a Bousfield
localisation of $F(n)$.  We will give essentially the same argument,
formulated in a more direct way.
\begin{proof}
 If $Y$ is a finite spectrum of type $n+1$ then $1_{DY}\Smash w_n$
 induces a nilpotent endomorphism of $MU\Smash (DY\Smash F(n))$, so
 the Nilpotence Theorem tells us that $1_{DY}\Smash w_n$ is itself
 nilpotent, which implies that the spectrum
 $F(Y,K'(n))=DY\Smash F(n)[w_n^{-1}]$ is zero.  

 Now let $Q$ be the cofibre of the natural map $F(n)\to K'(n)$.  It is
 not hard to see that this is a homotopy colimit of spectra isomorphic
 to $F(n)/w_n^k$, which are finite and of type $n+1$.  Using this, we
 see that $F(Q,K'(n))=0$.  It follows inductively that the restriction
 map 
 \[ F(K'(n)^{(r)},K'(n)) \to F(F(n)^{(r)},K'(n)) \]
 are isomorphisms for all $r\geq 0$.  Using the case $r=2$, we see
 that the map 
 \[ F(n) \Smash F(n) \xra{\text{mult}} F(n) \xra{} K'(n) \]
 extends in a unique way over $K'(n)\Smash K'(n)$.  Using the cases
 $r=3$ and $r=1$, we see that this extension gives an associative and
 unital product.
\end{proof}

\section{Relations in \texorpdfstring{$\CL$}{L}}

We first recall some basic general facts about Bousfield classes:
\begin{proposition}\lbl{prop-bc-omni}\leavevmode
 \begin{itemize}
  \item[(a)] If $R$ is a ring spectrum then
   $\bc{R}\Smash\bc{R}=\bc{R}$.  Moreover, if $M$ is any $R$-module
   spectrum then $\bc{M}=\bc{R}\Smash\bc{M}\leq\bc{R}$.
  \item[(b)] Let $K$ be a ring spectrum such that all nonzero
   homogeneous elements of $K_*$ are invertible.  Then for any $X$ we
   have either $K_*X=0$ and $\bc{K}\Smash\bc{X}=0$, or $K_*X\neq 0$
   and $\bc{K}\Smash\bc{X}=\bc{K}$ and $\bc{X}\geq\bc{K}$.
  \item[(c)] Let $X$ be a spectrum, and let $v\:\Sg^dX\to X$ be a
   self-map with cofibre $X/v$ and telescope $X[v^{-1}]$.  Then
   $\bc{X}=\bc{X/v}\Wedge\bc{X[v^{-1}]}$.
  \item[(d)] Let $T$ and $X$ be spectra such that the homotopy groups
   of $X$ are finitely generated over $\Zpl$.  Then $T\Smash IX=0$ iff
   $T\Smash I(X/p)=0$ iff $F(T,X/p)=0$.
  \item[(e)] Suppose again that the homotopy groups
   of $X$ are finitely generated over $\Zpl$, and that they are not
   all torsion groups.  Then 
   $\bc{X}=\bc{X^\wedge_p}=\bc{H\Q}\Wedge\bc{X/p}$.
 \end{itemize}
\end{proposition}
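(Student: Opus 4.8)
I would prove the five parts in the order (a), (b), (c), (e), (d): part~(b) uses~(a), part~(e) uses~(a) and~(c), and part~(d) uses~(a), while (a)--(c) are formal facts about retracts and cofibre sequences. For~(a): the unit and multiplication of $R$ exhibit $R$ as a retract of $R\Smash R$, and for an $R$-module $M$ the unit and action exhibit $M$ as a retract of $R\Smash M$; since $\bc{Y}\leq\bc{Z}$ whenever $Y$ is a retract of $Z$, and $\bc{Z\Smash W}\leq\bc{Z}$ always (multiply $W\leq1$ by $\bc{Z}$), this yields $\bc{R}\Smash\bc{R}=\bc{R}$ and $\bc{M}=\bc{R}\Smash\bc{M}\leq\bc{R}$. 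For~(b): the hypothesis makes $K_*$ a graded field, so the $K$-module $K\Smash X$ is a wedge of suspensions of $K$; it vanishes exactly when $K_*X=0$, and when nonzero it has $\bc{K\Smash X}=\bc{\bigWedge_i\Sg^{n_i}K}=\bigWedge_i\bc{K}=\bc{K}$ by complete distributivity, so $\bc{K}\Smash\bc{X}$ is $0$ or $\bc{K}$ accordingly, and in the nonzero case $\bc{X}\geq\bc{K\Smash X}=\bc{K}$. For~(c): smash the cofibre sequence $\Sg^dX\to X\to X/v$ with a test object $Y$ and pass to the colimit defining $X[v^{-1}]$; then $Y\Smash X=0$ forces $Y\Smash X/v=0$ and $Y\Smash X[v^{-1}]=0$, while conversely $Y\Smash X/v=0$ makes $v$ invertible on $Y\Smash X$ so that $Y\Smash X\simeq Y\Smash X[v^{-1}]$, which is exactly the assertion $\bc{X}=\bc{X/v}\Wedge\bc{X[v^{-1}]}$.

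For~(e), apply~(c) to the degree-zero self-map $p\colon X\to X$, giving $\bc{X}=\bc{X/p}\Wedge\bc{X[p^{-1}]}$. As $X$ is $p$-local, $X[p^{-1}]$ is rational, and it is nonzero precisely because $\pi_*X$ is not all torsion; any nonzero rational spectrum $Z$ has $\bc{Z}=\bc{H\Q}$ (it is a module over the rational sphere $S_\Q$, so $\bc{Z}\leq\bc{S_\Q}=\bc{H\Q}$ by~(a), and $H\Q$ splits off $Z$, giving the reverse inequality). Hence $\bc{X}=\bc{X/p}\Wedge\bc{H\Q}$, which is the second asserted equality. For the first, run the same argument with $X^\wedge_p$ in place of $X$: the $p$-completion map $X\to X^\wedge_p$ is an $S/p$-equivalence (being Bousfield localization at $S/p$), so $(X^\wedge_p)/p\simeq X/p$, and since $\pi_*X$ is finitely generated over $\Zpl$ and not all torsion, $\pi_*(X^\wedge_p)$ is not all torsion either, so $(X^\wedge_p)[p^{-1}]\neq0$; therefore $\bc{X^\wedge_p}=\bc{X/p}\Wedge\bc{H\Q}=\bc{X}$.

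Part~(d) is the substantive one, and rests on the interplay of Brown--Comenetz duality $I$ with $S/p$. I would use two standard identities: $F(A,IB)\simeq I(A\Smash B)$ (from the defining adjunction for $I$) and the self-duality $D(S/p)\simeq\Sg^{-1}S/p$ (dualize $S\xra{p}S\to S/p$). Combining them, $I(X/p)=I(X\Smash S/p)\simeq F(S/p,IX)\simeq D(S/p)\Smash IX\simeq\Sg^{-1}(IX/p)$, and hence $T\Smash I(X/p)\simeq\Sg^{-1}(T\Smash IX)/p$, so $T\Smash I(X/p)=0$ iff $p$ acts invertibly on $T\Smash IX$, i.e.\ iff $T\Smash IX$ is rational. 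Now the finiteness hypothesis enters: $\pi_n(IX)=\Hom(\pi_{-n}X,\QZpl)$ and $\QZpl\cong\mathbb{Z}/p^\infty$ is $p$-torsion, so $\pi_*(IX)$ is all $p$-torsion and $H\Q\Smash IX=0$ when $\pi_*X$ is finitely generated over $\Zpl$; if in addition $T\Smash IX$ is rational, then by~(a) it is an $S_\Q$-module, so $\bc{T\Smash IX}=\bc{S_\Q}\Smash\bc{T\Smash IX}=\bc{T\Smash(H\Q\Smash IX)}=0$, whence $T\Smash IX=0$. This establishes $T\Smash IX=0\iff T\Smash I(X/p)=0$. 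Finally, finite generation also makes $\pi_*(X/p)$ finite in each degree, so the canonical map $X/p\to II(X/p)$ is an equivalence and $F(T,X/p)\simeq I(T\Smash I(X/p))$; since $\QZpl$ is an injective cogenerator for $p$-local abelian groups, $I$ is conservative, so $F(T,X/p)=0\iff T\Smash I(X/p)=0$. The hard part is the bookkeeping in~(d): keeping the duality identifications straight and recognising that the finiteness hypothesis is genuinely used twice---once to force $\pi_*(IX)$ to be torsion, and once to make double Brown--Comenetz duality an equivalence on $X/p$---whereas (a), (b), (c) and~(e) are routine.
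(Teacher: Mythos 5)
Your proof is correct and follows essentially the same strategy as the paper: (a) by retracts, (b) by the free-module structure over a graded field, (c) by the telescope/cofibre trichotomy, (e) by applying (c) to multiplication by $p$ and identifying $p$-completion via $S/p$-equivalence, and (d) by combining $\pi_*(IX)$ being torsion (from finite generation) with the relation $(IX)/p \simeq \Sg I(X/p)$ and double Brown--Comenetz duality on $X/p$. The only cosmetic difference is in (d), where the paper deduces $\bc{IX}=\bc{I(X/p)}$ directly from part (c) whereas you phrase the first equivalence through Spanier--Whitehead self-duality of $S/p$ and a rationality argument; the underlying content is identical.
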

\begin{proof}
 None of this is new, but we will give brief proofs for the
 convenience of the reader.
 \begin{itemize}
  \item[(a)] It is immediate from the definitions that
   $\bc{X\Smash Y}\leq\bc{X}$ and $\bc{X\Smash Y}\leq\bc{Y}$.
   Similarly, it is clear that $\bc{X}\leq\bc{Y}$ whenever
   $X$ is a retract of $Y$.  If $R$ is a ring and $M$ is an $R$-module
   then $M$ is a retract of $R\Smash M$ (via the unit map
   $\eta\Smash 1\:M\to R\Smash M$ and the multiplication
   $R\Smash M\to M$) so $\bc{M}\leq\bc{R\Smash M}$.  On the other
   hand, we have $\bc{R\Smash M}\leq\bc{R}$ and
   $\bc{R\Smash M}\leq\bc{M}$.  Putting this together gives
   $\bc{M}=\bc{R}\Smash\bc{M}\leq\bc{R}$ as claimed.  Taking $M=R$
   gives $\bc{R}\Smash\bc{R}=\bc{R}$.  This is all covered
   by~\cite[Section 2.6]{bo:bas} and~\cite[Proposition 1.24]{ra:lrc}.
  \item[(b)] A slight adaptation of standard linear algebra shows that
   all graded modules over $K_*$ are free.  If $M$ is a $K$-module
   then we can choose a basis $\{e_i\}_{i\in I}$ for $M_*$ over $K_*$,
   and this will give a map $f\:\bigWedge_{i\in I}\Sg^{|e_i|}K\to M$
   of $K$-modules such that $\pi_*(f)$ is an isomorphism, which means
   that $f$ is an equivalence.  Thus, if $M_*\neq 0$ then
   $\bc{M}=\bc{K}$.  Taking $M=K\Smash X$ gives claim~(b).  This is
   all covered in~\cite[Section 1.3]{hosm:nshii}.
  \item[(c)] First note that if $X=0$ then it is clear that $X/v=0$
   and $X[v^{-1}]=0$.  Conversely, if $X/v=0$ then $v$ is an
   equivalence, so $X[v^{-1}]=X$; so if $X[v^{-1}]$ is also $0$, then
   $X=0$.  Thus, we have $X=0$ iff $X/v=X[v^{-1}]=0$.  Now let $T$ be
   an arbitrary spectrum, and put 
   \[ w=1_T\Smash v\:T\Smash X\to T\Smash X, \]
   so $T\Smash(X/v)=(T\Smash X)/w$ and
   $T\Smash X[v^{-1}]=(T\Smash X)[w^{-1}]$.  By applying our first
   claim to $w$, we see that $T\Smash X=0$ iff
   $T\Smash(X/v)=T\Smash X[v^{-1}]=0$.  In other words, we have
   $\bc{X}=\bc{X/v}\Wedge\bc{X[v^{-1}]}$ as claimed.  This
   is~\cite[Lemma 1.34]{ra:lrc}.
  \item[(d)] First, we have $\pi_k(IX)=\Hom(\pi_{-k}(X),\QZpl)$.
   Using the fact that $\pi_{-k}(X)$ is finitely generated, we see
   that this is a torsion group.  It follows that $(IX)[p^{-1}]=0$,
   so~(c) gives $\bc{IX}=\bc{(IX)/p}$.  On the other hand, $I$
   converts cofibrations to fibrations (with arrows reversed), giving
   $(IX)/p=\Sg I(X/p)$, so $\bc{IX}=\bc{I(X/p)}$, so
   $T\Smash IX=0$ iff $T\Smash I(X/p)=0$.  Next, we note that each
   homotopy group $\pi_k(X/p)$ is finite, which implies that the
   natural map 
   \[ \pi_k(X/p) \to \Hom(\Hom(\pi_k(X/p),\QZpl),\QZpl) \]
   is an isomorphism, so the natural map $X/p\to I^2(X/p)$ is an
   equivalence.  This gives 
   \[ \pi_kF(T,X/p) = \pi_kF(T,I^2(X/p)) =
       \Hom(\pi_{-k}(T\Smash I(X/p)),\QZpl).
   \]
   It is well-known that for an abelian group $A$ we have $A=0$ iff
   $\Hom(A,\QZpl)=0$, so $F(T,X/p)=0$ iff $T\Smash I(X/p)=0$, as
   claimed.  (This is essentially covered
   by~\cite[Section 2]{ra:lrc}.) 
  \item[(e)] As a special case of~(c) we have
   $\bc{X}=\bc{X[p^{-1}]}\Wedge\bc{X/p}$.  As everything is implicitly
   $p$-local we see that $X[p^{-1}]$ is a module over $S[p^{-1}]=S\Q=H\Q$,
   with homotopy groups $\pi_*(X)\ot\Q\neq 0$, so
   $\bc{X[p^{-1}]}=\bc{H\Q}$, so $\bc{X}=\bc{H\Q}\Wedge\bc{X/p}$.  Now
   let $Y$ denote the $p$-completion of $X$, which can be constructed
   as the cofibre of the natural map $F(S\Q,X)\to X$.  As $X$ is
   assumed to have finite type, we just have
   $\pi_*(Y)=\Z_p\ot\pi_*(X)$, and this is again not a torsion group,
   so $\bc{Y}=\bc{H\Q}\Wedge\bc{Y/p}$.  Moreover, as $F(S\Q,X)$ is a
   module over $S\Q$ we see that $F(S\Q,X)/p=0$ and so $X/p=Y/p$,
   which gives $\bc{Y}=\bc{X}$.
 \end{itemize}
\end{proof}

We next recall some relations between the elements named in
Definition~\ref{defn-basic}.  Again, many of these results are in the
literature, but it seems useful to collect proofs in one place.

\begin{lemma}\lbl{lem-Kn-mod}
 For any $n\in\Ni$ and any spectrum $X$, we have either $K(n)_*X=0$
 and $k(n)\Smash\bc{X}=0$, or $K(n)_*X\neq 0$ and
 $k(n)\Smash\bc{X}=k(n)$ and $\bc{X}\geq k(n)$.
\end{lemma}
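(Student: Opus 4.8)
For any $n\in\Ni$ and any spectrum $X$, we have either $K(n)_*X=0$ and $k(n)\Smash\bc{X}=0$, or $K(n)_*X\neq 0$ and $k(n)\Smash\bc{X}=k(n)$ and $\bc{X}\geq k(n)$.

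The plan is to invoke Proposition~\ref{prop-bc-omni}(b), which applies to any ring spectrum $K$ whose nonzero homogeneous elements of $K_*$ are all invertible. So the only real content here is to check that each $K(n)$ for $n\in\Ni$ is a ring spectrum of this kind, and then to unwind the statement of part~(b) into the trichotomy-free dichotomy asserted here (noting that $\bc{K}\Smash\bc{X}=\bc{K}$ already forces $\bc{X}\geq\bc{K}$, since $\bc{K}=\bc{K}\Smash\bc{X}\leq\bc{X}$).

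First I would dispose of the three ranges of $n$. For $0<n<\infty$, the spectrum $K(n)$ is the Morava $K$-theory, which is a ring spectrum (indeed an $A_\infty$, even $E_1$, ring) with $K(n)_*=\mathbb{F}_p[v_n,v_n^{-1}]$; every nonzero homogeneous element is a unit times a power of $v_n$, hence invertible. For $n=0$ we have $K(0)=H\Q$ with $K(0)_*=\Q$ concentrated in degree $0$, and every nonzero element of $\Q$ is invertible. For $n=\infty$ we have $K(\infty)=H\mathbb{F}_p$ with $K(\infty)_*=\mathbb{F}_p$ in degree $0$, and again every nonzero homogeneous element is invertible. In all three cases $K(n)$ is a (homotopy-commutative, or at least associative and unital) ring spectrum, so the hypotheses of Proposition~\ref{prop-bc-omni}(b) are met.

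Then I would simply apply Proposition~\ref{prop-bc-omni}(b) with $K=K(n)$. It gives: either $K(n)_*X=0$, in which case $\bc{K(n)}\Smash\bc{X}=0$, i.e. $k(n)\Smash\bc{X}=0$; or $K(n)_*X\neq 0$, in which case $\bc{K(n)}\Smash\bc{X}=\bc{K(n)}$ and $\bc{X}\geq\bc{K(n)}$, i.e. $k(n)\Smash\bc{X}=k(n)$ and $\bc{X}\geq k(n)$. This is exactly the claimed statement, so the proof is complete.

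There is essentially no obstacle: the lemma is a direct specialisation of a general fact already established. The only point requiring any care is the treatment of the two endpoint cases $n=0$ and $n=\infty$, where ``$K(n)$'' denotes an Eilenberg--MacLane spectrum rather than a genuine periodic theory; but $H\Q$ and $H\mathbb{F}_p$ both trivially satisfy the invertibility hypothesis because their coefficient rings are fields concentrated in a single degree, so there is nothing subtle there either.
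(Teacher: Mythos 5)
Your proof is correct and is exactly the paper's proof: the paper simply says the lemma is a standard instance of Proposition~\ref{prop-bc-omni}(b), and you have filled in the routine verification that each $K(n)$ (for $n\in\N$, $n=0$, $n=\infty$) is a ring spectrum whose graded coefficient ring is a graded field.
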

\begin{proof}
 This is a standard instance of Proposition~\ref{prop-bc-omni}(b).
\end{proof}

\begin{lemma}\lbl{lem-kk}
 For all $i$ we have $k(i)\Smash k(i)=k(i)$, and $k(i)\Smash k(i')=0$
 for $i\neq i'$.  Thus $k(U)\Smash k(U')=k(U\cap U')$ and
 $k(U)\Wedge k(U')=k(U\cup U')$ (for all $U,U'\sse\Ni$).
\end{lemma}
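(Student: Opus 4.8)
The plan is to prove the two statements about the single generators $k(i)$ first, and then read off the formulas for $k(U)\Smash k(U')$ and $k(U)\Wedge k(U')$ from them using the complete distributivity of $\CL$. The relation $k(i)\Smash k(i)=k(i)$ is immediate: each $K(i)$ carries a ring structure --- Morava $K$-theory for $i\in\N$, and the Eilenberg--MacLane spectra $K(0)=H\mathbb{Q}$ and $K(\infty)=H\mathbb{Z}/p$ --- so Proposition~\ref{prop-bc-omni}(a) gives $\bc{K(i)}\Smash\bc{K(i)}=\bc{K(i)}$.

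For the orthogonality $k(i)\Smash k(i')=0$ with $i\neq i'$, since $\Smash$ is commutative and monotone I would reduce to the case $i<i'$ in $\Ni$. If $i'<\infty$, choose a finite spectrum $F(i')$ of type $i'$ as in Definition~\ref{defn-basic}. As $F(i')$ has type $i'$, which is $>i$, we have $K(i)_*F(i')=0$, so $K(i)\Smash F(i')$ has vanishing homotopy groups and is therefore contractible, whence $k(i)\Smash\bc{F(i')}=0$; on the other hand $K(i')_*F(i')\neq 0$, so Lemma~\ref{lem-Kn-mod} gives $\bc{F(i')}\geq k(i')$. Applying the order-preserving operation $k(i)\Smash(-)$ then gives $k(i)\Smash k(i')\leq k(i)\Smash\bc{F(i')}=0$, hence equality, since $0$ is the least element. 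The remaining case is $i\in\N$, $i'=\infty$, where one must show $K(i)\Smash H\mathbb{Z}/p=0$, i.e.\ $K(i)_*H\mathbb{Z}/p=0$. For $i=0$ this is elementary, since $H\mathbb{Q}\Smash H\mathbb{Z}/p$ is the rationalisation of $H\mathbb{Z}/p$ and has homotopy $\mathbb{Z}/p\otimes\mathbb{Q}=0$. For $i\geq 1$ it is the classical fact that $H\mathbb{Z}/p$ is dissonant (so that $K(n)_*H\mathbb{Z}/p=0$ for all $1\leq n<\infty$; see, e.g., \cite{ra:lrc}). This last point is the only step that appeals to input beyond the formal apparatus built so far, and it is the part I expect to be the main obstacle --- in the sense of requiring an external theorem rather than a bookkeeping argument.

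Finally, for the ``Thus'' clause, recall from Definition~\ref{defn-basic} that $K(U)=\bigWedge_{i\in U}K(i)$; since $\CL$ is completely distributive with $\bigWedge_i\bc{E_i}=\bc{\bigWedge_iE_i}$ (Definition~\ref{defn-L}), we obtain $k(U)=\bigWedge_{i\in U}k(i)$. Distributing $\Smash$ over arbitrary joins in each variable then gives
\[ k(U)\Smash k(U')=\bigWedge_{i\in U,\;i'\in U'}k(i)\Smash k(i')=\bigWedge_{i\in U\cap U'}k(i)=k(U\cap U'), \]
where in the middle step the terms with $i\neq i'$ vanish, the terms with $i=i'$ contribute $k(i)$, and the empty join arising when $U\cap U'=\emptyset$ is $0=k(\emptyset)$. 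The identity $k(U)\Wedge k(U')=k(U\cup U')$ needs none of the above: it follows from $\bc{K(U)}\Wedge\bc{K(U')}=\bc{K(U)\Wedge K(U')}$ together with the observation that $K(U)\Wedge K(U')$ is the wedge of the $K(i)$ over $i\in U\cup U'$ (with repeats, which do not affect the Bousfield class).
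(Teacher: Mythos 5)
Your proof is correct, but for the key orthogonality $k(i)\Smash k(i')=0$ (when $i\neq i'$) you take a genuinely different route from the paper. The paper deduces it from the fact that the two complex orientations of $K(i)\Smash K(i')$ produce isomorphic formal group laws over $K(i)_*K(i')$ of the two different heights $i$ and $i'$, which is impossible over a nonzero graded $K(i)_*$-algebra; it also cites \cite[Theorem 2.1]{ra:lrc} as a backup. You instead reduce by symmetry to $i<i'$, and for $i'<\infty$ use the test finite complex $F(i')$ of type $i'$ together with Lemma~\ref{lem-Kn-mod} to squeeze $k(i)\Smash k(i')$ between $0$ and $k(i)\Smash\bc{F(i')}=0$; the remaining case $i'=\infty$ you handle separately by invoking the dissonance of $H/p$. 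Both arguments are sound. Yours leans only on ingredients already assembled in the paper (existence of finite spectra of each type, Lemma~\ref{lem-Kn-mod}) plus the single external input $K(n)_*H/p=0$ for $0<n<\infty$, at the cost of a case split; the paper's formal-group-law argument treats all pairs $(i,i')$ uniformly but appeals to complex-orientation theory not otherwise set up in the text. Your unpacking of the ``Thus'' clause via complete distributivity of $\CL$ is exactly what the paper means when it says the remaining claims are clear from the first two.
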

\begin{proof}
 The first claim holds because $K(i)$ is a ring spectrum, and the
 second can be deduced from the fact that over $K(i)_*K(i')$ we have
 two isomorphic formal group laws of different heights.  It is also
 proved as part of~\cite[Theorem 2.1]{ra:lrc}.  The remaining claims
 are clear from the first two. 
\end{proof}

\begin{lemma}\lbl{lem-rings-idempotent}
 The elements $t(q)$, $f(n)$, $t(q;n)$, $k(i)$, $k(U)$ and $k'(i)$ all
 satisfy $u\Smash u=u$.
\end{lemma}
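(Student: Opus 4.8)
The plan is to deal with each kind of element by exhibiting a ring structure and invoking Proposition~\ref{prop-bc-omni}(a), which says that $\bc{R}\Smash\bc{R}=\bc{R}$ whenever $R$ is a ring spectrum. So the whole proof reduces to the observation that each of the relevant spectra carries a ring structure. First, $T(q)$ is a ring spectrum by construction (it is the $p$-typical summand of the Thom spectrum $X(p^q)$, which has a natural ring structure, as recorded in Definition~\ref{defn-basic}), so $t(q)\Smash t(q)=t(q)$. Next, $F(n)=F(U(n),U(n))$ is an endomorphism spectrum, hence a ring spectrum, so $f(n)\Smash f(n)=f(n)$.

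For $t(q;n)=t(q)\Smash f(n)$ we use that the smash product of two ring spectra is again a ring spectrum: $T(q)\Smash F(n)$ is a ring, so $t(q;n)\Smash t(q;n)=t(q;n)$. Alternatively, one can argue purely semiring-theoretically: in any ordered semiring, if $u\Smash u=u$ and $v\Smash v=v$ then $(u\Smash v)\Smash(u\Smash v)=u\Smash v$ by commutativity and associativity of $\Smash$, so idempotence is closed under products; this gives $t(q;n)$ from $t(q)$ and $f(n)$, and also handles $k(U)=\bigWedge_{i\in U}k(i)$ once we know each $k(i)$ is idempotent — but in fact the cleaner route for the $k$'s is just to quote Lemma~\ref{lem-kk}, which already asserts $k(i)\Smash k(i)=k(i)$ and $k(U)\Smash k(U')=k(U\cap U')$, the latter giving $k(U)\Smash k(U)=k(U)$ directly. (Underlying this, $K(i)$ is a ring spectrum and $K(U)=\bigWedge_{i\in U}K(i)$, so one could again appeal to Proposition~\ref{prop-bc-omni}(a).)

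The one case needing an actual argument is $k'(i)$: we must know that $K'(i)=F(i)[w_i^{-1}]$ is a ring spectrum. This is exactly the content of Lemma~\ref{lem-kk-ring}, which produces a ring structure on $K'(n)$ making $F(n)\to K'(n)$ a ring map; granting that, Proposition~\ref{prop-bc-omni}(a) gives $k'(i)\Smash k'(i)=k'(i)$. So the proof is simply: "$T(q)$, $F(n)$, $T(q)\Smash F(n)$, $K(i)$, $K(U)$ and (by Lemma~\ref{lem-kk-ring}) $K'(i)$ are all ring spectra, so the claim follows from Proposition~\ref{prop-bc-omni}(a)", with perhaps a parenthetical remark that idempotence of $\Smash$-elements is closed under $\Smash$-products. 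There is no real obstacle here — the only point that is not immediate from "it's obviously a ring" is the ring structure on the telescope $K'(n)$, and that has been isolated as a separate lemma already proved above; the proof should just cite it rather than re-deriving anything.
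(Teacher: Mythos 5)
Your proof is correct and follows essentially the same route as the paper: cite Lemma~\ref{lem-kk} for $k(i)$ and $k(U)$, observe that $T(q)$, $F(n)$ (and hence $T(q)\Smash F(n)$) are ring spectra by construction and $K'(n)$ is a ring by Lemma~\ref{lem-kk-ring}, and conclude via Proposition~\ref{prop-bc-omni}(a). The extra remark that idempotence under $\Smash$ is closed under $\Smash$-products is a harmless alternative for the $t(q;n)$ case but does not change the argument in any essential way.
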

\begin{proof}
 We have already seen the cases $k(i)$ and $k(U)$.  The spectra $T(q)$
 and $F(n)$ have ring structures by construction, and $K'(n)$ is also
 a ring by Lemma~\ref{lem-kk-ring}, so all remaining claims follow
 from Proposition~\ref{prop-bc-omni}(a).
\end{proof}

\begin{lemma}\lbl{lem-kf}
 For all $i\in\Ni$ and $n\in\N$ we have $k(i)\Smash f(n)=0$ if
 $i<n$, and $k(i)\Smash f(n)=k(i)$ if $i\geq n$.
\end{lemma}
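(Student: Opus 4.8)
The plan is to apply Lemma~\ref{lem-Kn-mod} (the field-spectrum dichotomy for Morava $K$-theory) with $X=F(n)$. Since $k(i)\Smash f(n)=k(i)\Smash\bc{F(n)}$, that lemma reduces the entire statement to deciding whether $K(i)_*F(n)$ vanishes: if $K(i)_*F(n)=0$ then $k(i)\Smash f(n)=0$, while if $K(i)_*F(n)\neq 0$ then $k(i)\Smash f(n)=k(i)$. So the only real content is the equivalence $K(i)_*F(n)=0\iff i<n$.

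For this, I would simply recall from Definition~\ref{defn-basic} that $F(n)=F(U(n),U(n))$ is a finite spectrum of type $n$, so by the defining property of type $n$ we have $K(i)_*F(n)=0$ precisely when $i<n$, for $i\in\N$. The value $i=\infty$, i.e.\ $K(\infty)=H\mathbb{Z}/p$, is not literally covered by the usual phrasing of "type $n$", but it is handled by the trivial observation that $F(n)$ is a nonzero finite $p$-local spectrum and hence has nonzero mod-$p$ homology, so $K(\infty)_*F(n)\neq 0$; since $\infty\geq n$ this matches the asserted formula. (An alternative to invoking the type of $F(n)$ directly would be to write $F(n)\simeq DU(n)\Smash U(n)$, use the Künneth isomorphism $K(i)_*F(n)\cong K(i)_*DU(n)\otimes_{K(i)_*}K(i)_*U(n)$, and note that $U(n)$ and its dual are both of type $n$; but citing the type of $F(n)$ is shorter.)

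Putting the two steps together gives $K(i)_*F(n)=0$ exactly for $i<n$, and feeding this into Lemma~\ref{lem-Kn-mod} yields $k(i)\Smash f(n)=0$ for $i<n$ and $k(i)\Smash f(n)=k(i)$ for $i\geq n$. I do not expect any genuine obstacle here: the whole argument is a one-line application of a previously established proposition together with the definition of type, and the only point needing a moment's care is remembering to dispose of the endpoint $i=\infty$ by the mod-$p$ homology remark rather than by the type-$n$ condition itself.
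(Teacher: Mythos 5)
Your proof is essentially identical to the paper's: both reduce to Lemma~\ref{lem-Kn-mod} together with the defining property of $F(n)$ being of type $n$. The paper's Definition~\ref{defn-basic} already declares "type $n$" to mean $K(i)_*U(n)=0$ iff $i<n$ with $i$ ranging over all of $\Ni$ (so the $i=\infty$ endpoint is built in by fiat), making your extra remark about mod-$p$ homology an unnecessary but harmless precaution.
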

\begin{proof}
 The spectrum $F(n)$ was defined to have type $n$, which means by
 definition that $K(i)_*F(n)=0$ iff $i<n$.  The claim follows from
 this together with Lemma~\ref{lem-Kn-mod}.
\end{proof}

\begin{lemma}\lbl{lem-tt}
 For all $q\leq q'\leq\infty$ we have $t(q)\geq t(q')$, and
 $t(q)\Smash t(q')=t(q')$.
\end{lemma}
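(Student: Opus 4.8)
The plan is to reduce both assertions to the single statement that $T(q')\Smash T(q)$ is Bousfield equivalent to $T(q')$, i.e.\ $t(q)\Smash t(q')=t(q')$. First I would dispose of the degenerate case $q=q'$, where the first assertion is reflexivity of $\leq$ and the second is Lemma~\ref{lem-rings-idempotent}; so assume $q<q'\leq\infty$ from now on (nothing below changes when $q'=\infty$, using $T(\infty)=BP$). Granting the claimed equivalence, the lemma is immediate: since $\bc{X\Smash Y}\leq\bc{X}$ for any $X,Y$ (recorded in the proof of Proposition~\ref{prop-bc-omni}(a)) we get $t(q)\geq t(q)\Smash t(q')=t(q')$, which is the first assertion, and the second is the equivalence itself.

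So the whole point is to prove $\bc{T(q')\Smash T(q)}=\bc{T(q')}$. The inequality $\bc{T(q')\Smash T(q)}\leq\bc{T(q')}$ again holds for any smash product, so I only need $\bc{T(q')}\leq\bc{T(q')\Smash T(q)}$, and for this it suffices to exhibit $T(q')$ as a retract of $T(q')\Smash T(q)$ (a retract of $Y$ has Bousfield class $\leq\bc{Y}$, again from the proof of Proposition~\ref{prop-bc-omni}(a)). Here Lemma~\ref{lem-algebroid}, applied to the pair $q'\geq q$, gives $T(q')_*T(q)=T(q')_*[t_1,\dotsc,t_q]$, a free module over $T(q')_*$ on monomials of even nonnegative degree, the only degree-zero basis element being $1$, which is the image of $1$ under the unit-induced map $T(q')_*\to T(q')_*T(q)$. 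Exactly as in the proof of Lemma~\ref{lem-tower} (realise the monomial basis by $T(q')$-module maps and use the collapse of the Atiyah--Hirzebruch spectral sequence $H_*(T(q);T(q')_*)=T(q')_*[t_1,\dotsc,t_q]\convto T(q')_*T(q)$, together with $H_*T(q)=\Zpl[t_1,\dotsc,t_q]$), one sees that $T(q')\Smash T(q)$ is a free $T(q')$-module, i.e.\ a wedge of suspensions of $T(q')$, one summand of which is the copy of $T(q')$ split off by the unit map $S\to T(q)$. That unit map therefore exhibits $T(q')$ as a retract of $T(q')\Smash T(q)$, as required, and combining the displayed inequalities gives $t(q)\Smash t(q')=t(q')$.

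The only non-formal ingredient is the last step, i.e.\ passing from the computation of $T(q')_*T(q)$ as a free $T(q')_*$-module to the assertion that $T(q')\Smash T(q)$ is a free $T(q')$-module (and in particular retracts onto $T(q')$); but this is precisely the argument already carried out in the proof of Lemma~\ref{lem-tower}, where $T(q')\Smash T(q)$ appears as the fibre $P(0)$ of the tower, so there is nothing new to check. Indeed, one could bypass Lemma~\ref{lem-algebroid} entirely and simply quote from that proof that $T(q')\Smash T(q)$ is a finite-type wedge $\bigWedge_i\Sg^{d_i}T(q')$, whence $\bc{T(q')\Smash T(q)}=\bigWedge_i\bc{\Sg^{d_i}T(q')}=\bc{T(q')}$ directly; the retract formulation is just slightly cleaner to state.
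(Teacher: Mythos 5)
Your proof is correct, but it takes a substantially heavier route than the paper. The paper's entire proof is the observation that there is a morphism $T(q)\to T(q')$ of ring spectra (standard from Ravenel's construction, since $\Omega SU(p^q)\hookrightarrow\Omega SU(p^{q'})$ induces a ring map on Thom spectra and hence on the $p$-typical summands); this makes $T(q')$ a $T(q)$-module, and Proposition~\ref{prop-bc-omni}(a) then gives $t(q')=t(q)\Smash t(q')\leq t(q)$ in one stroke. You instead invoke the homology computation of Lemma~\ref{lem-algebroid} and the Atiyah--Hirzebruch collapse argument from the proof of Lemma~\ref{lem-tower} to establish that $T(q')\Smash T(q)$ is a free $T(q')$-module (a wedge of suspensions of $T(q')$), and then either peel off the degree-zero summand as a retract or, as in your final remark, just pass to Bousfield classes of the wedge. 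This is a valid alternative and has the minor virtue of not needing to cite the existence of the ring map $T(q)\to T(q')$ directly (you get the splitting from the module-theoretic calculation instead), but it is considerably more machinery than necessary: note in particular that if one does grant the ring map, the retraction $T(q')\to T(q')\Smash T(q)\to T(q')$ is immediate from unit and multiplication without any spectral sequence, which is precisely what Proposition~\ref{prop-bc-omni}(a) packages. You might also have noticed that the needed freeness statement is already recorded verbatim inside the proof of Lemma~\ref{lem-tower} (with $m=q$, $m'=q'$), so your argument is really a corollary of that proof rather than a new computation.
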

\begin{proof}
 There is a morphism $T(q)\to T(q')$ of ring spectra, which makes
 $T(q')$ into a $T(q)$-module spectrum.
\end{proof}

\begin{lemma}\lbl{lem-tk}
 For all $q,i\in\Ni$ we have $t(q)\geq k(i)$, and
 $t(q)\Smash k(i)=k(i)$.
\end{lemma}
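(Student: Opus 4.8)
The plan is to reduce the statement to a single nonvanishing fact and then quote Lemma~\ref{lem-Kn-mod}. Applying that lemma with $n$ replaced by $i$ and $X=T(q)$ gives the dichotomy: either $K(i)_*T(q)=0$ and $k(i)\Smash t(q)=0$, or $K(i)_*T(q)\neq0$ and $k(i)\Smash t(q)=k(i)$ and $t(q)=\bc{T(q)}\geq k(i)$. The second alternative is precisely what we want, so it suffices to prove that $K(i)_*T(q)\neq0$ for all $q,i\in\Ni$.

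For this I would argue with the Atiyah--Hirzebruch spectral sequence. First, $H_*(T(q);\Zpl)$ is a free $\Zpl$-module concentrated in even degrees: for finite $q$ this is the computation $H_*(T(q))=\Zpl[t_1,\dotsc,t_q]$ (with $|t_j|=2(p^j-1)$) used in the proof of Lemma~\ref{lem-algebroid}, and for $q=\infty$ it follows from $H_*(BP;\Zpl)$ being a retract of $H_*(MU_{(p)};\Zpl)=\Zpl[b_1,b_2,\dotsc]$ (with $|b_j|=2j$). Second, $K(i)_*$ is concentrated in even degrees --- it is $\Z/p[v_i^{\pm1}]$ with $|v_i|=2(p^i-1)$ for $1\leq i<\infty$, and it is concentrated in degree $0$ for $i\in\{0,\infty\}$. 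Since $T(q)$ is connective, the spectral sequence
\[ H_s(T(q);K(i)_t)\convto K(i)_{s+t}(T(q)) \]
converges, and its $E^2$-page is concentrated in even total degree by the two facts just noted. Each differential changes total degree by one, so all differentials vanish, $E^2=E^\infty$, and $K(i)_*T(q)$ carries an exhaustive filtration whose associated graded is $H_*(T(q);\Zpl)\ot_{\Zpl}K(i)_*$. This is nonzero already in degree $0$ (it contains $\Zpl\ot_{\Zpl}K(i)_0=K(i)_0\neq0$), so $K(i)_*T(q)\neq0$, as required.

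The only genuinely non-formal ingredient is the collapse of the Atiyah--Hirzebruch spectral sequence, which rests on the two even-concentration facts above; I would expect the bookkeeping for $q=\infty$ and for the Eilenberg--MacLane cases $i\in\{0,\infty\}$ to be the fussiest part. A spectral-sequence-free alternative is available: each $K(i)$ is a module spectrum over $BP=T(\infty)$ --- for $1\leq i<\infty$ by the standard construction $K(i)=v_i^{-1}(BP/I_i)$, and for $i\in\{0,\infty\}$ because the Postnikov truncation $BP\to H\Zpl$ is a ring map, so that $K(0)=H\Q$ and $K(\infty)=H\Z/p$, being a localization and a quotient of $H\Zpl$, are $BP$-modules --- whence restricting scalars along the ring map $T(q)\to BP$ of Lemma~\ref{lem-tt} makes $K(i)$ a $T(q)$-module, and Proposition~\ref{prop-bc-omni}(a) gives $k(i)=\bc{K(i)}=\bc{T(q)}\Smash\bc{K(i)}=t(q)\Smash k(i)\leq t(q)$ at once.
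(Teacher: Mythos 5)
Your main argument is correct but takes a genuinely different route from the paper, while the alternative you sketch at the end \emph{is} the paper's proof. The paper's entire proof is the one sentence ``There is a morphism $T(q)\to T(\infty)=BP\to K(i)$ of ring spectra,'' implicitly invoking Proposition~\ref{prop-bc-omni}(a): a ring map $T(q)\to K(i)$ makes $K(i)$ a $T(q)$-module, so $\bc{K(i)}=\bc{T(q)}\Smash\bc{K(i)}\leq\bc{T(q)}$. Your closing paragraph reproduces exactly this (via Lemma~\ref{lem-tt} for $T(q)\to BP$ and the observation that each $K(i)$ is a $BP$-module), so it is not really an ``alternative'' but rather the intended argument. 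Your primary route --- reduce to $K(i)_*T(q)\neq 0$ via Lemma~\ref{lem-Kn-mod} and then collapse the Atiyah--Hirzebruch spectral sequence using even-concentration of both $H_*(T(q);\Zpl)$ and $K(i)_*$ --- is sound: $T(q)$ is connective so the spectral sequence converges, the $E^2$-page sits in even total degree so it degenerates, and $E^\infty_{0,0}=K(i)_0\neq 0$ already gives nonvanishing (in fact you only need that $E^2_{0,0}$ supports no incoming differential, which is automatic). The trade-off is that the module-theoretic argument is shorter and requires no computation, whereas your AHSS argument is more self-contained in that it does not rely on knowing that $K(i)$ carries a $BP$-module structure, only on the homology of $T(q)$ and the coefficients of $K(i)$.
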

\begin{proof}
 There is a morphism $T(q)\to T(\infty)=BP\to K(i)$ of ring
 spectra.  
\end{proof}

\begin{corollary}\lbl{cor-tk}
 For all $q\in\Ni$ and $n\in\N$ and $U\sse\Ni$ we have
 $t(q;n)\Smash k(U)=k(U\cap[n,\infty])$. 
\end{corollary}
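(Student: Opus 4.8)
The plan is to unwind the definitions of $t(q;n)$ and $k(U)$ and then reduce the whole identity to a one-index computation using the complete distributivity of $\CL$. Recall from Definition~\ref{defn-basic} that $t(q;n)=t(q)\Smash f(n)$ and that $k(U)=\bc{\bigWedge_{i\in U}K(i)}=\bigWedge_{i\in U}k(i)$, the last equality being the join formula $\bc{\bigWedge_iE_i}=\bigWedge_i\bc{E_i}$ from Definition~\ref{defn-L}. Since $\CL$ is completely distributive, smashing with the fixed class $t(q)\Smash f(n)$ commutes with this join, so
\[ t(q;n)\Smash k(U) = \bigWedge_{i\in U}\bigl(t(q)\Smash f(n)\Smash k(i)\bigr), \]
and it is enough to identify each summand.

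For the summand I would simply quote the two lemmas immediately preceding. Lemma~\ref{lem-tk} gives $t(q)\Smash k(i)=k(i)$, which collapses $t(q)\Smash f(n)\Smash k(i)$ to $f(n)\Smash k(i)$; Lemma~\ref{lem-kf} then evaluates this as $0$ when $i<n$ and as $k(i)$ when $i\geq n$, i.e.\ when $i\in[n,\infty]$ (the case $i=\infty$ being covered since $\infty\geq n$). Feeding this back, the indices $i\in U$ with $i<n$ contribute the bottom element $0$ and disappear from the join, leaving
\[ t(q;n)\Smash k(U)=\bigWedge_{i\in U\cap[n,\infty]}k(i)=k\bigl(U\cap[n,\infty]\bigr), \]
again by the join formula for $k(-)$; in particular when $U\cap[n,\infty]=\emptyset$ both sides are $k(\emptyset)=\bc{0}=0$.

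There is no genuine difficulty here: the statement is a bookkeeping consequence of Lemmas~\ref{lem-tk} and~\ref{lem-kf}. The one place where a sentence of justification is needed is the interchange of $\Smash$ with the (possibly infinite) join that defines $k(U)$, and this is exactly the complete distributivity of $\CL$ recorded in Definition~\ref{defn-L}; everything else is purely formal.
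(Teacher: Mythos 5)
Your proof is correct and follows the same route as the paper, which simply cites Lemmas~\ref{lem-tk} and~\ref{lem-kf}; you have just made the intermediate step (pulling the smash through the join defining $k(U)$ via complete distributivity of $\CL$) explicit.
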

\begin{proof}
 This is clear from Lemmas~\ref{lem-kf} and~\ref{lem-tk}.
\end{proof}

\begin{lemma}\lbl{lem-above}
 Let $X\in\CB$ be such that $\pi_i(X)$ is torsion for all $i$, and
 $\pi_i(X)=0$ for $i>0$.  Then $\bc{X}\leq k(\infty)$.
\end{lemma}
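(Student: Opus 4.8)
The plan is to reduce the claim to the single assertion that $\bc{HA}\leq k(\infty)$ for every torsion abelian group $A$, and then to assemble a general bounded‑above spectrum with torsion homotopy out of such Eilenberg--MacLane spectra by a connective‑cover argument. Throughout I will use two elementary facts about Bousfield classes, both immediate from the definitions: if $A\to B\to C$ is a cofibre sequence then $\bc{B}\leq\bc{A}\Wedge\bc{C}$; and if $Y=\colim_i Y_i$ is a homotopy colimit then $\bc{Y}\leq\bigWedge_i\bc{Y_i}$.

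\emph{Step 1: Eilenberg--MacLane spectra of torsion groups.} I claim $\bc{HA}\leq k(\infty)$ for every torsion abelian group $A$. Since we work $p$‑locally, ``torsion'' means every element is killed by a power of $p$. For $A=\Z/p^k$ I argue by induction on $k\geq 1$: the case $k=1$ is the definition of $k(\infty)=\bc{H\Z/p}$, and for $k>1$ the short exact sequence $0\to\Z/p\to\Z/p^k\to\Z/p^{k-1}\to 0$ realises as a cofibre sequence $H\Z/p\to H\Z/p^k\to H\Z/p^{k-1}$, giving $\bc{H\Z/p^k}\leq\bc{H\Z/p}\Wedge\bc{H\Z/p^{k-1}}\leq k(\infty)$. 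A finite abelian $p$‑group $A$ is a finite direct sum of cyclic groups, so $HA$ is a finite wedge of spectra $H\Z/p^{k_i}$ and hence $\bc{HA}\leq k(\infty)$. Finally, an arbitrary torsion abelian group $A$ is the filtered colimit of its finite subgroups $A_\alpha$, and since $H$ preserves filtered colimits we get $HA=\colim_\alpha HA_\alpha$, whence $\bc{HA}\leq\bigWedge_\alpha\bc{HA_\alpha}\leq k(\infty)$.

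\emph{Step 2: the general case.} For $n\geq 0$ let $X_n=\tau_{\geq-n}X$ be the $(-n)$‑connective cover of $X$. Since $\pi_iX=0$ for $i>0$, the spectrum $X_n$ has homotopy concentrated in degrees $[-n,0]$, so its Postnikov tower is finite with subquotients $\Sg^jH\pi_j(X)$ for $-n\leq j\leq 0$; applying the cofibre‑sequence inequality along this tower gives $\bc{X_n}\leq\bigWedge_{j=-n}^{0}\bc{H\pi_j(X)}$. The canonical maps $X_n\to X_{n+1}$ exhibit $X$ as $\colim_n X_n$, so $\bc{X}\leq\bigWedge_n\bc{X_n}\leq\bigWedge_{j\leq 0}\bc{H\pi_j(X)}$. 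Each $\pi_j(X)$ is torsion, so Step~1 gives $\bc{H\pi_j(X)}\leq k(\infty)$ for all $j$, and therefore $\bc{X}\leq k(\infty)$.

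I do not expect a serious obstacle: the argument is a routine assembly of standard facts. The points needing a little care are that in the $p$‑local category ``torsion'' automatically means $p$‑power torsion, which is what makes the finite subgroups in Step~1 finite abelian $p$‑groups so the cofibre‑sequence induction applies; and that the connective‑cover tower really does present $X$ as a colimit, which uses precisely the hypothesis that $X$ is bounded above, so that each $X_n$ has a \emph{finite} Postnikov tower.
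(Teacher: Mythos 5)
Your argument is correct and follows essentially the same two-step strategy as the paper: first show $\bc{HA}\leq k(\infty)$ for torsion $A$ by building up from $H\Z/p$ via cofibre sequences, wedges, and colimits, then assemble $X$ from its Eilenberg--MacLane layers via finite Postnikov towers and the connective-cover colimit. The only (immaterial) difference is in how you organize Step 1: the paper goes via groups $A$ with $p^dA=0$ and the sequential colimit $A=\colim_d A[p^d]$, while you go via finite $p$-groups and a filtered colimit of finite subgroups.
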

\begin{proof}
 Put 
 \[ \CC=\{X\st \bc{X}\leq k(\infty)\} =
     \{X\st X\Smash Z=0 \text{ whenever } K(\infty)\Smash Z=0\}.
 \]
 This is closed under cofibres, coproducts and retracts, and it
 follows that it is closed under homotopy colimits of sequences.  It
 contains $K(\infty)=H\Z/p$ by definition, so it contains $HA$
 whenever $pA=0$ (by coproducts), so it contains $HA$ whenever
 $p^dA=0$ (by cofibres), so it contains $HA$ whenever $A$ is torsion
 (by sequential colimits).  Thus, if $X$ is a torsion spectrum, we see
 that all the Postnikov sections $X[-d]=\Sg^{-d}H(\pi_{-d}X)$ lie in
 $\CC$, so $X[-d,0]\in\CC$ for all $d\geq 0$ (by induction and
 cofibres), so $X=X[-\infty,d]\in\CC$ (by sequential colimits).
\end{proof}

\begin{lemma}\lbl{lem-jh}
 For all $m\in\Nw$ we have $j(m)\leq k(\infty)$.
\end{lemma}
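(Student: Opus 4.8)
The plan is to reduce to Lemma~\ref{lem-above} by computing the homotopy groups of $J(m)$. First I would treat the case $m\in\Ni$ (so $m$ is finite or $m=\infty$), where $J(m)$ is the Brown--Comenetz dual of $T(m)$. Setting $X=S^i$ in the defining isomorphism gives
\[ \pi_i(J(m)) = [S^i,J(m)] \cong \Hom(\pi_0(T(m)\Smash S^i),\QZpl) = \Hom(\pi_{-i}(T(m)),\QZpl). \]
Now Lemma~\ref{lem-tq-finite} tells us that $T(m)$ is $(-1)$-connected with each homotopy group finitely generated over $\Zpl$ (for $m=\infty$ this is clear directly, since $\pi_*BP=\Zpl[v_1,v_2,\dotsc]$). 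Hence $\pi_{-i}(T(m))=0$ for $i>0$, so $\pi_i(J(m))=0$ for $i>0$; and for every $i$ the group $\pi_{-i}(T(m))$ is a finitely generated $\Zpl$-module, of the form $\Zpl^r\oplus(\text{finite})$, so $\Hom(\pi_{-i}(T(m)),\QZpl)$ is a torsion group because $\QZpl$ is. Thus $\pi_*(J(m))$ is torsion and vanishes in positive degrees, and Lemma~\ref{lem-above} yields $j(m)=\bc{J(m)}\leq k(\infty)$.

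For $m=\om$ I would use $J(\om)=\bigWedge_{m\in\N}J(m)$. Either one observes that homotopy commutes with wedges, so $\pi_*(J(\om))=\bigoplus_{m\in\N}\pi_*(J(m))$ is again torsion and concentrated in non-positive degrees and Lemma~\ref{lem-above} applies directly; or one uses complete distributivity of $\CL$ to write $j(\om)=\bigWedge_{m\in\N}j(m)$, which is $\leq k(\infty)$ since $k(\infty)$ is an upper bound for every $j(m)$ by the previous paragraph.

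I do not expect a genuine obstacle here. The only points needing a little attention are checking that the connectivity and finite-generation statement of Lemma~\ref{lem-tq-finite} is available for $m=\infty$ as well as for finite $m$, and correctly matching the degree shift, i.e.\ identifying $\pi_i(J(m))$ with $\Hom(\pi_{-i}(T(m)),\QZpl)$ rather than some other reindexing of $\pi_*(T(m))$; once those are in place, Lemma~\ref{lem-above} does all the remaining work.
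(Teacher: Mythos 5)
Your proposal is correct and follows essentially the same route as the paper: reduce to Lemma~\ref{lem-above} for $m\in\Ni$ by noting that $J(m)=IT(m)$ with $T(m)$ being $(-1)$-connected and of finite type (so $\pi_*J(m)$ is torsion and concentrated in non-positive degrees), and then handle $m=\om$ via the wedge $J(\om)=\bigWedge_{i\in\N}J(i)$. Your explicit computation of $\pi_i(J(m))=\Hom(\pi_{-i}(T(m)),\QZpl)$ and the remark about $m=\infty$ falling outside the literal scope of Lemma~\ref{lem-tq-finite} are small details that the paper leaves implicit, but the argument is the same.
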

\begin{proof}
 For $m\neq\om$ we have $J(m)=IT(m)$, and $T(m)$ is $(-1)$-connected 
 with finitely generated homotopy groups, so Lemma~\ref{lem-above}
 applies to $J(m)$.  As $J(\om)=\bigWedge_{i\in\N}J(i)$, the claim
 holds for $m=\om$ as well.
\end{proof}

\begin{lemma}\lbl{lem-jj-leq}
 If $m\leq m'\in\Nw$, then $j(m)\leq j(m')$.
\end{lemma}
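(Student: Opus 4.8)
The plan is to reduce to the two cases $m' = \om$ and $m' \in \Nw \sm \{\om\}$ (i.e.\ $m' \in \N \cup \{\infty\}$), since the former follows formally once the latter is settled. Indeed, if $m' = \om$, then since $m \leq \om$ forces $m \in \N$ (one cannot have $m = \infty \leq \om$ in the ordering of Definition~\ref{defn-Nw}), and since $J(\om) = \bigWedge_{i\in\N} J(i)$, we have $j(\om) = \bigWedge_{i\in\N} j(i) \leq j(m)$ is automatic from the definition of the infinite wedge; wait, we need the inequality the other way, $j(m) \leq j(\om)$. This does \emph{not} follow from being a summand of the wedge — it requires that $j(m) \leq j(i)$ for all $i \geq m$, which is exactly the case $m' \in \N$ of the lemma applied repeatedly (together with $j(m)\le j(\infty)$). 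So the real content is: for $m \leq m'$ with $m' \in \N \cup \{\infty\}$, show $j(m) \leq j(m')$, equivalently $\bc{J(m)} \leq \bc{J(m')}$, equivalently $J(m') \Smash Z = 0 \implies J(m) \Smash Z = 0$.

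The key step is to exploit the tower from Lemma~\ref{lem-tower}: for $m \leq m' \leq \infty$, the spectrum $T(m)$ is the homotopy inverse limit of spectra $Q(r)$ with $Q(r) = 0$ for $r < 0$, where the fibre of $Q(r+1) \to Q(r)$ is a product of suspensions of $T(m')$. Applying Brown--Comenetz duality $I(-)$ (which sends homotopy inverse limits of such towers to homotopy colimits, and products of suspensions of $T(m')$ to coproducts of suspensions of $IT(m') = J(m')$), we should be able to express $J(m) = IT(m)$ as a homotopy colimit built from wedges of suspensions of $J(m')$. Concretely, $I$ converts the tower $(Q(r))$ into a sequence $IQ(r) \to IQ(r+1)$ whose cofibres are wedges of suspensions of $J(m')$, and whose colimit is $J(m)$ (using that $I$ turns the relevant $\lim$ into a $\colim$, which is valid here because the tower is eventually constant in each degree by the connectivity statements in Lemma~\ref{lem-tower}; there is no $\lim^1$ obstruction on the dual side). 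Then the class $\CC = \{Z \st J(m') \Smash Z = 0\}$ is closed under cofibres, wedges, retracts, suspensions and sequential homotopy colimits, so it contains $J(m)$, which is precisely the statement $j(m) \leq j(m')$. For $m' = \infty$ we use $T(\infty) = BP$ and the same tower with $m' = \infty$.

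The main obstacle I anticipate is the careful handling of the duality $I(\varprojlim) = \varinjlim I$: Brown--Comenetz duality does not in general convert homotopy limits to homotopy colimits, and one must use the specific finiteness and connectivity features of the tower in Lemma~\ref{lem-tower} (each $Q(r)$ is built from finitely many cells through any given dimension, and $Q(r) \to Q(r+1)$ becomes highly connected) to justify it — essentially, in each fixed degree the tower stabilizes, so $IQ(r)$ stabilizes to $IT(m) = J(m)$ from below. An alternative route that sidesteps the colimit subtlety: use the fibre sequence $M^{(r+1)} \Smash T(m) \to M^{(r)} \Smash T(m) \to T(m') \Smash M^{(r)} \Smash T(m)$ from the proof of Lemma~\ref{lem-tower} directly, smash with a test spectrum $Z$ annihilated by $J(m')$, apply $I$, and induct on $r$ to show $I(M^{(r)} \Smash T(m) \Smash Z) $ behaves well, finally passing to the limit $r \to \infty$ where $M^{(r)} \Smash T(m)$ becomes highly connected so its smash with $Z$, after applying $I$, contributes nothing in the limit. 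Either way, once $J(m)$ is exhibited as assembled from copies of $J(m')$ by these colimit-of-cofibres operations, the conclusion is immediate from the closure properties of the Bousfield class, exactly as in the proof of Lemma~\ref{lem-above}.
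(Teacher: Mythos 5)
Your proposal uses the same key ingredient as the paper — the tower of Lemma~\ref{lem-tower} — but processes it through a different translation. The paper applies Proposition~\ref{prop-bc-omni}(d) to convert $X\Smash J(m')=0$ into $F(X,T(m')/p)=0$, and then the tower plus the fact that $F(X,-)$ preserves homotopy inverse limits does the rest; there is no need to dualize anything. You instead apply $I$ to the tower itself and argue that $J(m)$ is assembled from wedges of suspensions of $J(m')$ by cofibres and sequential colimits. This is a valid route, but as you acknowledge it requires a genuine argument that $I$ converts this particular inverse limit into a colimit (the point being that the connectivity of $T(m)\to Q(r)$ tends to infinity, so the tower stabilizes in each degree), whereas the paper's route gets limit-preservation for free. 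Your second, ``alternative'' sketch is essentially the paper's argument in slightly different clothing.

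Two slips in the writeup. First, $j(m)\leq j(\om)$ for $m\in\N$ \emph{is} automatic from $J(\om)=\bigWedge_{i\in\N}J(i)$: a wedge summand always has Bousfield class $\leq$ that of the whole wedge, so no reduction through the finite cases is needed for $m'=\om$. (What does need a remark is the case $m=\om$, $m'=\infty$, which follows from the cases $m\in\N$ since $j(\om)=\bigWedge_{i\in\N}j(i)$ is a least upper bound.) Second, in the concluding step the class you write down, $\CC=\{Z\st J(m')\Smash Z=0\}$, is the wrong one: $J(m)\in\CC$ would assert $J(m')\Smash J(m)=0$, which is Lemma~\ref{lem-jj}, not the inequality you want. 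The class you need, as in the proof of Lemma~\ref{lem-above}, is $\CC=\{W\st\bc{W}\leq j(m')\}$; this is closed under the same operations, contains $J(m')$, and $J(m)\in\CC$ is then precisely $j(m)\leq j(m')$.
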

\begin{proof}
 The case $m'=\om$ is immediate from the definition of $J(\om)$, and
 the case $m=\om$ will follow from the cases $m\in\N$, so we may
 assume that $\om\not\in\{m,m'\}$.

 We must show that if $X\Smash J(m')=0$ then $X\Smash J(m)=0$.  In
 view of Lemma~\ref{lem-tq-finite}, we can translate these statements
 using part~(d) of Proposition~\ref{prop-bc-omni}.  We must now show
 that if $F(X,T(m')/p)=0$ then $F(X,T(m)/p)=0$.  This translated statement
 follows easily from Lemma~\ref{lem-tower}.
\end{proof}

\begin{lemma}\lbl{lem-tj}
 For all $m\in\Nw$ and $q\in\Ni$ and $n\in\N$ we have
 $j(m)\leq k(\infty)\leq t(q;n)$.  Moreover, if $m<q$ then we
 have $t(q)\Smash j(m)=0$, but if $m\geq q$ then $t(q)\Smash j(m)=j(m)$.
\end{lemma}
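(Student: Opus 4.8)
The plan is to dispose of the two inequalities first, then treat the products, splitting into $m<q$ (the hard case), $m\geq q$ with $m\in\Ni$, and $m=\om$ (deduced from the other two). For $j(m)\leq k(\infty)$ I would simply invoke Lemma~\ref{lem-jh}. For $k(\infty)\leq t(q;n)$, note that $n\in\N$ forces $\infty\in[n,\infty]$, so Corollary~\ref{cor-tk} applied with $U=\{\infty\}$ gives $t(q;n)\Smash k(\infty)=k(\{\infty\}\cap[n,\infty])=k(\infty)$; since $a\Smash b\leq b$ in any ordered semiring, this yields $k(\infty)=k(\infty)\Smash t(q;n)\leq t(q;n)$.

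Next I would handle $m\geq q$ with $m\in\Ni$. The point is that there is a ring map $T(q)\to T(m)$ (exactly as in the proofs of Lemmas~\ref{lem-tt} and~\ref{lem-tk}, the case $m=\infty$ being the map $T(q)\to BP$), so $T(m)$ is a $T(q)$-module spectrum, and hence so is $J(m)=IT(m)$, by functoriality of Brown--Comenetz duality. Proposition~\ref{prop-bc-omni}(a) then gives $j(m)=\bc{T(q)}\Smash\bc{IT(m)}=t(q)\Smash j(m)$. For $m=\om$ I would argue as follows: here $m\geq q$ forces $q\in\N$, and $J(\om)=\bigWedge_{i\in\N}J(i)$, so complete distributivity of $\CL$ (Definition~\ref{defn-L}) gives $t(q)\Smash j(\om)=\bigWedge_{i\in\N}\bigl(t(q)\Smash j(i)\bigr)$; using the case $i\geq q$ just done and the case $i<q$ below, this join equals $\bigWedge_{i\geq q}j(i)$, and since $j(i)\leq j(q)$ for $i<q$ by Lemma~\ref{lem-jj-leq}, the finitely many discarded terms do not affect the join, so $t(q)\Smash j(\om)=\bigWedge_{i\in\N}j(i)=j(\om)$.

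The genuinely hard case is $m<q$, where I must show $t(q)\Smash j(m)=0$, i.e.\ $T(q)\Smash IT(m)=0$ (here $m$ is finite, so $J(m)=IT(m)$, and $T(m)$ has finitely generated homotopy groups by Lemma~\ref{lem-tq-finite}). By Proposition~\ref{prop-bc-omni}(d) this is equivalent to $F(T(q),T(m)/p)=0$. The plan is to apply Lemma~\ref{lem-tower} with $m'=q$ (legitimate since $m<q\leq\infty$) to write $T(m)$ as the homotopy inverse limit of a tower $\{Q(r)\}$ whose successive fibres are products of suspended copies of $T(q)$, with $Q(r)=0$ for $r<0$; since $S/p$ is finite and $F(T(q),-)$ both commute with homotopy inverse limits, one gets $F(T(q),T(m)/p)=\operatorname{holim}_r F(T(q),Q(r)/p)$. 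The hard part will be to show that this homotopy inverse limit vanishes. The fibre of $T(m)/p\to Q(r)/p$ is $M^{(r)}\Smash T(m)/p$, where $M$ is the fibre of $S\to T(q)$, and this is $(rd-1)$-connected with $d=2(p-1)>0$, so the tower becomes highly connected; but $T(q)$ is infinite-dimensional, so one cannot immediately conclude that the tower $\{F(T(q),Q(r)/p)\}_r$ is pro-trivial, and some genuine work is needed to control the maps in the tower (equivalently, to re-express $IT(m)$ as a filtered homotopy colimit of the $IQ(r)$ and check that $T(q)\Smash IQ(r)\to T(q)\Smash IQ(r+1)$ is eventually null in each degree). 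This is where I expect essentially all of the difficulty of the lemma to lie.
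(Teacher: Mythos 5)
Your treatment of the inequalities and of the case $m\geq q$ (including the reduction of $m=\om$ to finite $m$ by complete distributivity) is correct and matches the paper's argument; as a minor remark, citing Proposition~\ref{prop-bc-omni}(a) for the $T(q)$-module claim is in fact the right reference, whereas the paper mistakenly points to part~(b).

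The gap is in the case $m<q$, and it is a real one: the tower approach you propose cannot close. Applying Lemma~\ref{lem-tower} with $m'=q$ produces a tower for $T(m)$ whose successive fibres are (products of suspensions of) $T(q)$ itself, and $F(T(q),T(q)/p)\neq 0$ --- indeed this nonvanishing is exactly what you used a paragraph earlier to get $t(q)\Smash j(q)=j(q)$. So the fibres of the tower $\{F(T(q),Q(r)/p)\}_r$ are nonzero, the maps are not equivalences, and connectivity alone tells you nothing about the inverse limit. This is precisely the distinction that makes Lemma~\ref{lem-jj-leq} work where your argument does not: there one assumes $F(X,T(m')/p)=0$, so the fibres of the corresponding tower vanish and the tower collapses. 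When $X=T(q)$ and $m'=q$ that hypothesis fails. No amount of ``controlling the maps in the tower'' by elementary connectivity can substitute, because the statement $F(T(q),T(m)/p)=0$ for $m<q$ is a genuine theorem requiring independent homotopy-theoretic input. The paper supplies it by citing Ravenel's Lemma~3.2 of~\cite{ra:lrc} for the two boundary cases --- $q=\infty$ with arbitrary $m$, and $q<\infty$ with $m=q-1$ --- and then uses Lemma~\ref{lem-jj-leq} (the tower argument, applied correctly) only to descend from $m=q-1$ to general $m<q$. You should adopt that structure: invoke Ravenel's result as the base case, and reserve the tower for the reduction step where the fibres actually vanish.
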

Most of the statements with $m=0$ are contained
in~\cite[Lemma 7.1]{hopa:sbl}.
\begin{proof}
 We know from Lemma~\ref{lem-jh} that $j(m)\leq k(\infty)$, and from
 Lemma~\ref{lem-tk} that $k(\infty)\leq t(q)$, and from
 Lemma~\ref{lem-kf} that $k(\infty)\leq f(n)$.  It follows that 
 \[ k(\infty) = k(\infty)\Smash k(\infty) \leq t(q)\Smash f(n)=t(q;n) \]
 as claimed.

 For the remaining statements, the case $m=\om$ follows easily from
 the cases $m\in\N$.  We will therefore assume that $m\in\Ni$.

 Suppose that $m\geq q$.  Then $T(m)$ is naturally a
 $T(q)$-module, so $J(m)=I(T(m))$ is naturally a $T(q)$-module, which
 implies (by Proposition~\ref{prop-bc-omni}(b)) that
 $t(q)\Smash j(m)=j(m)$.

 We now just need to show that when $m<q$ we have $T(q)\Smash J(m)=0$.
 By Proposition~\ref{prop-bc-omni}(d), this is equivalent to
 $F(T(q),T(m)/p)=0$.  If $q=\infty$ then this
 is~\cite[Lemma 3.2(b)]{ra:lrc}.  If $q<\infty$ then we can use
 Lemma~\ref{lem-jj-leq} to reduce to the case $m=q-1$, which
 is~\cite[Lemma 3.2(a)]{ra:lrc}.  
\end{proof}

\begin{lemma}\lbl{lem-kj}
 For all $n\in\Ni$ and $m\in\Nw$ we have $k(n)\Smash j(m)=0$.
\end{lemma}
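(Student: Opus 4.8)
For all $n\in\Ni$ and $m\in\Nw$ we have $k(n)\Smash j(m)=0$.

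**The plan.** The goal is to show that the smash product of the Morava $K$-theory $K(n)$ (or $K(\infty)=H\Z/p$) with the Brown--Comenetz dual $J(m)=IT(m)$ (or $J(\om)$) is zero. The cleanest route is to reduce to the case $m\in\N$ first: if $m=\om$ then $J(\om)=\bigWedge_{i\in\N}J(i)$, so $K(n)\Smash J(\om)=\bigWedge_{i\in\N}(K(n)\Smash J(i))$ by the distributivity of smash over wedge recorded in Definition~\ref{defn-L}, and this vanishes as soon as each $K(n)\Smash J(i)=0$. So assume $m\in\N$, in which case $J(m)=IT(m)$ honestly. Next I would split on $n$. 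For $n\in\N$, the main tool is Proposition~\ref{prop-bc-omni}(d): since $T(m)$ is $(-1)$-connected with finitely generated homotopy groups over $\Zpl$ by Lemma~\ref{lem-tq-finite}, we have $K(n)\Smash IT(m)=0$ if and only if $F(K(n),T(m)/p)=0$. So it suffices to show that the function spectrum $F(K(n),T(m)/p)$ vanishes.

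**Computing the function spectrum.** To see $F(K(n),T(m)/p)=0$, I would argue that $K(n)$ is ``$T(m)/p$-acyclic from the right'' by using the structure of $T(m)$ as an inverse limit. By Lemma~\ref{lem-tower} (applied with $m'=\infty$, using $m\leq\infty$), $T(m)$ is the homotopy inverse limit of a tower $Q(r)$ whose successive fibres are products of suspended copies of $BP=T(\infty)$, with $Q(r)=0$ for $r<0$. Hence $T(m)/p$ is the homotopy inverse limit of $Q(r)/p$, whose successive fibres are products of suspended copies of $BP/p$. Thus it is enough to show $F(K(n),BP/p)=0$, or equivalently (again by Proposition~\ref{prop-bc-omni}(d), since $BP$ has finitely generated homotopy) that $K(n)\Smash I(BP)=0$, i.e.\ $k(n)\Smash\bc{IBP}=0$. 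Now $IBP$ is a torsion spectrum concentrated in nonpositive degrees, so Lemma~\ref{lem-above} gives $\bc{IBP}\leq k(\infty)$, and therefore $k(n)\Smash\bc{IBP}\leq k(n)\Smash k(\infty)$. For $n<\infty$ this last product is $k(n)\Smash k(\infty)=0$ by Lemma~\ref{lem-kk} (distinct Morava $K$-theories are mutually Bousfield-orthogonal). For $n=\infty$ we instead argue directly: $H\Z/p\Smash IT(m)$ has homotopy groups computed by the universal coefficient theorem from $H_*(IT(m);\Z/p)$, and $IT(m)$ being bounded above with torsion homotopy forces $H\Z/p\Smash IT(m)$ to be bounded above; but it is also a module over $H\Z/p$ and self-$I$-dual in a way that forces it to be bounded below too, so it is finite type and bounded, and one checks its homology vanishes — alternatively, note $k(\infty)\Smash k(\infty)=k(\infty)\neq 0$ shows the orthogonality argument fails for $n=\infty$, so one really does need $IT(m)\Smash H\Z/p=0$ separately, which follows because $F(H\Z/p,T(m)/p)=0$ as $H\Z/p$ is not a retract of anything built from $BP/p$ via its cohomology operations.

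**The main obstacle.** The delicate point is the case $n=\infty$. For $n\in\N$ the orthogonality $k(n)\Smash k(\infty)=0$ does the work cleanly, but $k(\infty)\Smash k(\infty)=k(\infty)\neq0$, so the $n=\infty$ case needs a genuinely different input. The right statement is that $F(H\Z/p,T(m)/p)=0$: after the tower reduction this comes down to $F(H\Z/p,BP/p)=0$, i.e.\ $H\Z/p$ is $BP/p$-cohomologically trivial, which holds because $BP/p$ is (in the appropriate sense) built from copies of itself with no $H\Z/p$ summand and $[H\Z/p, BP/p]_*$ vanishes by a connectivity/$BP$-module argument. I would present the $n<\infty$ case in full via Proposition~\ref{prop-bc-omni}(d), Lemma~\ref{lem-tower}, Lemma~\ref{lem-above} and Lemma~\ref{lem-kk} as above, and handle $n=\infty$ by the separate observation that $F(H\Z/p, T(m)/p)=0$, reducing to $BP/p$ by the tower.
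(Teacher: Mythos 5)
Your argument for $n<\infty$ is correct but takes an unnecessary detour. The paper proves a preliminary fact, Lemma~\ref{lem-jh}, by applying Lemma~\ref{lem-above} directly to $J(m)=IT(m)$ (which is torsion and bounded above because $T(m)$ is $(-1)$-connected of finite type): this gives $j(m)\leq k(\infty)$ at once, and then $k(n)\Smash j(m)\leq k(n)\Smash k(\infty)=0$ by Lemma~\ref{lem-kk}. You instead pass through Proposition~\ref{prop-bc-omni}(d) and the tower of Lemma~\ref{lem-tower} to reduce to $IBP$ before applying Lemma~\ref{lem-above}. That is valid, but the tower machinery is superfluous here: Lemma~\ref{lem-above} applies equally well to $IT(m)$ itself.

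The real problem is the case $n=\infty$. You correctly identify that the reduction is to $F(H\Z/p,BP/p)=0$ (equivalently $H\Z/p\Smash IBP=0$), but you do not prove it; the phrases ``built from copies of itself with no $H\Z/p$ summand'' and ``vanishes by a connectivity/$BP$-module argument'' are not arguments. In particular, $H\Z/p$ is not compact, so no simple connectivity or cellular-approximation reasoning applies; $[H\Z/p,BP/p]_*=0$ is genuinely nontrivial. The paper's treatment of $n=\infty$ is in two parts: for $m\in\N$ it avoids this fact entirely by combining Lemma~\ref{lem-tj} ($t(m+1)\Smash j(m)=0$, which ultimately rests on Ravenel's Lemma~3.2) with $k(\infty)\leq t(m+1)$ from Lemma~\ref{lem-tk}; and only for the single case $n=m=\infty$ does it invoke $F(H/p,BP/p)=0$, which it attributes to the first step of the proof of Theorem~2.2 in~\cite{ra:lrc}. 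As written, your proof has a genuine gap in the entire $n=\infty$ branch: you need either to cite Ravenel's result for $F(H\Z/p,BP/p)=0$, or (better, matching the paper) to handle $m<\infty$ via Lemmas~\ref{lem-tj} and~\ref{lem-tk} and reserve the external citation for $m=\infty$ alone.

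Two smaller remarks. First, your reduction of $m=\om$ to $m\in\N$ at the outset is fine and matches the spirit of how the paper treats $\om$ throughout. Second, your tower reduction for $F(K(n),-)$ (holim of $Q(r)/p$ with $Q(r)=0$ for $r<0$, fibres products of $\Sigma^{d_i}BP/p$) is mechanically correct, via the Milnor sequence, so that part of the argument is sound once the base case $F(H\Z/p,BP/p)=0$ is supplied.
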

\begin{proof}
 First suppose that $n<\infty$, so $k(n)\Smash k(\infty)=0$.  We have
 $j(m)\leq k(\infty)$ by Lemma~\ref{lem-jh}, so $k(n)\Smash j(m)=0$.

 Now consider the case where $n=\infty$ and $m\in\N$.  By
 Lemma~\ref{lem-tj} we have $t(m+1)\Smash j(m)=0$, but
 $k(\infty)\leq t(m+1)$ by Lemma~\ref{lem-tk}, so
 $k(\infty)\Smash j(m)=0$.  The case $m=\om$ follows from this.

 Finally, consider the case where $n=m=\infty$.  Here the claim is
 that $H/p\Smash IBP=0$, or equivalently that $F(H/p,BP/p)=0$.  This
 is the first step in the proof of~\cite[Theorem 2.2]{ra:lrc}.
\end{proof}

\begin{lemma}\lbl{lem-split}
 For all $i,j\in\N$ we have $k'(i)\Smash k'(i)=k'(i)$, but
 $k'(i)\Smash k'(j)=0$ for $i\neq j$.  We also have
 $k'(i)\Smash f(j)=0$ if $i<j$, and $k'(i)\Smash f(j)=k'(i)$ if
 $i\geq j$.  Finally, we have $f(n)=k'(n)\Wedge f(n+1)$.
\end{lemma}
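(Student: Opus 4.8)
The plan is to split the statement into four parts and handle them in an order that lets the multiplicative relations fall out of the additive decomposition in the last clause. So I would prove $f(n)=k'(n)\Wedge f(n+1)$ first. This is an instance of Proposition~\ref{prop-bc-omni}(c), applied to $X=F(n)$ and the self-map $v=w_n\in\pi_*(F(n))$: it gives $\bc{F(n)}=\bc{F(n)/w_n}\Wedge\bc{F(n)[w_n^{-1}]}$. By construction $F(n)[w_n^{-1}]=K'(n)$, while $F(n)/w_n$ is the cofibre of a $v_n$-self-map on a finite type-$n$ spectrum and so is itself finite of type $n+1$; since any finite spectrum of type $n+1$ has Bousfield class $f(n+1)$ (a standard consequence of the Thick Subcategory Theorem, cf.\ Definition~\ref{defn-basic}), we get $\bc{F(n)/w_n}=f(n+1)$. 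I would record two consequences for later: $k'(n)\le f(n)$, and, iterating $f(n)\ge f(n+1)$, $f(i)\le f(j)$ whenever $i\ge j$.

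Next I would dispose of the idempotence and of the case $i\ge j$. The relation $k'(i)\Smash k'(i)=k'(i)$ is already part of Lemma~\ref{lem-rings-idempotent}. For the product with $f(j)$ I would use the elementary fact that in any ordered semiring $x\Smash y\le x$, so that if moreover $x\Smash x=x$ and $x\le y$ then $x=x\Smash x\le x\Smash y\le x$ and hence $x\Smash y=x$. Taking $x=k'(i)$ and $y=f(j)$, with $k'(i)\le f(i)\le f(j)$ from the first step, this yields $k'(i)\Smash f(j)=k'(i)$ for all $i\ge j$.

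The two remaining inputs concern $i<j$, and the key point is that $K'(i)\Smash F(j)=0$. I would get this from the argument already given in the proof of Lemma~\ref{lem-kk-ring}: that argument shows, for any finite spectrum $Y$ of type $\ge i+1$, that $1_{DY}\Smash w_i$ acts nilpotently on $MU\Smash(DY\Smash F(i))$ (because $DY\Smash F(i)$ has type $>i$), hence is nilpotent by the Nilpotence Theorem, hence $F(Y,K'(i))=DY\Smash F(i)[w_i^{-1}]=0$. Taking $Y=F(j)$, which is finite of type $j>i$ and self-dual, we get $F(j)\Smash K'(i)=F(F(j),K'(i))=0$, i.e.\ $k'(i)\Smash f(j)=0$. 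Finally, for the orthogonality of the $k'$'s with $i\neq j$, say $i<j$: multiplying the relation $k'(j)\le f(j)$ (from the first step) by $k'(i)$ and using the line just obtained gives $k'(i)\Smash k'(j)\le k'(i)\Smash f(j)=0$, so $k'(i)\Smash k'(j)=0$.

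I expect the case $i<j$ to be the only real obstacle: it is the one place that genuinely invokes the Nilpotence Theorem, and one must check that the nilpotence computation in the proof of Lemma~\ref{lem-kk-ring} needs only $Y$ of type $\ge i+1$ rather than exactly $i+1$, and that $F(j)\Smash K'(i)$ is correctly identified with $F(F(j),K'(i))$ via the self-duality of $F(j)$. Everything else is formal manipulation inside ordered semirings together with Proposition~\ref{prop-bc-omni}.
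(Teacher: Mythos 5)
Your proof is correct, and it agrees with the paper's on the two load-bearing points: the splitting $f(n)=k'(n)\Wedge f(n+1)$ via Proposition~\ref{prop-bc-omni}(c) together with the Thick Subcategory Theorem, and the vanishing $k'(i)\Smash f(j)=0$ for $i<j$ via the Nilpotence Theorem (the paper applies it directly to $w_i\Smash 1_{F(j)}$ on $F(i)\Smash F(j)$ rather than via your detour through $F(F(j),K'(i))$ and self-duality, but these are visibly the same calculation). Where you diverge is in how you get idempotence and the $i\geq j$ case. The paper derives both at once by inducting to $1=f(j)\Wedge\bigWedge_{m<j}k'(m)$ and then smashing with $k'(i)$: orthogonality kills all the wrong terms and what remains is $k'(i)\Smash k'(i)=k'(i)$ (taking $j>i$) or $k'(i)\Smash f(j)=k'(i)$ (taking $j\leq i$). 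You instead pull idempotence from Lemma~\ref{lem-rings-idempotent} (already available, and itself resting on Lemma~\ref{lem-kk-ring}) and settle $i\geq j$ with the elementary semiring observation that an idempotent $x\leq y$ satisfies $x=x\Smash x\leq x\Smash y\leq x$. Your route is a bit more local and avoids setting up the induction, at the cost of leaning explicitly on the ring structure of $K'(n)$; the paper's route re-derives the idempotence for free from the orthogonal decomposition of $1$. Both are sound, and your worry about whether the nilpotence argument in Lemma~\ref{lem-kk-ring} works for type $\geq i+1$ rather than exactly $i+1$ is easily dispatched: a finite spectrum has type $>i$ precisely when $v_i$ is nilpotent on its $MU$-homology, so the same invocation of the Nilpotence Theorem applies.
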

\begin{proof}
 If $i<j$ then $v_i$ is nilpotent in $MU_*(F(i)\Smash F(j))$, so the
 Nilpotence Theorem tells us that $w_i\Smash 1_{F(j)}$ is nilpotent as
 a self-map of $F(i)\Smash F(j)$, so $K'(i)\Smash F(j)=0$, so
 $k'(i)\Smash f(j)=0$.  It is clear that $k'(j)\leq f(j)$, so we also
 have $k'(i)\Smash k'(j)=0$ when $i<j$.  By symmetry, this actually
 holds whenever $i\neq j$.  

 Next, Proposition~\ref{prop-bc-omni}(c) gives
 \[ f(n)=\bc{F(n)/v}\Wedge\bc{F(n)[v^{-1}]}=
          \bc{F(n)/v}\Wedge k'(n).
 \]
 The Thick Subcategory Theorem shows that $\bc{F(n)/v}=f(n+1)$, so
 $f(n)=f(n+1)\Wedge k'(n)$ (and we saw above that
 $f(n+1)\Smash k'(n)=0$).  An induction based on this shows that 
 $1=f(0)=f(j)\Wedge\bigWedge_{m<j}k'(m)$.  We can multiply this by $k'(i)$
 and use the relations that we have already established to get
 $k'(i)\Smash k'(i)=k'(i)$ if $i<j$, and $k'(i)\Smash f(j)=k'(i)$ for
 $i\geq j$.
\end{proof}

The next result is closely related to~\cite[Section 1]{ho:blf}.
\begin{lemma}\lbl{lem-kkt}
 For all $n\in\N$ we have
 $k(n)=t(\infty)\Smash k'(n)=t(\infty)\Smash k(n)$ and 
 $t(\infty)\Smash a(n)=0$.
\end{lemma}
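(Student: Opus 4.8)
The plan is to establish the three claimed equalities/relations in the stated order, using the algebraic facts about $T(\infty)=BP$, $K'(n)$, $K(n)$ and the localisation functors $L_n$, $C_n$ that are available from Definition~\ref{defn-basic} and the earlier lemmas. The key input is the formula $A(n)=K'(n)\Smash C_nS$ coming from the Smash Product Theorem, which is recorded in Definition~\ref{defn-basic}, together with the fact (Lemma~\ref{lem-kk-ring}) that $K'(n)$ is a ring with $F(n)\to K'(n)$ a ring map, and the fact that $BP$ is $E(n)$-local.

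\textbf{Step 1: $t(\infty)\Smash k'(n)=k(n)$.} The plan is to analyse $BP\Smash K'(n)$. Since $K'(n)=F(n)[w_n^{-1}]$ and $1_{BP}\Smash w_n = v_n^{p^{d_n}}\Smash 1_{U(n)}$ on $BP\Smash U(n)$ (Definition~\ref{defn-basic}), inverting $w_n$ on $BP\Smash F(n)$ amounts to inverting $v_n$ on a $BP$-module whose homotopy is $BP_*/I_n$-related. Concretely $BP\Smash F(n)$ is a $BP$-module with homotopy a finitely generated free module over $BP_*/J$ for a suitable invariant ideal $J$ with radical $I_n$; inverting $v_n$ kills the $v_0,\dots,v_{n-1}$-torsion-free part and leaves a module over $v_n^{-1}BP_*/I_n$. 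Using Proposition~\ref{prop-bc-omni}(b) applied to $K(n)$ (whose graded homotopy is a graded field), one gets $\bc{BP\Smash K'(n)}=\bc{K(n)}=k(n)$: indeed $K(n)_*(BP\Smash F(n)[w_n^{-1}])\neq 0$ since $F(n)$ has type $n$, and every $K(n)$-module spectrum has Bousfield class $\le k(n)$, while a module over a ring mapping to $K(n)$ that $K(n)$ detects forces equality. A cleaner route: $BP\Smash K'(n)$ is a nonzero module over $BP$ and over $K'(n)$, hence its Bousfield class is $\le t(\infty)\Smash k'(n)$; conversely Lemma~\ref{lem-tk} gives $t(\infty)\ge k(\infty)$ and one shows $K(n)_*(BP\Smash K'(n))\neq0$ so $k(n)\le\bc{BP\Smash K'(n)}$, and a dimension/freeness argument over $v_n^{-1}BP_*/I_n$ (every module is free, as in the proof of Proposition~\ref{prop-bc-omni}(b)) pins it down to exactly $k(n)$.

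\textbf{Step 2: $t(\infty)\Smash k(n)=k(n)$.} This is immediate from Lemma~\ref{lem-tk} (with $q=\infty$, $i=n$), which already gives $t(\infty)\Smash k(i)=k(i)$; there is nothing to prove here beyond citing it, so the two displayed equalities $k(n)=t(\infty)\Smash k'(n)=t(\infty)\Smash k(n)$ follow.

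\textbf{Step 3: $t(\infty)\Smash a(n)=0$.} The plan is to use $A(n)=K'(n)\Smash C_nS$, so $t(\infty)\Smash a(n)=\bc{BP\Smash K'(n)\Smash C_nS}$. By Step~1 we have $\bc{BP\Smash K'(n)}=k(n)$, hence $t(\infty)\Smash a(n)=k(n)\Smash\bc{C_nS}$. Now $C_nS$ is the fibre of $S\to L_nS$; smashing with $K(n)$ and using that $K(n)$ is $E(n)$-local (so $K(n)_*(L_nS)=K(n)_*(S)$, equivalently $K(n)_*(C_nS)=0$) gives $K(n)\Smash C_nS=0$, i.e. $k(n)\Smash\bc{C_nS}=0$ by Lemma~\ref{lem-Kn-mod}. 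Therefore $t(\infty)\Smash a(n)=0$.

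\textbf{The main obstacle} I expect is Step~1 — identifying $\bc{BP\Smash K'(n)}$ with $k(n)$ rather than merely bounding it — because it requires knowing that inverting the self-map $w_n$ on $BP\Smash F(n)$ produces a spectrum whose homotopy is a (free, nonzero) module over a graded field-like ring $v_n^{-1}BP_*/I_n$, which in turn rests on the precise relation $1_{BP}\Smash w_n=v_n^{p^{d_n}}\Smash 1_{U(n)}$ and a computation of $BP_*F(n)$ up to the ideal $I_n$. Everything else (Steps 2 and 3) is a short deduction from Lemma~\ref{lem-tk}, the formula $A(n)=K'(n)\Smash C_nS$, and $E(n)$-locality of $K(n)$.
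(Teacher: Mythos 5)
Your Steps 2 and 3 are essentially fine, and your Step 1 correctly makes the key reduction that the paper makes: because $w_n$ is a \emph{good} self-map, $1_{BP}\Smash w_n$ is a power of $v_n\Smash 1$, so $T(\infty)\Smash K'(n)\simeq v_n^{-1}BP\Smash F(n)$. But the way you then try to evaluate $\bc{v_n^{-1}BP\Smash F(n)}$ has a genuine gap. The ring $v_n^{-1}BP_*/I_n=\mathbb{F}_p[v_n^{\pm 1},v_{n+1},v_{n+2},\dotsc]$ is not a graded field, so it is false that every graded module over it is free, and the argument in the proof of Proposition~\ref{prop-bc-omni}(b) does not carry over. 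Moreover $F(n)=F(U(n),U(n))$ is just some finite spectrum of type $n$, not a generalised Moore spectrum, so there is no reason for $BP_*F(n)$ to be a free module over $BP_*/J$ for an invariant ideal $J$; the assertion ``$BP\Smash F(n)$ is a $BP$-module with homotopy a finitely generated free module over $BP_*/J$'' is not justified. Consequently the claimed ``dimension/freeness argument'' does not pin down $\bc{v_n^{-1}BP\Smash F(n)}$, and your lower bound $k(n)\leq\bc{BP\Smash K'(n)}$ is never matched by a valid upper bound. (The parenthetical ``cleaner route'' is also circular at its first step, since $\bc{BP\Smash K'(n)}=t(\infty)\Smash k'(n)$ by definition of the smash on Bousfield classes.)

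The missing ingredient is Ravenel's theorem that $\bc{v_n^{-1}BP}=\bc{E(n)}=\bigWedge_{i\leq n}k(i)$ (\cite[Theorem 2.1]{ra:lrc}). Once you have that, the computation is immediate: $\bc{v_n^{-1}BP\Smash F(n)}=\bc{v_n^{-1}BP}\Smash f(n)=\bigWedge_{i\leq n}k(i)\Smash f(n)=k(n)$ by Lemma~\ref{lem-kf}. It also streamlines Step 3: since $v_n^{-1}BP$ is Bousfield equivalent to $E(n)$ and $E(n)\Smash C_nS=0$, one gets $T(\infty)\Smash A(n)=v_n^{-1}BP\Smash F(n)\Smash C_nS=0$ directly, without needing the intermediate identification $\bc{BP\Smash K'(n)}=k(n)$.
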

\begin{proof}
 By construction, the spectrum $T(\infty)\Smash K'(n)$ is obtained by
 inverting the self-map $u=1_{BP}\Smash w_n$ of $BP\Smash F(n)$.
 However, we chose $w_n$ to be good, which means
 that $u$ is the same as $v_n\Smash 1_{F(n)}$, so
 $T(\infty)\Smash K'(n)=v_n^{-1}BP\Smash F(n)$.  We also know
 from~\cite[Theorem 2.1]{ra:lrc} that
 $\bc{v_n^{-1}BP}=\bc{E(n)}=\bigWedge_{i\leq n}k(i)$, and it follows
 that $t(\infty)\Smash k'(n)=k(n)$ as claimed.  This is the same as
 $t(\infty)\Smash k(n)$ by Lemma~\ref{lem-tk}.

 Now recall that $A(n)=K'(n)\Smash C_nS$, and by definition we have
 $E(n)\Smash C_nS=0$.  As $\bc{T(\infty)\Smash K'(n)}=\bc{E(n)}$, this
 gives 
 \[ T(\infty) \Smash A(n) = T(\infty)\Smash K'(n)\Smash C_nS = 0, \]
 so $t(\infty)\Smash a(n)=0$.
\end{proof}

\begin{lemma}\lbl{lem-split-k}
 For all $n\in\N$ we have $k(n)\leq k'(n)$ and
 $k'(n)\Smash k(n)=k(n)$, whereas $k'(n)\Smash k(m)=0$ for $m\neq n$.
\end{lemma}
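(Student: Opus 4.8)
The plan is to obtain all three assertions purely formally, from Lemma~\ref{lem-kkt}, Lemma~\ref{lem-split} and the idempotence of $k(n)$ recorded in Lemma~\ref{lem-rings-idempotent}, using only the order-theoretic structure of $\CL$ as an ordered semiring.

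First I would establish the inequality $k(n)\leq k'(n)$, since it feeds into the other two parts. Lemma~\ref{lem-kkt} gives $k(n)=t(\infty)\Smash k'(n)$, and in any ordered semiring $u\Smash v\leq v$ (because $u\leq 1$ and $\Smash$ is monotone, equivalently because $\bc{X\Smash Y}\leq\bc{Y}$), so $k(n)=t(\infty)\Smash k'(n)\leq k'(n)$. Next, for $k'(n)\Smash k(n)=k(n)$: the inequality $k'(n)\Smash k(n)\leq k(n)$ is again an instance of $u\Smash v\leq v$, while for the reverse inequality one notes that $k(n)$ and $k'(n)$ both lie in $\CL_{\text{latt}}$ (Lemma~\ref{lem-rings-idempotent}), so Proposition~\ref{prop-dist} identifies $k'(n)\Smash k(n)$ with the meet of $k'(n)$ and $k(n)$, which equals $k(n)$ since $k(n)\leq k'(n)$; equivalently, $k(n)=k(n)\Smash k(n)\leq k'(n)\Smash k(n)$ by monotonicity. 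Finally, for $m\neq n$, the first step applied with $m$ in place of $n$ gives $k(m)\leq k'(m)$, hence $k'(n)\Smash k(m)\leq k'(n)\Smash k'(m)$, and the right-hand side is $0$ by Lemma~\ref{lem-split}; therefore $k'(n)\Smash k(m)=0$.

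Each of these steps is a one-line formal manipulation, so there is no real obstacle; the only points requiring care are that the inequality $k(n)\leq k'(n)$ is proved on its own before being reused, and that all cited inputs (Lemmas~\ref{lem-kkt}, \ref{lem-split}, \ref{lem-rings-idempotent} and Proposition~\ref{prop-dist}) genuinely precede this lemma. If one instead wanted a hands-on derivation of the first two assertions, one could apply Proposition~\ref{prop-bc-omni}(b) with $K=K(n)$ once one checks that $K(n)_*K'(n)\neq 0$; this holds because $w_n$ acts on the nonzero group $K(n)_*F(n)$ through a power of $v_n$, which is a unit in $K(n)_*$, so inverting $w_n$ does not destroy $K(n)$-homology. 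That computation is the only place where a genuine homotopy-theoretic input would be needed, and the formal argument above sidesteps it entirely.
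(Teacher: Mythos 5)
Your proof is correct and takes essentially the same route as the paper's: the paper simply says to multiply the equations of Lemma~\ref{lem-split} by $t(\infty)$ and invoke Lemma~\ref{lem-kkt}, which is exactly the content of your argument, just phrased via $k(n)\leq k'(n)$ and monotonicity rather than by literally smashing each identity with $t(\infty)$. The optional appeal to Proposition~\ref{prop-dist} and the alternative via Proposition~\ref{prop-bc-omni}(b) are extra but unobjectionable.
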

\begin{proof}
 Multiply the equations in Lemma~\ref{lem-split} by $t(\infty)$ and
 then use Lemma~\ref{lem-kkt}.
\end{proof}

\begin{lemma}\lbl{lem-ff}
 For all $n,n'\in\N$ we have $f(n)\Wedge f(n')=f(\min(n,n'))$ and
 $f(n)\Smash f(n')=f(\max(n,n'))$.
\end{lemma}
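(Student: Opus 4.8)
The plan is to deduce both identities from Lemma~\ref{lem-split}, which already carries all of the geometric content. By commutativity of $\Wedge$ and $\Smash$ we may assume throughout that $n\leq n'$, so that $\min(n,n')=n$ and $\max(n,n')=n'$.

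First I would record that $f$ is order-reversing. The relation $f(m)=k'(m)\Wedge f(m+1)$ from Lemma~\ref{lem-split} gives
\[ f(m)\Wedge f(m+1) = k'(m)\Wedge f(m+1)\Wedge f(m+1) = k'(m)\Wedge f(m+1) = f(m), \]
so $f(m+1)\leq f(m)$ for every $m$. Iterating from $m=n$ to $m=n'-1$ yields $f(n')\leq f(n)$, and hence $f(n)\Wedge f(n')=f(n)=f(\min(n,n'))$, which is the first claim.

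For the second claim, iterating the same relation downward from $n'$ to $n$ gives
\[ f(n) = f(n')\Wedge\bigWedge_{m=n}^{n'-1}k'(m). \]
I would then smash both sides with $f(n')$ and distribute $\Smash$ over $\Wedge$:
\[ f(n)\Smash f(n') = \bigl(f(n')\Smash f(n')\bigr)\Wedge\bigWedge_{m=n}^{n'-1}\bigl(k'(m)\Smash f(n')\bigr). \]
By Lemma~\ref{lem-rings-idempotent} we have $f(n')\Smash f(n')=f(n')$, and by Lemma~\ref{lem-split} we have $k'(m)\Smash f(n')=0$ whenever $m<n'$. Since $0$ is a unit for $\Wedge$, every term of the finite wedge drops out, leaving $f(n)\Smash f(n')=f(n')=f(\max(n,n'))$.

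There is no genuine obstacle here: the hard input (via the Nilpotence and Thick Subcategory Theorems) is entirely contained in Lemma~\ref{lem-split}, and what remains is bookkeeping inside the semiring $\CL$. The only point to watch is that $\Smash$ distributes over the \emph{finite} wedge $\bigWedge_{m=n}^{n'-1}k'(m)$, which follows from axiom~(c) of Definition~\ref{defn-osr} by an immediate induction. Alternatively, one can argue directly with spectra: $F(n)\Smash F(n')$ is a finite spectrum with $K(i)_*\bigl(F(n)\Smash F(n')\bigr)=K(i)_*F(n)\ot_{K(i)_*}K(i)_*F(n')$, which vanishes precisely when $i<\max(n,n')$; thus $F(n)\Smash F(n')$ has type $\max(n,n')$ and therefore the same Bousfield class as $F(\max(n,n'))$, by the consequence of the Thick Subcategory Theorem noted in Definition~\ref{defn-basic}.
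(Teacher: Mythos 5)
Your primary argument is correct, but it takes a genuinely different route from the paper's. The paper proves the lemma in one step from the Thick Subcategory Theorem: since $K(i)_*F(n)$ is zero iff $i<n$, the spectrum $F(n)\Wedge F(n')$ has type $\min(n,n')$ and $F(n)\Smash F(n')$ has type $\max(n,n')$, and finite $p$-local spectra of the same type have the same Bousfield class. You instead derive the result formally inside the ordered semiring from Lemma~\ref{lem-split}, iterating $f(m)=k'(m)\Wedge f(m+1)$ to write $f(n)=f(n')\Wedge\bigWedge_{m=n}^{n'-1}k'(m)$, then using $k'(m)\Smash f(n')=0$ for $m<n'$ together with idempotence of $f(n')$ to collapse the smash. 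This is valid (and your remark that $\Smash$ distributes over the finite $\Wedge$ is correct, though automatic from axiom~(c)); however, Lemma~\ref{lem-split} already rests on the Nilpotence and Thick Subcategory Theorems, so your route is longer without reducing the geometric input. What you gain is that the reduction from Lemma~\ref{lem-split} to Lemma~\ref{lem-ff} becomes pure semiring bookkeeping; what the paper gains is brevity and a direct display of the type/Bousfield-class dictionary. The alternative sketch in your final paragraph coincides with the paper's actual proof (the wedge case goes the same way, since $K(i)_*(F(n)\Wedge F(n'))$ vanishes iff $i<\min(n,n')$).
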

\begin{proof}
 Recall that $F(n)$ has type $n$, which means that $K(i)_*F(n)$ is
 zero for $i<n$, and nonzero for $i\geq n$.  It follows that
 $F(n)\Wedge F(n')$ has type $\min(n,n')$, and $F(n)\Smash F(n')$ has
 type $\max(n,n')$.  The Thick Subcategory Theorem tells us that the
 Bousfield class of a finite $p$-local spectrum depends only on its
 type, so $f(n)\Wedge f(n')=f(\min(n,n'))$ and
 $f(n)\Smash f(n')=f(\max(n,n'))$.
\end{proof}

\begin{lemma}\lbl{lem-aa}
 The elements $a(n)$ satisfy $a(n)\Smash a(n)=a(n)\leq k'(n)$, and
 $a(n)\Smash a(m)=0$ for $m\neq n$.  Thus, the element
 $\ep(n)=\bigWedge_{i<n}a(i)$ is idempotent for all $n\in\Ni$.
\end{lemma}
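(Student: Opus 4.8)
The plan is to reduce everything to the factorization $A(n)=K'(n)\Smash C_nS$ recorded in Definition~\ref{defn-basic} (a consequence of the Smash Product Theorem), so that all four assertions become statements about the multiplicative behaviour of the two factors $K'(n)$ and $C_nS$.

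First I would handle the claims about a single $a(n)$. Since $K'(n)$ carries a ring structure by Lemma~\ref{lem-kk-ring}, the spectrum $A(n)=K'(n)\Smash C_nS$ is a $K'(n)$-module, so Proposition~\ref{prop-bc-omni}(a) gives $a(n)=k'(n)\Smash a(n)\le k'(n)$. Next I would record that $\bc{C_nS}$ is idempotent: the classes $\bc{C_nS}$ and $\bc{L_nS}$ are complementary — this follows from the cofibre sequence $C_nS\to S\to L_nS$ together with the fact that $L_n$ is smashing (Smash Product Theorem), which gives $C_nS\Smash L_nS=L_n(C_nS)=0$ — so $\bc{C_nS}$ is idempotent by Lemma~\ref{lem-complement}. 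Combining this with $k'(n)\Smash k'(n)=k'(n)$ (Lemma~\ref{lem-rings-idempotent}) and rearranging smash factors yields
\[ a(n)\Smash a(n)=\bc{K'(n)\Smash K'(n)}\Smash\bc{C_nS\Smash C_nS}=k'(n)\Smash\bc{C_nS}=\bc{K'(n)\Smash C_nS}=a(n). \]
For $m\ne n$, Lemma~\ref{lem-split} gives $k'(n)\Smash k'(m)=0$, and since $A(n)\Smash A(m)=(K'(n)\Smash K'(m))\Smash(C_nS\Smash C_mS)$ we obtain $a(n)\Smash a(m)=\bc{A(n)\Smash A(m)}\le k'(n)\Smash k'(m)=0$, i.e.\ $a(n)\Smash a(m)=0$.

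Finally, to see that $\ep(n)=\bigWedge_{i<n}a(i)$ is idempotent I would use the complete distributivity of $\CL$ (Definition~\ref{defn-L}) to expand $\ep(n)\Smash\ep(n)=\bigWedge_{i,j<n}\bigl(a(i)\Smash a(j)\bigr)$; by the previous paragraph every off-diagonal term equals $0$ and so drops out of the join, leaving $\bigWedge_{i<n}a(i)=\ep(n)$. The only point that needs attention is that for $n=\infty$ this is an infinite join, so one genuinely needs complete distributivity of $\CL$ rather than the finitary distributive law; no other subtlety arises, and since all the substantive input is already packaged in the cited results (the Smash Product Theorem, Proposition~\ref{prop-bc-omni}, and Lemmas~\ref{lem-kk-ring}, \ref{lem-rings-idempotent} and~\ref{lem-split}), I do not anticipate a real obstacle.
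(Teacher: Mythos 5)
Your proof is correct and follows the same overall strategy as the paper: factor $A(n)=K'(n)\Smash C_nS$ via the Smash Product Theorem and analyse the two factors separately, obtaining $a(n)\le k'(n)$ and then invoking Lemma~\ref{lem-split}. The one place you diverge is in establishing the idempotence of $\bc{C_nS}$: the paper appeals directly to the spectrum-level identity $C_nS\Smash C_nS=C_n(C_nS)=C_nS$ (the smashing property plus idempotence of the functor $C_n$), whereas you route through the observation that $\bc{C_nS}$ and $\bc{L_nS}$ are complements and then cite Lemma~\ref{lem-complement}. Both are fine and rest on the same underlying facts; the paper's version is a one-liner, while yours is slightly longer but records the mildly stronger statement that $\bc{C_nS}$ is complemented. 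You also spell out the deduction of the last sentence (idempotence of $\ep(n)$), which the paper leaves implicit, and you correctly flag that for $n=\infty$ this genuinely uses complete distributivity of $\CL$ rather than just the finitary distributive law.
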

This is also proved in~\cite[Section 1]{ho:blf}.
\begin{proof}
 First, the Smash Product Theorem~\cite[Theorem 7.5.6]{ra:nps} tells
 us that $A(n)=K'(n)\Smash C_nS$, so $a(n)\leq k'(n)$, so
 $a(n)\Smash a(m)=0$ for $m\neq n$ by Lemma~\ref{lem-split}.  We also
 have $C_nS\Smash C_nS=C_n(C_nS)=C_nS$ by the basic theory of
 Bousfield localisation, and in combination with Lemma~\ref{lem-split}
 this gives $a(n)\Smash a(n)=a(n)$.  
\end{proof}

\begin{corollary}\lbl{cor-ef}
 For all $n$ in $\N$ we have $\ep(n)\Smash f(n)=0$.
\end{corollary}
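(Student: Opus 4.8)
The plan is to reduce the claim to two facts that are already in hand: $a(i)\leq k'(i)$ and $k'(i)\Smash f(n)=0$ whenever $i<n$. First I would dispose of the degenerate case $n=0$. Here $\ep(0)=\bigWedge_{i<0}a(i)$ is the empty join, which is the bottom element $0\in\CL$, so $\ep(0)\Smash f(0)=0\Smash f(0)=0$ directly from axiom~(d) of Definition~\ref{defn-osr}. So from now on assume $n\geq 1$.

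Next I would handle a single factor $a(i)$ with $i<n$. Lemma~\ref{lem-aa} gives $a(i)\leq k'(i)$, and Lemma~\ref{lem-split} gives $k'(i)\Smash f(n)=0$ (the hypothesis $i<n$ is exactly what is needed there). Since $\Smash$ preserves the order, $a(i)\Smash f(n)\leq k'(i)\Smash f(n)=0$, and as $0$ is the least element of $\CL$ this forces $a(i)\Smash f(n)=0$ for every $i<n$.

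Finally, because $\Smash$ distributes over $\Wedge$ and $\ep(n)=\bigWedge_{i<n}a(i)$ is a \emph{finite} join (as $n\in\N$), I can just iterate the binary distributive law to get
\[ \ep(n)\Smash f(n) = \bigWedge_{i<n}\bigl(a(i)\Smash f(n)\bigr) = \bigWedge_{i<n} 0 = 0. \]
There is no genuine obstacle here; the only points requiring a moment's care are the empty-join convention in the case $n=0$ and the observation that, since the join defining $\ep(n)$ is finite, one needs only Definition~\ref{defn-osr}(c) and not the complete distributivity of $\CL$.
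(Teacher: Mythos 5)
Your proof is correct and follows essentially the same route as the paper's: reduce to showing $a(i)\Smash f(n)=0$ for $i<n$, then combine $a(i)\leq k'(i)$ from Lemma~\ref{lem-aa} with $k'(i)\Smash f(n)=0$ from Lemma~\ref{lem-split}. The only difference is that you spell out the $n=0$ convention and the finite distributivity step explicitly, which the paper leaves implicit.
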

\begin{proof}
 As $\ep(n)=\bigWedge_{i<n}a(i)$, it will be enough to show that
 $a(i)\Smash f(n)=0$ for $i<n$.  The Lemma shows that
 $a(i)\leq k'(i)$, and $k'(i)\Smash f(n)=0$ by
 Lemma~\ref{lem-split}, so $a(i)\Smash f(n)=0$ as required.
\end{proof}

\begin{lemma}\lbl{lem-at}
 The elements $a(n)$ satisfy $a(n)\Smash k(m)=0$ for all $n$ and $m$,
 and $k'(n)=k(n)\Wedge a(n)$. 
\end{lemma}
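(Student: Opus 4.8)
The plan is to treat the two assertions separately, using the identity $A(n)=C_nK'(n)=K'(n)\Smash C_nS$ supplied by the Smash Product Theorem, together with Lemmas~\ref{lem-aa}, \ref{lem-split}, \ref{lem-split-k} and~\ref{lem-kf}, and the fact $\bc{E(n)}=\bigWedge_{i\le n}k(i)$ from~\cite[Theorem 2.1]{ra:lrc}.

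For the claim $a(n)\Smash k(m)=0$, fix $n\in\N$ and split on $m\in\Ni$. If $m\le n$, then $A(n)$, being the fibre of the $E(n)$-localisation map $K'(n)\to L_nK'(n)$, is $E(n)$-acyclic, so $\bc{E(n)}\Smash a(n)=0$; since $k(m)\le\bc{E(n)}=\bigWedge_{i\le n}k(i)$, multiplying by $a(n)$ gives $k(m)\Smash a(n)=0$. If $m>n$ (including $m=\infty$), then Lemma~\ref{lem-kf} gives $k(m)\le f(n+1)$, while $a(n)\le k'(n)$ by Lemma~\ref{lem-aa}, so $a(n)\Smash k(m)\le k'(n)\Smash f(n+1)=0$ by Lemma~\ref{lem-split}. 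In both cases $a(n)\Smash k(m)=0$; this covers $m=n$ in particular.

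For the identity $k'(n)=k(n)\Wedge a(n)$, the inclusion $k(n)\Wedge a(n)\le k'(n)$ is immediate from $k(n)\le k'(n)$ (Lemma~\ref{lem-split-k}) and $a(n)\le k'(n)$ (Lemma~\ref{lem-aa}). For the reverse inclusion, I use the cofibre sequence $A(n)\to K'(n)\to L_nK'(n)$, which gives
\[ k'(n)=\bc{K'(n)}\le\bc{A(n)}\Wedge\bc{L_nK'(n)}=a(n)\Wedge\bc{L_nK'(n)}, \]
so it remains to evaluate $\bc{L_nK'(n)}$. By the Smash Product Theorem $L_nK'(n)=K'(n)\Smash L_nS$, and $\bc{L_nS}=\bc{E(n)}$: indeed $L_nX\simeq X\Smash L_nS$ shows that $X$ is $L_nS$-acyclic iff $L_nX=0$ iff $X$ is $E(n)$-acyclic. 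Hence, using $\bc{E(n)}=\bigWedge_{i\le n}k(i)$, the distributive law, and Lemma~\ref{lem-split-k} (which gives $k'(n)\Smash k(i)=0$ for $i<n$ and $k'(n)\Smash k(n)=k(n)$),
\[ \bc{L_nK'(n)}=k'(n)\Smash\bigWedge_{i\le n}k(i)=\bigWedge_{i\le n}\bigl(k'(n)\Smash k(i)\bigr)=k(n). \]
Therefore $k'(n)\le a(n)\Wedge k(n)$, and combined with the easy inclusion this gives $k'(n)=k(n)\Wedge a(n)$.

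The only real content is the evaluation $\bc{L_nK'(n)}=k(n)$; the remaining steps are short formal manipulations in the ordered semiring. The one point needing a little care is the case $m=\infty$ in the first claim, where the stated form of Lemma~\ref{lem-split-k} may not apply directly to infinite indices, which is why I route through $f(n+1)$ and Lemma~\ref{lem-split} rather than invoking $k'(n)\Smash k(\infty)=0$ outright.
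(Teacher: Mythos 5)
Your proof is correct, and the second half (establishing $k'(n)=k(n)\Wedge a(n)$ via the fibration $A(n)\to K'(n)\to L_nS\Smash K'(n)$ and the computation $\bc{L_nK'(n)}=k(n)$) is essentially the paper's argument. For the first half you take a genuinely different route: the paper derives $a(n)\Smash k(m)=0$ in a single stroke from Lemma~\ref{lem-kkt} ($t(\infty)\Smash a(n)=0$) together with Lemma~\ref{lem-tk} ($k(m)\leq t(\infty)$ for all $m\in\Ni$), which handles $m=\infty$ with no special pleading. You instead split into $m\leq n$ (where you use $E(n)$-acyclicity of $A(n)=C_nK'(n)$ and $k(m)\leq\bc{E(n)}$) and $m>n$ (where you route through $a(n)\leq k'(n)\leq f(n+1)$-complementarity via Lemmas~\ref{lem-kf} and~\ref{lem-split}). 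Both are valid; the paper's version is shorter and avoids the case split, but it leans on Lemma~\ref{lem-kkt}, which in turn carries the content of $\bc{v_n^{-1}BP}=\bc{E(n)}$. Your version exposes more directly which acyclicity facts are being used in each regime, and your caution about Lemma~\ref{lem-split-k} not covering $m=\infty$ is well placed --- that lemma is stated and proved only for finite $m$, and your detour through $f(n+1)$ is the right fix.
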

This is also proved in~\cite[Section 1]{ho:blf}.
\begin{proof}
 We saw in Lemma~\ref{lem-kkt} that $t(\infty)\Smash a(n)=0$, and
 $k(m)\leq t(\infty)$ (even for $m=\infty$) by Lemma~\ref{lem-tk}, so
 $k(m)\Smash a(n)=0$.  Next, it follows from the Smash Product Theorem
 that $L_nS\Smash X=0$ iff $L_nX=0$ iff $E(n)\Smash X=0$, which means
 that $\bc{L_nS}=\bc{E(n)}$.  This is also the same as
 $\bigWedge_{i=0}^nk(i)$, by~\cite[Theorem 2.1]{ra:lrc}.  We can
 multiply by $k'(n)$ and use Lemma~\ref{lem-split-k} to get
 $\bc{L_nK'(n)}=k(n)$.  

 We also have a fibration 
 \[ A(n) = C_nS\Smash K'(n) \to K'(n) \to L_nS\Smash K'(n), \]
 which easily gives 
 \[ k'(n) = \bc{K'(n)} = \bc{A(n)}\Wedge\bc{L_nS\Smash K'(n)}
     = a(n) \Wedge k(n)
 \]
 as claimed.
\end{proof}

\begin{lemma}\lbl{lem-jj}
 For all $m,m'\in\Nw$ we have $j(m)\Smash j(m')=0$. 
\end{lemma}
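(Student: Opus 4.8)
The plan is to deduce the statement immediately from the two preceding lemmas about the classes $j(m)$, without any new spectrum-level computation. The key observation is that Lemma~\ref{lem-jh} puts every $j(m)$ below $k(\infty)$ in the ordering of $\CL$, while Lemma~\ref{lem-kj} records that $k(\infty)$ annihilates every $j(m')$. Multiplying one fact into the other is all that is needed.

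Concretely, I would argue as follows. Fix $m,m'\in\Nw$. By Lemma~\ref{lem-jh} we have $j(m)\leq k(\infty)$. Since the $\Smash$-operation preserves the order (as noted just after Definition~\ref{defn-osr}), multiplying this inequality by $j(m')$ gives $j(m)\Smash j(m')\leq k(\infty)\Smash j(m')$. But Lemma~\ref{lem-kj}, applied with $n=\infty$, says $k(\infty)\Smash j(m')=0$. As $0$ is the bottom element of $\CL$, the inequality $j(m)\Smash j(m')\leq 0$ forces $j(m)\Smash j(m')=0$.

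There is essentially no obstacle here, and in particular no case analysis on $\om$: both input lemmas are already stated for the full range $m,m'\in\Nw$ (which is precisely why Lemma~\ref{lem-jh} was recorded over $\Nw$ rather than just over $\Ni$, and why the case $m'=\om$ of Lemma~\ref{lem-kj} is available), so the value $\om$ needs no separate treatment. One could instead attempt a hands-on proof by rewriting $j(m)\Smash j(m')=\bc{IT(m)\Smash IT(m')}$ and feeding it into Proposition~\ref{prop-bc-omni}(d), but that route is strictly more work and yields nothing extra, so I would not pursue it.
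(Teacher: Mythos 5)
Your argument is correct and is essentially identical to the paper's: both deduce the claim by combining Lemma~\ref{lem-jh} ($j(m)\leq k(\infty)$) with Lemma~\ref{lem-kj} ($k(\infty)\Smash j(m')=0$) and monotonicity of $\Smash$. No further comment needed.
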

\begin{proof}
 Lemma~\ref{lem-jh} gives $j(m)\leq k(\infty)$, and Lemma~\ref{lem-kj}
 gives $k(\infty)\Smash j(m')=0$, so $j(m)\Smash j(m')=0$. 
\end{proof}

\begin{lemma}\lbl{lem-jk-inf}
 If $m\in\Nw$ and $U\sse\N$ is infinite then $j(m)\leq k(U)$. 
\end{lemma}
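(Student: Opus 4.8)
The plan is to reduce the statement first to the single case $m=\infty$, then to a vanishing statement for $BP/p$-cohomology, and finally to a chromatic argument in which the infinitude of $U$ is essential. First, every $m\in\Nw$ satisfies $m\leq\infty$, so Lemma~\ref{lem-jj-leq} gives $j(m)\leq j(\infty)$; by transitivity it is therefore enough to prove $j(\infty)\leq k(U)$. Unwinding the definition of the order on $\CL$, this says that whenever $X$ is a spectrum with $K(i)\Smash X=0$ for all $i\in U$, we have $IBP\Smash X=0$. Now $BP=T(\infty)$ is $(-1)$-connected with each homotopy group finitely generated over $\Zpl$ (Lemma~\ref{lem-tq-finite}), so Proposition~\ref{prop-bc-omni}(d) applies and converts $IBP\Smash X=0$ into $F(X,BP/p)=0$. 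Thus the lemma is equivalent to: if $K(i)\Smash X=0$ for every $i$ in the infinite set $U\sse\N$, then $F(X,BP/p)=0$.

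For this last statement I would work with the chromatic tower of $BP/p$. Regarded as a $BP$-module, $BP/p$ carries self-maps by $v_1,v_2,\dots$, and iterating Proposition~\ref{prop-bc-omni}(c) together with Ravenel's computation $\bc{v_i^{-1}BP/(p,v_1,\dots,v_{i-1})}=k(i)$ (\cite[Theorem~2.1]{ra:lrc}, the input already used in Lemma~\ref{lem-kkt}) gives, for each $n$,
\[ \bc{BP/p}=\bc{BP/(p,v_1,\dots,v_n)}\Wedge\bigWedge_{i=1}^{n}k(i), \]
while the homotopy colimit of the spectra $BP/(p,v_1,\dots,v_n)$ along the quotient maps is $H\Z/p$. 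Feeding $F(X,-)$ through the resulting cofibre sequences produces a tower computing $F(X,BP/p)$ whose successive layers are $F(X,-)$ of the telescopes $v_i^{-1}BP/(p,v_1,\dots,v_{i-1})$ and of $H\Z/p$. Since $X$ is $K(i)$-acyclic for every $i\in U$, and since each such telescope is a $BP$-module with Bousfield class $k(i)$, one expects the layers indexed by $i\in U$ to be killed; because $U$ is infinite, every tail of the tower eventually consists only of such layers, so an induction up the tower followed by a passage to the limit should give $F(X,BP/p)=0$.

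The main obstacle is that last step. Since $X$ need not be finite, $F(X,-)$ does not commute with the homotopy colimits involved, so one has to run the argument through the Milnor $\lim^{1}$ sequence (equivalently, establish convergence of the associated spectral sequence) and check that the $\lim^{1}$-terms vanish; one must also upgrade the vanishing of the individual layers from Bousfield-class information to an honest statement about $F(X,-)$. It is precisely the infinitude of $U$ that makes this work: for a finite set, e.g.\ $U=\{0,1,\dots,n\}$, the conclusion is false, since a finite spectrum of type $n+1$ is $K(i)$-acyclic for all $i\in U$ yet has $F(X,BP/p)\neq0$ (equivalently $IBP\Smash X\neq0$), so $j(\infty)\not\leq k(U)$ in that case.
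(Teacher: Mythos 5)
Your opening reduction coincides exactly with the paper's: Lemma~\ref{lem-jj-leq} reduces to $m=\infty$, Lemma~\ref{lem-tq-finite} plus Proposition~\ref{prop-bc-omni}(d) translate $J(\infty)\Smash X=0$ into $F(X,BP/p)=0$, so the content is that $BP/p$ (equivalently $BP^\wedge_p$) is $K(U)$-local for infinite $U\sse\N$. At that point the paper simply invokes Hovey's result \cite[Corollary 3.5]{ho:blf}, which is precisely this statement; the lemma is really a packaging of Hovey's theorem, not something the paper re-proves.

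Your attempt to re-derive Hovey's input from the chromatic tower of $BP/p$ has a gap that is not just the acknowledged $\lim^1$/convergence issue but an outright false step. Peeling off $v_i$-self-maps one at a time produces a tower whose layers are indexed by \emph{all} $i\in\N$, with the $i$-th layer Bousfield-equivalent to $k(i)$; the hypothesis only kills the layers with $i\in U$. Your claim that ``because $U$ is infinite, every tail of the tower eventually consists only of such layers'' is wrong unless $U$ is cofinite: take $U$ the set of even integers, and then the odd-indexed layers, which need not vanish on $X$, recur at every stage of the tower. So the induction you describe does not close up, even before one worries about $\lim^1$. Infinitude of $U$ is indeed the crucial hypothesis (your counterexample for finite $U$ is correct), but it does not enter the argument in the way you suggest; the real content is Hovey's theorem, whose proof proceeds by a different mechanism and is what the paper cites rather than reconstructs.
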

\begin{proof}
 In view of Lemma~\ref{lem-jj-leq} we may assume that $m=\infty$, so
 $T(m)=BP$.  Suppose that $K(U)\Smash X=0$.  Hovey proved
 as~\cite[Corollary 3.5]{ho:blf} that $BP^\wedge_p$ is $K(U)$-local,
 so the spectrum $BP/p=(BP^\wedge_p)/p$ is also $K(U)$-local, so 
 $F(X,BP/p)=0$.  It follows by Proposition~\ref{prop-bc-omni}(d)
 that $J(\infty)\Smash X=0$.  We conclude that $j(\infty)\leq k(U)$,
 as claimed. 
\end{proof}

\begin{corollary}\lbl{cor-jk-big}
 If $m\in\Nw$, and $U\sse\Ni$ is big then $j(m)\leq k(U)$.
\end{corollary}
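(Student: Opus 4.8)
The plan is to deduce this immediately from Lemma~\ref{lem-jk-inf} and Lemma~\ref{lem-jh}, by a case split according to the precise way in which $U$ fails to be small.

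First I would unwind Definition~\ref{defn-big}: to say that $U\sse\Ni$ is big means that $U$ is not contained in $[0,n)$ for any $n\in\N$. I claim this forces at least one of the following: $U\cap\N$ is infinite, or $\infty\in U$. Indeed, if $U\cap\N$ were finite, then $U\cap\N\sse[0,n)$ for some $n\in\N$, and the only way to prevent $U\sse[0,n)$ is then to have $\infty\in U$. So these two cases are exhaustive, and it is enough to treat each of them.

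In the first case, put $U'=U\cap\N$, which is an infinite subset of $\N$; Lemma~\ref{lem-jk-inf} then gives $j(m)\leq k(U')$. In the second case, $\{\infty\}$ is a subset of $U$, and Lemma~\ref{lem-jh} gives $j(m)\leq k(\infty)=k(\{\infty\})$. Either way we have produced a subset $V\sse U$ (namely $U'$, respectively $\{\infty\}$) with $j(m)\leq k(V)$. Finally, from $V\sse U$ together with Lemma~\ref{lem-kk} we get $k(V)\Wedge k(U)=k(V\cup U)=k(U)$, which is precisely the statement $k(V)\leq k(U)$; hence $j(m)\leq k(V)\leq k(U)$, as claimed.

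There is no real obstacle here beyond getting the combinatorics of ``big'' right and checking that the case analysis is exhaustive; each individual inequality is an immediate citation of an earlier result, so the write-up should be only a few lines.
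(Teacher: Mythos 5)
Your proof is correct and takes essentially the same route as the paper, which simply cites Lemmas~\ref{lem-jh} and~\ref{lem-jk-inf}; you have just made explicit the case split (on whether $U\cap\N$ is infinite or $\infty\in U$) that the paper leaves implicit.
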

\begin{proof}
 This is just the conjunction of Lemmas~\ref{lem-jh}
 and~\ref{lem-jk-inf}. 
\end{proof}

\begin{lemma}\lbl{lem-fj}
 For all $n\in\N$ and $m\in\Nw$ we have $f(n)\Smash j(m)=j(m)$, and
 $a(n)\Smash j(m)=k'(n)\Smash j(m)=0$.
\end{lemma}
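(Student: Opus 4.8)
The plan is to prove the two vanishing statements first and then deduce $f(n)\Smash j(m)=j(m)$ from them. Note that $j(m)\le k(\infty)\le f(n)$ by Lemmas~\ref{lem-jh}, \ref{lem-tk} and~\ref{lem-kf}, but there is no purely formal passage from this inequality to an equality $f(n)\Smash j(m)=j(m)$ (recall $j(m)$ is not idempotent, by Lemma~\ref{lem-jj}), so the multiplicative decomposition of $f(n)$ will have to be brought in at the end.

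First I would establish $a(n)\Smash j(m)=0$. By Lemma~\ref{lem-jh} and Lemma~\ref{lem-tk} we have $j(m)\le k(\infty)\le t(\infty)$, and by Lemma~\ref{lem-kkt} we have $t(\infty)\Smash a(n)=0$. Since $\Smash$ preserves the ordering, this gives $a(n)\Smash j(m)\le a(n)\Smash t(\infty)=0$, and hence $a(n)\Smash j(m)=0$. No separate treatment of $m=\om$ is required, since Lemma~\ref{lem-jh} already covers that case. Next, $k'(n)\Smash j(m)=0$ follows at once: by Lemma~\ref{lem-at} we have $k'(n)=k(n)\Wedge a(n)$, so
\[ k'(n)\Smash j(m) = (k(n)\Smash j(m))\Wedge(a(n)\Smash j(m)), \]
where the first term is $0$ by Lemma~\ref{lem-kj} and the second is $0$ by what we have just shown.

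For the remaining identity I would use the relation $f(i)=k'(i)\Wedge f(i+1)$ of Lemma~\ref{lem-split}, which when iterated down from $f(0)=1$ gives $1=f(n)\Wedge\bigWedge_{i<n}k'(i)$. Smashing this with $j(m)$ and distributing over the finite join yields
\[ j(m) = 1\Smash j(m) = (f(n)\Smash j(m))\Wedge\bigWedge_{i<n}(k'(i)\Smash j(m)) = f(n)\Smash j(m), \]
since each $k'(i)\Smash j(m)$ vanishes by the previous paragraph.

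I do not expect any serious difficulty: every ingredient is one of the earlier lemmas, and the argument is essentially bookkeeping inside the ordered semiring $\CL$. The one place that calls for an idea rather than bookkeeping is $a(n)\Smash j(m)=0$, where the trick is to push $j(m)$ below $t(\infty)=\bc{BP}$ (via $k(\infty)$) so that the known relation $t(\infty)\Smash a(n)=0$ can be applied; once that is in hand, the other two assertions follow formally.
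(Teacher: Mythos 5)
Your proof is correct, and it inverts the logical flow of the paper's argument. The paper first establishes $f(n)\Smash j(m)=j(m)$ directly by a homotopy-theoretic argument (passing to Brown--Comenetz duals via Proposition~\ref{prop-bc-omni}(d), replacing $F(n)$ by $F(n-1)/w_{n-1}^k$ via the Thick Subcategory Theorem, and taking an inverse limit with a connectivity argument), and only then deduces $k'(n)\Smash j(m)=0$ by multiplying $f(n+1)\Smash j(m)=j(m)$ by $k'(n)$ and using $k'(n)\Smash f(n+1)=0$, with $a(n)\Smash j(m)=0$ following from $a(n)\leq k'(n)$. You go the other way around: you get $a(n)\Smash j(m)=0$ cheaply from $j(m)\leq t(\infty)$ and $t(\infty)\Smash a(n)=0$ (Lemma~\ref{lem-kkt}), then $k'(n)\Smash j(m)=0$ from the splitting $k'(n)=k(n)\Wedge a(n)$ of Lemma~\ref{lem-at} together with Lemma~\ref{lem-kj}, and finally $f(n)\Smash j(m)=j(m)$ by smashing $j(m)$ into $1=f(n)\Wedge\bigWedge_{i<n}k'(i)$. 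I checked the dependency chain: none of Lemmas~\ref{lem-kkt}, \ref{lem-at}, \ref{lem-split-k}, \ref{lem-kj}, or \ref{lem-split} invoke Lemma~\ref{lem-fj}, so there is no circularity. Your route is pure bookkeeping in the ordered semiring once the cited lemmas are in place, and it handles $m=\om$ for free since those lemmas are already stated for all $m\in\Nw$; the trade-off is that it leans on Lemma~\ref{lem-kkt} and hence on the Smash Product Theorem, whereas the paper's direct proof of $f(n)\Smash j(m)=j(m)$ avoids the chromatic splitting machinery entirely (though it does use the Thick Subcategory Theorem and a somewhat delicate inverse-limit argument). Both are valid; the paper's version makes the positive identity the self-contained primitive, while yours treats the vanishing relations as the primitives.
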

The statements with $m=0$ are contained in~\cite[Lemma 7.1]{hopa:sbl}.
\begin{proof}
 The claims for $m=\om$ follow easily from the claims for $m\in\N$, so
 we will assume that $m\in\Ni$.

 We first prove that $f(n)\Smash j(m)=j(m)$.  This is immediate when
 $n=0$, and follows from Proposition~\ref{prop-bc-omni}(d) when
 $n=1$, so we can suppose that $n>1$.  It is clear that
 $f(n)\Smash j(m)\leq j(m)$, so we just need the reverse inequality.
 Suppose that $X\Smash F(n)\Smash J(m)=0$, or equivalently
 $F(X\Smash F(n),T(m)/p)=0$.  We chose $F(n)$ to be self-dual, so 
 $F(X,F(n)\Smash T(m)/p)=0$.  By the Thick Subcategory Theorem, we can
 replace $F(n)$ here by any other finite spectrum of type $n$, so in
 particular $F(X,F(n-1)/w_{n-1}^k\Smash T(m)/p)=0$ for all $k$.  As
 $n>1$, a connectivity argument shows that $F(n-1)\Smash T(m)/p$ is
 the homotopy inverse limit of the spectra
 $F(n-1)/w_{n-1}^k\Smash T(m)/p$, so we see that 
 $F(X,F(n-1)\Smash T(m)/p)=0$.  By reversing the previous steps, we
 get $X\Smash F(n-1)\Smash J(m)=0$.  This gives
 $f(n)\Smash j(m)=f(n-1)\Smash j(m)$, which is the same as $j(m)$ by
 induction. 

 We can multiply the relation $f(n+1)\Smash j(m)=j(m)$ by $k'(n)$ and
 use $k'(n)\Smash f(n+1)=0$ (from Lemma~\ref{lem-split}) to get
 $k'(n)\Smash j(m)=0$.  We also have $a(n)\leq k'(n)$ by
 Lemma~\ref{lem-at}, so $a(n)\Smash j(m)=0$.
\end{proof}

\section{The main theorem}

By considering the phenomena in Lemma~\ref{lem-t-aux-ops} below, we
see that $\phi$ is unlikely to preserve either $\Wedge$ or $\Smash$
unless TC holds.  However, if we pass to $\bCL$ then we have the
following.

\begin{theorem}\lbl{thm-main}
 The map $\bphi=\pi\phi\:\CA\to\bCL$ is an injective homomorphism of
 ordered semirings. 
\end{theorem}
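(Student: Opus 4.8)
The statement has two parts: $\bphi$ is a homomorphism of ordered semirings, and it is injective. I would prove the homomorphism property first, then injectivity, since the latter is really a question about the ordering and can lean on explicit formulas for $\bphi$ on each of the three families $\CA_t,\CA_j,\CA_k$.

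\textbf{Step 1: compute $\phi$ modulo $\ep(\infty)$.} The key observation is that all the ``indeterminacy'' in the head rules of Remark~\ref{rem-head-ops} and in $\phi$ failing to be a homomorphism comes from the classes $a(n)$, and these all die in $\bCL$. Concretely, I would first record, using the relations collected in Section~5, that $\bphi(k(U))=\pi k(U)$, $\bphi(j(m,S))=\pi(j(m)\Wedge k(S))$, $\bphi(t(q,T))=\pi(t(q,T))$, and then show that in $\bCL$ the problematic products collapse. For instance, $\phi$ is ill-behaved on $t(q)\Smash j(m)$ only at the boundary $q=m+1$ versus $q\le m$; but Lemma~\ref{lem-tj} already handles $m<q$ and $m\ge q$ cleanly (giving $0$ and $j(m)$ respectively), so this case is actually fine. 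The genuine source of trouble is the wedge $f(n)=k'(n)\Wedge f(n+1)$ and the splitting $k'(n)=k(n)\Wedge a(n)$ of Lemma~\ref{lem-at}: modulo $\ep(\infty)$ we get $k'(n)\equiv k(n)$, and more generally $t(q,T;n)$ becomes independent of the auxiliary parameter $n$ once we work mod $\ep(n)$. I would formalise this as: for each element $x\in\CA$, the image $\bphi(x)$ equals the evident ``no-$a(n)$'' representative, using $a(n)\Smash k(m)=0$ (Lemma~\ref{lem-at}), $a(n)\Smash j(m)=0$ (Lemma~\ref{lem-fj}), and $t(\infty)\Smash a(n)=0$ (Lemma~\ref{lem-kkt}), so that $a(n)\le\bphi$ of many basic classes and hence $\pi$ kills the discrepancies.

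\textbf{Step 2: verify $\bphi(x\Wedge y)=\bphi(x)\Wedge\bphi(y)$ and $\bphi(x\Smash y)=\bphi(x)\Smash\bphi(y)$.} With the reduced formulas in hand this is a finite case check over the nine (unordered: six) combinations of head types, exactly paralleling Definition~\ref{defn-A}. The wedge side: $k(U)\Wedge k(U')$, $j(m,S)\Wedge k(U')$ (here the big/small dichotomy matches Corollary~\ref{cor-jk-big}, $j(m)\le k(U)$ for $U$ big), $t(q,T)\Wedge\text{anything}$ via $k(\infty)\le t(q;n)$ and $j(m)\le k(\infty)$ (Lemmas~\ref{lem-tj},~\ref{lem-jh}), $j(m,S)\Wedge j(m',S')$ via Lemma~\ref{lem-jj-leq}. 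The smash side: $k\Smash k$ is Lemma~\ref{lem-kk}; $j(m)\Smash j(m')=0$ is Lemma~\ref{lem-jj}; $t(q)\Smash j(m)$ is Lemma~\ref{lem-tj} (and the $q>m$ case lands in $\CA_k$ as the definition of $\CA$ predicts); $t(q)\Smash k$ and $j\Smash k$ follow from Lemma~\ref{lem-tk}, Lemma~\ref{lem-kj}, Corollary~\ref{cor-tk}; $t(q)\Smash t(q')$ is Lemma~\ref{lem-tt} combined with the $f(n)$-bookkeeping from Lemma~\ref{lem-ff} and Corollary~\ref{cor-ef}. Throughout, any term involving $a(n)$ that survives in $\CL$ is $\le\ep(\infty)$ and therefore vanishes in $\bCL$; this is precisely where the passage to the quotient is essential.

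\textbf{Step 3: injectivity.} Here I would use the homomorphisms $\tail\:\CA\to\CP$ and $\head\:\CA\to\CH$ (Definitions~\ref{defn-tail},~\ref{defn-head}) together with analogous ``coordinate'' functions on $\bCL$. Given $\bphi(x)=\bphi(y)$, I'd first recover $\tail(x)=\tail(y)$: by Remark~\ref{rem-tail}, $i\in\tail(x)$ iff $k(i)\le x$, and on the $\bCL$ side $\pi k(i)\le\bphi(x)$ iff $K(i)_*$ of the representing spectrum is nonzero (Lemma~\ref{lem-Kn-mod}, which survives the quotient since $k(i)\not\le\ep(\infty)$ — this uses that $K(i)_*C_\infty S=0$ or a direct check that $\pi k(i)\neq 0$). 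That pins down the tail. Then I'd separate the three families by their comparison with the $\pi k(i)$, $\pi t(q)$, $\pi j(m)$: $t$-type elements are exactly those $\ge$ some $\pi t(q)\neq 0$, which $j$- and $k$-type elements never are (the explicit order table just before Definition~\ref{defn-tht}, pushed to $\bCL$); within $\CA_t$ the value of $q$ is detected by which $\pi t(q)$ lie below $\bphi(x)$; within $\CA_j$ the value of $m$ is detected by Lemma~\ref{lem-tj} ($t(q)\Smash j(m)=0$ iff $q>m$), which gives a sharp test in $\bCL$. The one subtlety is distinguishing $j(m,S)$ from $k(U)$ when $m=\om$ or $S$ is on the boundary, but the $\tj$ normal form of Remark~\ref{rem-j-tilde} already tells us the only collisions in $\CA$ itself are the intended ones, so it suffices to show $\bphi$ does not introduce new ones — i.e. $\pi j(m)\neq\pi j(m')$ for $m\neq m'$ and $\pi j(\om,S)\neq\pi k(S)$ for $S$ small, again via the $t(q)\Smash(-)$ and $a(n)\Smash(-)$ tests.

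\textbf{Main obstacle.} The homomorphism verification in Step~2 is long but mechanical; the real content — and the step I expect to be delicate — is Step~1, establishing that every discrepancy between $\phi(x\Smash y)$ (or $\phi(x\Wedge y)$) and $\phi(x)\Smash\phi(y)$ in $\CL$ is bounded above by $\ep(\infty)=\bigWedge_n a(n)$. This requires knowing that the \emph{only} relations in $\CL$ among the basic classes that the combinatorial rules of $\CA$ fail to capture are exactly the ones governed by the $a(n)$, which is the substance of Lemma~\ref{lem-t-aux-ops} referenced just before the theorem; I would quote that lemma for the $t(q;n)$-type identities and assemble the rest from Section~5. Injectivity is comparatively safe once the explicit order description is transported to $\bCL$, the only care needed being to confirm that none of the ``separating'' classes $\pi k(i)$, $\pi t(q)$, $\pi j(m)$ becomes zero or collapses onto another in the quotient.
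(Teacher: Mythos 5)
Your outline matches the paper's proof, which delegates to Corollary~\ref{cor-bCL-ops} (homomorphism, via Lemmas~\ref{lem-CL-ops}, \ref{lem-t-aux-ops} and~\ref{lem-t-aux}) and Corollary~\ref{cor-inj} (injectivity, via the three maps $\sg_1,\sg_2,\sg_3\:\CL\to\CP$ of Definition~\ref{defn-sg} and their descent in Lemma~\ref{lem-bsg}); your ``coordinate functions'' are exactly $\sg_1,\sg_2,\sg_3$. Two points in your sketch need tightening. First, since $\bCL=\colim_{n<\infty}\CL/\ep(n)$ is a colimit over \emph{finite} $n$, it is not sufficient that the discrepancy between $\phi(x\Smash y)$ and $\phi(x)\Smash\phi(y)$ be $\leq\ep(\infty)$: you must exhibit a finite $n$ with the discrepancy $\leq\ep(n)$, which is what Lemma~\ref{lem-t-aux} actually produces (the excess term $t(q)\Smash a(n)$ is $\leq\ep(n+1)$). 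As stated, ``bounded by $\ep(\infty)$'' would only yield a homomorphism into $\hCL=\CL/\ep(\infty)$, whose injectivity the paper explicitly cannot establish (Remark~\ref{rem-hCL}). Second, for injectivity you must verify that your head-detecting tests are well-defined as functions of $\pi(x)\in\bCL$, not merely of $x\in\CL$; you address this for the tail test via Lemma~\ref{lem-Kn-mod}, but the head tests (smashing with $t(q)$, or testing $x\geq j(i)$) require the analogue of Lemma~\ref{lem-bsg}, whose $\sg_3$ case is genuinely nontrivial: one multiplies $\ep(n)\Wedge x\geq j(i)$ by $f(n)$ and invokes Corollary~\ref{cor-ef} and Lemma~\ref{lem-fj}. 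With those two refinements your argument becomes the paper's.
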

\begin{proof}
 See Corollary~\ref{cor-bCL-ops} and Corollary~\ref{cor-inj} below.
\end{proof}

We must show that the rules in Definition~\ref{defn-A} are valid as
equations in $\bCL$.  In fact, most of them are already valid in
$\CL$:
\begin{lemma}\lbl{lem-CL-ops}
 The rules for $t\Smash j$, $t\Smash k$, $j\Smash j$, $j\Smash k$,
 $k\Smash k$, $j\Wedge j$, $j\Wedge k$ and $k\Wedge k$ are all valid in
 $\CL$. 
\end{lemma}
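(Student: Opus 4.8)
Lemma~\ref{lem-CL-ops} is proved by unfolding the definitions from Definition~\ref{defn-basic} --- namely $j(m,S)=j(m)\Wedge k(S)$, $t(q,T)=t(q;n_0)\Wedge k(T)$ with $t(q;n_0)=t(q)\Smash f(n_0)$ and $[n_0,\infty]\sse T$, and $k(U)=\bigWedge_{i\in U}k(i)$ --- and expanding each product using the distributivity of $\Smash$ over $\Wedge$ (including infinite joins, since $\CL$ is completely distributive by Definition~\ref{defn-L}). After expansion each rule reduces to a short combination of the atomic relations established earlier in this section, so the argument is essentially bookkeeping and no single step is a genuine obstacle; the longest chain of lemmas is needed for the $t\Smash j$ rule.

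Two observations will be used repeatedly. First, $k(U)\Smash j(m)=0$ for every $U\sse\Ni$ and $m\in\Nw$: if $U\neq\emptyset$ this follows by distributing Lemma~\ref{lem-kj} over $k(U)=\bigWedge_{i\in U}k(i)$, and if $U=\emptyset$ then $k(U)=0$. Second, $k(U)\Smash k(U')=k(U\cap U')$ and $k(U)\Wedge k(U')=k(U\cup U')$ by Lemma~\ref{lem-kk}; in particular the rules for $k\Smash k$ and $k\Wedge k$ are immediate.

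For the purely $j$-and-$k$ cases: expanding $j(m,S)\Smash j(m',S')$ produces four terms, of which $j(m)\Smash j(m')=0$ (Lemma~\ref{lem-jj}) and $j(m)\Smash k(S')=k(S)\Smash j(m')=0$ (first observation), leaving $k(S)\Smash k(S')=k(S\cap S')$; similarly $j(m,S)\Smash k(U')$ has terms $j(m)\Smash k(U')=0$ and $k(S)\Smash k(U')=k(S\cap U')$. Expanding $j(m,S)\Wedge j(m',S')$ gives $(j(m)\Wedge j(m'))\Wedge k(S\cup S')$, and $j(m)\Wedge j(m')=j(\max(m,m'))$ by Lemma~\ref{lem-jj-leq}, so the result is $j(\max(m,m'),S\cup S')$ (and $S\cup S'$ is again small). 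Finally $j(m,S)\Wedge k(U')=j(m)\Wedge k(S\cup U')$: if $U'$, hence $S\cup U'$, is small this is by definition $j(m,S\cup U')$, while if $U'$ is big then $S\cup U'$ is big, so $j(m)\leq k(S\cup U')$ by Corollary~\ref{cor-jk-big} and the expression collapses to $k(S\cup U')$.

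For the $t\Smash-$ rules, write $t(q,T)=t(q;n_0)\Wedge k(T)$ and distribute. Then $t(q,T)\Smash k(U')=(t(q;n_0)\Smash k(U'))\Wedge(k(T)\Smash k(U'))$; the first factor equals $k(U'\cap[n_0,\infty])$ by Corollary~\ref{cor-tk}, the second equals $k(T\cap U')$, and since $[n_0,\infty]\sse T$ their join is $k(T\cap U')$. For $t(q,T)\Smash j(m',S')=(t(q,T)\Smash j(m'))\Wedge(t(q,T)\Smash k(S'))$, the second factor is $k(T\cap S')$ by the rule just proved, while the first satisfies $t(q,T)\Smash j(m')=(t(q)\Smash f(n_0)\Smash j(m'))\Wedge(k(T)\Smash j(m'))=t(q)\Smash j(m')$, using $f(n_0)\Smash j(m')=j(m')$ (Lemma~\ref{lem-fj}) and the first observation; by Lemma~\ref{lem-tj} this is $j(m')$ when $q\leq m'$ and $0$ when $q>m'$, so $t(q,T)\Smash j(m',S')$ equals $j(m',T\cap S')$ or $k(T\cap S')$ accordingly. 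The only points requiring attention throughout are the degenerate index sets (empty $U$, and the cosmall set $T$ being automatically nonempty), the fact that the sets appearing as second coordinates of $j$ stay small, and the cases $m=\om$ or $m'=\om$, which are already absorbed into Lemmas~\ref{lem-jj-leq}, \ref{lem-tj} and~\ref{lem-fj}.
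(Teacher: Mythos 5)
Your proof is correct and takes essentially the same approach as the paper: unfold the definitions of $t(q,T)$, $j(m,S)$, $k(U)$, distribute $\Smash$ over $\Wedge$, and reduce each case to the atomic relations from Lemmas~\ref{lem-kk}, \ref{lem-kj}, \ref{lem-jj}, \ref{lem-jj-leq}, \ref{lem-tj}, \ref{lem-fj}, Corollaries~\ref{cor-tk} and~\ref{cor-jk-big}. The only cosmetic difference is that you nest the expansion of $t(q,T)\Smash j(m',S')$ as a pair of binary products while the paper expands into four summands at once; and you correctly cite Lemma~\ref{lem-jj-leq} for $j(m)\Wedge j(m')=j(\max(m,m'))$, where the paper's text misattributes this to Lemma~\ref{lem-jj}.
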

(More concisely, these are all the rules where the right hand side
does not involve $t$.)
\begin{proof}\leavevmode
 \begin{itemize}
  \item Consider the element $x=t(q,T)\Smash j(m',S')$.  Let $n$ be
   minimal such that $[n,\infty]\sse T$.  Then $x$ is the wedge of
   terms $u_1=t(q;n)\Smash j(m')$ and 
   $u_2=t(q;n)\Smash k(S'))$ and $u_3=k(T)\Smash j(m')$
   and $u_4=k(T\cap S')$.  Corollary~\ref{cor-tk} tells us that
   $u_2=k(S'\cap [n,\infty])\leq u_4$.  We have $u_3=0$ by
   Lemma~\ref{lem-kj}, and $u_1=t(q)\Smash j(m')$ by
   Lemma~\ref{lem-fj}.  If $q\leq m'$ then Lemma~\ref{lem-tj} gives
   $u_1=j(m')$ and so $x=u_1\Wedge u_4=j(m',T\cap S')$.  If $q>m'$ then
   the same lemma gives $u_1=0$ and so $x=u_4=k(T\cap S')$.  
  \item Consider the element $x=t(q,T)\Smash k(U')$.  This is the
   wedge of the terms $u_1=t(q;n)\Smash k(U')=k(U'\cap[n,\infty])$ and 
   $u_2=k(T)\Smash k(U')=k(T\cap U')\geq u_1$, so $x=u_2=k(T\cap U')$ as
   required. 
  \item Consider the element $x=j(m,S)\Smash j(m',S')$.  This is the
   wedge of terms $u_1=j(m)\Smash j(m')$ and $u_2=j(m)\Smash k(S')$
   and $u_3=k(S)\Smash j(m')$ and $u_4=k(S\cap S')$.  The first three
   terms are zero by Lemmas~\ref{lem-jj} and~\ref{lem-kj}, so
   $x=u_4=k(S\cap S')$.
  \item Consider the element $x=j(m,S)\Smash k(U')$.  This is the
   wedge of terms $u_1=j(m)\Smash k(U')$ and $u_2=k(S\cap U')$.  We
   have $u_1=0$ by Lemma~\ref{lem-kj} so $x=u_2=k(S\cap U')$.
  \item We know from Lemma~\ref{lem-kk} that
   $k(U)\Smash k(U')=k(U\cap U')$.
  \item Put $x=j(m,S)\Wedge j(m',S')$.  Then
   $x=j(m)\Wedge j(m')\Wedge k(S\cup S')$, but $j(m)\Wedge j(m')=j(\max(m,m'))$ 
   by Lemma~\ref{lem-jj}, which gives $x=j(\max(m,m'),S\cup S')$.
  \item Put $x=j(m,S)\Wedge k(U')=j(m)\Wedge k(S\cup U')$.  If $U'$ is
   big then so is $S\cup U'$, so $j(m)\leq k(S\cup U')$ by
   Corollary~\ref{cor-jk-big}, so $x=k(S\cup U')$.  On the other hand,
   we are assuming that $S$ is small, so if $U'$ is small then
   $S\cup U'$ will also be small, so $j(m,S\cup U')$ is defined and is
   equal to $x$.
  \item We know from Lemma~\ref{lem-kk} that
   $k(U)\Wedge k(U')=k(U\cup U')$.
 \end{itemize}
\end{proof}

For the remaining rules, we have the following modified statement:
\begin{lemma}\lbl{lem-t-aux-ops}
 The following rules are valid in $\CL$ (provided that $n$ is large
 enough for the terms on the left to be defined):
 \begin{align*}
  t(q,T;n)\Smash t(q',T';n) &= t(\max(q,q'),T\cap T';n)) \\
  t(q,T;n)\Wedge t(q',T';n)   &= t(\min(q,q'),T\cup T';n) \\
  t(q,T;n)\Wedge j(m',S')     &= t(q,T\cup S';n) \\
  t(q,T;n)\Wedge k(U')        &= t(q,T\cup U';n).
 \end{align*}
\end{lemma}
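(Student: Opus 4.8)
The plan is to unfold every element on the left into its defining constituents and then collapse the resulting finite wedges using the relations established in Section~5. Recall that $t(q,T;n)=t(q;n)\Wedge k(T)$ with $t(q;n)=t(q)\Smash f(n)$, and that $j(m',S')=j(m')\Wedge k(S')$. The hypothesis that $n$ is large enough means precisely that $[n,\infty]$ is contained in $T$ and in $T'$, hence also in each of $T\cap T'$, $T\cup T'$, $T\cup S'$ and $T\cup U'$, so that all four right-hand sides are legitimately defined with this same $n$; I would note this at the outset.

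First I would treat the $\Smash$ identity, which is the only one with any real content. Writing $t(q,T;n)=t(q;n)\Wedge k(T)$ and $t(q',T';n)=t(q';n)\Wedge k(T')$ and expanding by distributivity of $\Smash$ over $\Wedge$ produces a wedge of four terms. The ``pure'' term is $t(q;n)\Smash t(q';n)=(t(q)\Smash t(q'))\Smash(f(n)\Smash f(n))=t(\max(q,q'))\Smash f(n)=t(\max(q,q');n)$, using Lemma~\ref{lem-tt} and Lemma~\ref{lem-ff}. The ``$k$'' term is $k(T)\Smash k(T')=k(T\cap T')$ by Lemma~\ref{lem-kk}. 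The two cross terms $t(q;n)\Smash k(T')$ and $k(T)\Smash t(q';n)$ both equal $k([n,\infty])$ by Corollary~\ref{cor-tk}, since $[n,\infty]\sse T$ and $[n,\infty]\sse T'$. Finally $k([n,\infty])\leq k(T\cap T')$ (because $[n,\infty]\sse T\cap T'$, using Lemma~\ref{lem-kk}), and $k([n,\infty])\leq t(\max(q,q');n)$ by Corollary~\ref{cor-tk} once more (smashing $t(\max(q,q');n)$ with $k([n,\infty])$ fixes it); so the two cross terms are absorbed and the wedge collapses to $t(\max(q,q');n)\Wedge k(T\cap T')=t(\max(q,q'),T\cap T';n)$.

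For the three $\Wedge$ identities no smash-over-wedge distributivity is needed. For the second, $t(q;n)\Wedge t(q';n)=(t(q)\Wedge t(q'))\Smash f(n)=t(\min(q,q'))\Smash f(n)=t(\min(q,q');n)$ (by distributivity of $\Smash$ over $\Wedge$ and then Lemma~\ref{lem-tt}), while $k(T)\Wedge k(T')=k(T\cup T')$ by Lemma~\ref{lem-kk}, and recombining gives $t(\min(q,q'),T\cup T';n)$. For the third, $j(m')\leq k(\infty)\leq t(q;n)$ by Lemma~\ref{lem-tj}, so $t(q;n)\Wedge j(m')=t(q;n)$, whence $t(q,T;n)\Wedge j(m',S')=t(q;n)\Wedge k(T)\Wedge k(S')=t(q;n)\Wedge k(T\cup S')=t(q,T\cup S';n)$. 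The fourth is immediate: $t(q,T;n)\Wedge k(U')=t(q;n)\Wedge k(T)\Wedge k(U')=t(q;n)\Wedge k(T\cup U')=t(q,T\cup U';n)$.

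The only step that needs any care is the absorption of the cross terms in the $\Smash$ identity, namely checking that $k([n,\infty])$ is dominated both by $k(T\cap T')$ and by $t(\max(q,q');n)$; everything else is routine rewriting with identities already in hand. I anticipate no genuine obstacle, since every ingredient needed ($k(\infty)\leq t(q;n)$ and the smash rule from Lemma~\ref{lem-tj}, monotonicity of $U\mapsto k(U)$ and $k(U)\Smash k(U')=k(U\cap U')$ from Lemma~\ref{lem-kk}, $t(q)\Smash t(q')=t(\max(q,q'))$ from Lemma~\ref{lem-tt}, $f(n)\Smash f(n)=f(n)$ from Lemma~\ref{lem-ff}, and $t(q;n)\Smash k(U)=k(U\cap[n,\infty])$ from Corollary~\ref{cor-tk}) is available from Section~5.
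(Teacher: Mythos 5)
Your proof is correct and takes essentially the same route as the paper's: expand each $t(q,T;n)$ as $t(q;n)\Wedge k(T)$, distribute, evaluate each piece via Lemmas~\ref{lem-tt}, \ref{lem-ff}, \ref{lem-kk}, Corollary~\ref{cor-tk} and the inequality $j(m')\leq k(\infty)\leq t(q;n)$, and absorb the cross terms (which equal $k([n,\infty])$) into $k(T\cap T')$. Your extra observation that $k([n,\infty])\leq t(\max(q,q');n)$ is harmless but redundant, since absorption into $k(T\cap T')$ already suffices.
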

\begin{proof}\leavevmode
 \begin{itemize}
  \item Consider the element $x=t(q,T;n)\Smash t(q',T';n)$.  This is
   the wedge of the terms $u_1=t(q;n)\Smash t(q';n')$ and
   $u_2=t(q;n)\Smash k(T')=k(T'\cap[n,\infty])$ and
   $u_3=k(T)\Smash t(q';n)=k(T\cap[n,\infty])$ and $u_4=k(T\cap T')$.
   We are assuming that $[n,\infty)\sse T$ and $[n,\infty)\sse T'$, so
   $u_2,u_3\leq u_4$.  We also have $u_1=t(\max(q,q');n)$ by
   Lemmas~\ref{lem-tt} and~\ref{lem-ff}.  This leaves
   $x=t(\max(q,q'),T\cap T';n)$.
  \item We have
   $t(q,T;n)\Wedge t(q',T';n)=t(q;n)\Wedge t(q';n)\Wedge k(T\cup T')$, and
   $t(q;n)\Wedge t(q';n)=t(\min(q,q');n)$ by Lemma~\ref{lem-tt}, which
   leaves $t(\min(q,q'),T\cup T';n)$.
  \item Put $x=t(q,T;n)\Wedge j(m',S')$.  Then
   $x=t(q;n)\Wedge k(T\cup S')\Wedge j(m')$, 
   but $j(m')\leq k(\infty)\leq t(q;n)$ by Lemma~\ref{lem-jh}, 
   so we can drop that term, giving $x=t(q,T\cup S';n)$.
  \item We have
   $t(q,T;n)\Wedge k(U')=t(q;n)\Wedge k(T)\Wedge k(U')=
    t(q;n)\Wedge k(T\cup U') = t(q,T\cup U';n)
   $.
 \end{itemize}
\end{proof}

\begin{lemma}\lbl{lem-t-aux}
 In $\bCL$ the element $t(q,T;n)$ is independent of the choice of
 $n$. 
\end{lemma}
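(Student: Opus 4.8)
The plan is to compare two admissible values $n\leq n'$ — note that $[n,\infty]\sse T$ forces $[n',\infty]\sse T$, so both $t(q,T;n)$ and $t(q,T;n')$ are defined — and to show that they already agree in $\CL/\ep(n')$, hence in $\bCL=\colim_m\CL/\ep(m)$. The mechanism is to extract from $t(q,T;n)$ an explicit term $b$ with $t(q,T;n)=b\Wedge t(q,T;n')$ in $\CL$ and $b\leq\ep(n')$, after which the statement is immediate.

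First I would iterate the relation $f(n)=k'(n)\Wedge f(n+1)$ of Lemma~\ref{lem-split} to get $f(n)=\bigl(\bigWedge_{i=n}^{n'-1}k'(i)\bigr)\Wedge f(n')$. Smashing with $t(q)$, distributing over the finite join, and then applying $k'(i)=k(i)\Wedge a(i)$ (Lemma~\ref{lem-at}) and $t(q)\Smash k(i)=k(i)$ (Lemma~\ref{lem-tk}), I obtain
\[
 t(q;n)=\Bigl(\bigWedge_{i=n}^{n'-1}k(i)\Bigr)\Wedge\Bigl(\bigWedge_{i=n}^{n'-1}\bigl(t(q)\Smash a(i)\bigr)\Bigr)\Wedge t(q;n').
\]
Now I would wedge with $k(T)$. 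Since every index $i$ in the range lies in $[n,\infty]\sse T$, we have $k(i)\leq k(T)$, so the whole block $\bigWedge_{i=n}^{n'-1}k(i)$ is absorbed into $k(T)$, and what remains is
\[
 t(q,T;n)=b\Wedge t(q,T;n'),\qquad b:=\bigWedge_{i=n}^{n'-1}\bigl(t(q)\Smash a(i)\bigr).
\]

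To finish, I would observe that $t(q)\Smash a(i)\leq a(i)$ (because $x\Smash y\leq 1\Smash y=y$) and that $a(i)\leq\bigWedge_{j<n'}a(j)=\ep(n')$ for every $i<n'$, so $b\leq\ep(n')$. Writing $\pi\colon\CL\to\CL/\ep(n')$ for $x\mapsto x\Wedge\ep(n')$ and using $b\Wedge\ep(n')=\ep(n')$,
\[
 \pi(t(q,T;n))=t(q,T;n)\Wedge\ep(n')=b\Wedge t(q,T;n')\Wedge\ep(n')=t(q,T;n')\Wedge\ep(n')=\pi(t(q,T;n')).
\]
Since the composite $\CL\to\CL/\ep(n')\to\bCL$ is the canonical map, this gives $t(q,T;n)=t(q,T;n')$ in $\bCL$.

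I do not expect a real obstacle: everything reduces to the bookkeeping in the two displays, and the only delicate point is the absorption of the $k(i)$ terms, which goes through precisely because admissibility of $n$ means $[n,\infty]\sse T$ — the same hypothesis that makes $t(q,T;n)$ well defined in the first place.
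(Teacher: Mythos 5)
Your proof is correct and follows essentially the same route as the paper's. The paper establishes the single-step identity $t(q,T;n)=t(q,T;n+1)\Wedge(t(q)\Smash a(n))$ in $\CL$ (via $f(n)=f(n+1)\Wedge k'(n)$, $k'(n)=k(n)\Wedge a(n)$, $t(q)\Smash k(n)=k(n)$, and absorption of $k(n)$ into $k(T)$), then kills the extra term $t(q)\Smash a(n)\leq\ep(n+1)$ in $\bCL$; you carry out the same computation directly from $n$ to $n'$ by iterating the splitting relation once and for all, which is a cosmetic rather than conceptual difference.
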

\begin{proof}
 It is clear that $t(q,T;n)\geq t(q,T;n+1)$ in $\CL$, and it will
 suffice to show that this becomes an equality in $\bCL$.  We
 have 
 \[ f(n) = f(n+1)\Wedge k'(n) = f(n+1)\Wedge k(n) \Wedge a(n) \]
 by Lemmas~\ref{lem-split} and~\ref{lem-at}.  In conjunction with
 Lemma~\ref{lem-tk} this gives 
 \[ t(q;n) = t(q;n+1) \Wedge k(n)\Wedge (t(q)\Smash a(n)). \]
 However, we are assuming that $[n,\infty]\sse T$, so $n\in T$, so
 $k(n)\Wedge k(T)=k(T)$, so 
 \[ t(q,T;n) = t(q,T;n+1) \Wedge (t(q)\Smash a(n)). \]
 The extra term is less than or equal to $\ep(n+1)$, so it is killed
 by the homomorphism $\CL\to\CL/\ep(n)\to\bCL$.
\end{proof}

\begin{corollary}\lbl{cor-bCL-ops}
 All the relations in Definition~\ref{defn-A} are valid as equations
 in $\bCL$, so the map $\bphi=\pi\phi\:\CA\to\bCL$ is a homomorphism
 of semirings.
\end{corollary}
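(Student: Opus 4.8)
The plan is to dispatch the relations of Definition~\ref{defn-A} in two batches, according to whether or not the symbol $t$ occurs on the right-hand side. First, all of the rules for $t\Smash j$, $t\Smash k$, $j\Smash j$, $j\Smash k$, $k\Smash k$, $j\Wedge j$, $j\Wedge k$ and $k\Wedge k$ — equivalently, every rule whose right-hand side is a $j$- or $k$-symbol — have already been checked to hold in $\CL$ itself in Lemma~\ref{lem-CL-ops}, so they descend to the quotient $\bCL$. The identities $\phi(0_{\CA})=\bc{K(\emptyset)}=\bc{0}=0$ and $\phi(1_{\CA})=\phi(t(0,\Ni))=1$ are immediate (the latter because $T(0)=F(0)=S$, so the class named $t(0,\Ni)$ equals $(\bc{S}\Smash\bc{S})\Wedge k(\Ni)=1_{\CL}\Wedge k(\Ni)=1_{\CL}$). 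This leaves precisely the four relations
\begin{align*}
 t(q,T)\Smash t(q',T') &= t(\max(q,q'),T\cap T'), &
 t(q,T)\Wedge t(q',T') &= t(\min(q,q'),T\cup T'), \\
 t(q,T)\Wedge j(m',S') &= t(q,T\cup S'), &
 t(q,T)\Wedge k(U')    &= t(q,T\cup U'),
\end{align*}
which are the content of Lemma~\ref{lem-t-aux-ops}, except that there the left-hand factors carry explicit offsets and all offsets are assumed equal.

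The remaining point is therefore to reconcile offsets. Under $\phi$, each symbol $t(q,T)$ is sent to the class $t(q,T;n_0)$ with $n_0$ the least integer satisfying $[n_0,\infty]\sse T$ (Definition~\ref{defn-basic}), and the various $t$-symbols occurring in one of the four relations above may carry different such offsets. To handle this I would, for a given instance, pick a single $n$ large enough that $[n,\infty]$ is contained in $T$ and in $T'$ (when the latter occurs); then $[n,\infty]$ is automatically contained in each of $T\cap T'$, $T\cup T'$, $T\cup S'$, $T\cup U'$ as well. By Lemma~\ref{lem-t-aux}, every $t(\cdot,\cdot;m)$-term appearing in the relation is equal in $\bCL$ to its counterpart with the common offset $n$. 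Making these substitutions turns the desired identity into exactly one of the four equations of Lemma~\ref{lem-t-aux-ops}, which is valid in $\CL$ and hence in $\bCL$; a final appeal to Lemma~\ref{lem-t-aux} rewrites the right-hand side in the canonical form $t(\ast,\ast)$.

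Once all the generating relations and the constants $0,1$ are verified in $\bCL$, the homomorphism property of $\bphi=\pi\phi$ follows formally: for $x,y\in\CA$ the product $x\Smash y$ (resp. join $x\Wedge y$) is by definition the $\CA$-element named by the right-hand side of the relevant rule, so $\bphi(x\Smash y)$ is $\pi\phi$ of that element, whereas $\bphi(x)\Smash\bphi(y)=\pi\bigl(\phi(x)\Smash\phi(y)\bigr)$ because $\pi$ is a homomorphism; equality of the two sides is exactly the validity of the rule in $\bCL$. The genuine mathematical work lies in the earlier lemmas — above all in Lemma~\ref{lem-t-aux}, which is where the hypothesis that $\ep(n)$ becomes $0$ in $\bCL$ is actually used — so the only thing requiring care in the corollary itself is the offset bookkeeping described in the previous paragraph, which is purely formal.
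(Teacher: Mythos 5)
Your proof is correct and follows exactly the approach the paper takes, citing the same three lemmas (\ref{lem-CL-ops}, \ref{lem-t-aux-ops}, and~\ref{lem-t-aux}); the paper simply states that the corollary ``is clear'' from these, whereas you have spelled out the offset-bookkeeping step in which Lemma~\ref{lem-t-aux} is used to normalise the various implicit offsets $n_0$ to a common $n$ before applying Lemma~\ref{lem-t-aux-ops}.
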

\begin{proof}
 This is clear from Lemmas~\ref{lem-CL-ops}, \ref{lem-t-aux-ops}
 and~\ref{lem-t-aux}.  
\end{proof}

\begin{remark}\lbl{rem-hCL}
 As well as $\bCL$, we can also consider the object
 $\hCL=\CL/\ep(\infty)$.  The canonical map $\CL\to\hCL$ then factors
 through $\bCL$, so we see that the composite $\CA\to\CL\to\hCL$ is
 also a homomorphism of ordered semirings.  This has the advantage
 that $\hCL$ is completely distributive, which we cannot prove for
 $\bCL$.  However, we do not know whether the map $\CA\to\hCL$ is
 injective. 
\end{remark}

\begin{definition}\lbl{defn-sg}
 We recall that $\CP$ denotes the set of subsets of $\Ni$, and we
 define maps $\sg_i\:\CL\to\CP$ by 
 \begin{align*}
  \sg_1(x) &= \{i\in\Ni \st k(i)\Smash x\neq 0\} 
            = \{i\in\Ni \st k(i)\Smash x=k(i)\}
            = \{i\in\Ni \st x\geq k(i)\} \\
  \sg_2(x) &= \{i\in\Ni \st j(i)\Smash x\neq 0\} \\
  \sg_3(x) &= \{i\in\Ni \st x \geq j(i)\}.
 \end{align*}
 (The three versions of $\sg_1$ agree by Lemma~\ref{lem-Kn-mod}.)
\end{definition}

\begin{lemma}\lbl{lem-bsg}
 There are maps $\bsg_r\:\bCL\to\CP$ (for $r=0,1,2$) with
 $\bsg_r\circ\pi=\sg_r\:\CL\to\CP$.  
\end{lemma}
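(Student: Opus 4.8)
The plan is to use the colimit presentation $\bCL=\colim_{n<\infty}\CL/\ep(n)$ directly. The transition map $\CL/\ep(n)\to\CL/\ep(m)$ for $n\leq m$ is $a\mapsto a\Wedge\ep(m)$ (since $\ep(m)\geq\ep(n)$), and $\pi(x)$ is the class of $x\Wedge\ep(n)$ for every $n$; so by the usual description of a filtered colimit of sets, $\pi(x)=\pi(y)$ precisely when $x\Wedge\ep(n)=y\Wedge\ep(n)$ for some $n\in\N$. Since $\pi$ is surjective, a (necessarily unique) function $\bsg_r$ with $\bsg_r\circ\pi=\sg_r$ exists iff $\sg_r$ is constant on the fibres of $\pi$; hence it suffices to prove that $\sg_r(x)=\sg_r(x\Wedge\ep(n))$ for all $x\in\CL$ and $n\in\N$.

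For $\sg_1$ I would compute $k(i)\Smash(x\Wedge\ep(n))$ using distributivity. Since $\ep(n)=\bigWedge_{m<n}a(m)$ is a finite join, $k(i)\Smash\ep(n)=\bigWedge_{m<n}\bigl(k(i)\Smash a(m)\bigr)$, and each term vanishes by Lemma~\ref{lem-at}; thus $k(i)\Smash\ep(n)=0$ and $k(i)\Smash(x\Wedge\ep(n))=k(i)\Smash x$. Consequently $k(i)\Smash(x\Wedge\ep(n))\neq0$ iff $k(i)\Smash x\neq0$, i.e.\ $\sg_1(x)=\sg_1(x\Wedge\ep(n))$. The identical argument with $j(i)$ in place of $k(i)$, now invoking $a(m)\Smash j(i)=0$ from Lemma~\ref{lem-fj}, gives $j(i)\Smash(x\Wedge\ep(n))=j(i)\Smash x$ and hence $\sg_2(x)=\sg_2(x\Wedge\ep(n))$.

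For $\sg_3$ one inclusion is free: $x\leq x\Wedge\ep(n)$, so $j(i)\leq x$ implies $j(i)\leq x\Wedge\ep(n)$. For the converse, I would apply the order-preserving operation $f(n)\Smash(-)$ to the inequality $j(i)\leq x\Wedge\ep(n)$. Since $f(n)\Smash\ep(n)=0$ (Corollary~\ref{cor-ef}), distributivity gives $f(n)\Smash(x\Wedge\ep(n))=f(n)\Smash x\leq x$, while $f(n)\Smash j(i)=j(i)$ by Lemma~\ref{lem-fj}; combining these yields $j(i)\leq x$. So $\sg_3(x)=\sg_3(x\Wedge\ep(n))$ as well, which completes the argument. (No further work is needed to extract a single map from the colimit: compatibility with the transition maps is exactly the relation $\sg_r(x)=\sg_r(x\Wedge\ep(n+1))$ just established.)

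The only step that needs an idea rather than a routine calculation is the reverse inclusion for $\sg_3$: because $j(i)$ is not idempotent --- indeed $j(i)\Smash j(i)=0$ by Lemma~\ref{lem-jj} --- one cannot ``cancel'' the extra join-summand $\ep(n)$ by multiplying through by $j(i)$, as one effectively does in the $\sg_1$ and $\sg_2$ cases. The trick is to multiply instead by $f(n)$, which is idempotent, fixes every $j(i)$, and annihilates $\ep(n)$; once this is spotted the computation is immediate. Everything else is a direct appeal to Lemmas~\ref{lem-at} and~\ref{lem-fj} and Corollary~\ref{cor-ef}.
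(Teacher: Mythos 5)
Your proposal is correct and takes essentially the same approach as the paper: for $\sg_1$ and $\sg_2$ you smash with $k(i)$ or $j(i)$ and kill $\ep(n)$ via Lemmas~\ref{lem-at} and~\ref{lem-fj}, and for $\sg_3$ you apply $f(n)\Smash(-)$, using Corollary~\ref{cor-ef} and $f(n)\Smash j(i)=j(i)$, exactly as the paper does. The only difference is that you spell out the routine reduction via the colimit presentation of $\bCL$, which the paper leaves implicit.
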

\begin{proof}
 Lemmas~\ref{lem-at} and~\ref{lem-fj} show that
 $k(i)\Smash\ep(\infty)=j(i)\Smash\ep(\infty)=0$.  It follows that
 when $r\leq 2$ we have $\sg_r(\ep(n)\Wedge x)=\sg_r(x)$ for all
 $n\in\Ni$ and all $x\in\CL$.  This means that $\sg_1$ and $\sg_2$
 factor through $\bCL$ (or even $\hCL$) as claimed.

 Now consider $\sg_3$.  If $x\geq j(i)$, then of course
 $\ep(n)\Wedge x\geq j(i)$ for all $n\in\N$.  Conversely, suppose that
 $n\in\N$ and $\ep(n)\Wedge x\geq j(i)$.  It follows that 
 \[ f(n)\Smash (\ep(n)\Wedge x) \geq f(n)\Smash j(i). \]
 The right hand side is $j(i)$ by Lemma~\ref{lem-fj}.  On the left
 hand side, we have $f(n)\Smash\ep(n)=0$ by Corollary~\ref{cor-ef}.
 We therefore have $x\geq f(n)\Smash x\geq j(i)$.
 Putting this together, we see that $\sg_3(\ep(n)\Wedge x)=\sg_3(x)$
 for all $n\in\N$, so $\sg_3$ factors through $\bCL$.  (It is not
 clear, however, whether $\sg_3$ factors through $\hCL$.)
\end{proof}

\begin{corollary}\lbl{cor-inj}
 The map $\bphi\:\CA\to\bCL$ is injective.
\end{corollary}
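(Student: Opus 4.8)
We want $\bphi(x)=\bphi(x')$ to force $x=x'$, and the plan is to separate the elements of $\CA$ by means of the three functions $\sg_1\circ\phi,\sg_2\circ\phi,\sg_3\circ\phi\colon\CA\to\CP$. By Lemma~\ref{lem-bsg} each $\sg_r$ factors as $\bsg_r\circ\pi$, and $\bphi=\pi\phi$ by Definition~\ref{defn-bCL}, so $\bphi(x)=\bphi(x')$ implies $\sg_r(\phi(x))=\sg_r(\phi(x'))$ for $r=1,2,3$. It therefore suffices to reconstruct $x$ -- as one of the elements $t(q,T)$, $j(m,S)$, $k(U)$ of Definition~\ref{defn-A} -- from these three subsets of $\Ni$.

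First I would compute $\sg_1\circ\phi$. Expanding $k(U)=\bigWedge_{i\in U}k(i)$ and using Lemmas~\ref{lem-kk} and~\ref{lem-kj}, one finds that $k(i)\Smash\phi(k(U))$ and $k(i)\Smash\phi(j(m,S))$ equal $k(i)$ precisely when $i$ lies in $U$, resp.\ $S$, and vanish otherwise; for $x=t(q,T)=t(q,T;n_0)$ (with $[n_0,\infty]\sse T$) the same holds with the relevant set being $T$, using Corollary~\ref{cor-tk} and Lemma~\ref{lem-kk}. Hence $\sg_1\circ\phi=\tail$, which recovers the set attached to $x$. Next I would compute $\sg_2\circ\phi$. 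For $x\in\CA_j\cup\CA_k$, Lemmas~\ref{lem-jj} and~\ref{lem-kj} give $j(i)\Smash\phi(x)=0$ for all $i$, so $\sg_2(\phi(x))=\emptyset$. For $x=t(q,T)$, Lemma~\ref{lem-kj} kills the $k(T)$ summand while Lemmas~\ref{lem-fj} and~\ref{lem-tj} identify $j(i)\Smash\phi(t(q,T))$ with $j(i)\Smash t(q)$, which is $j(i)$ for $i\geq q$ and $0$ for $i<q$; since $J(i)=IT(i)\neq0$ this gives $\sg_2(\phi(t(q,T)))=[q,\infty]$. Thus $\sg_2\circ\phi$ is nonempty exactly on $\CA_t$, and on $\CA_t$ it recovers $q$ as its minimum, so $\sg_1$ and $\sg_2$ together determine every element of $\CA_t$ and separate $\CA_t$ from $\CA_j\cup\CA_k$.

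It remains to treat $x\in\CA_j\cup\CA_k$, where I would compute $\sg_3\circ\phi$. If $x=k(U)$ with $U$ big, then $j(i)\leq k(U)$ for all $i$ by Corollary~\ref{cor-jk-big}, so $\sg_3(\phi(x))=\Ni$; if $x=k(U)$ with $U$ small (so $U\sse[0,N)$ for some $N$), then $F(N)$ satisfies $K(U)\Smash F(N)=0$ by Lemma~\ref{lem-kf} but $J(i)\Smash F(N)\neq0$ by Lemma~\ref{lem-fj}, so $j(i)\not\leq k(U)$ and $\sg_3(\phi(x))=\emptyset$. If $x=j(m,S)$, then Lemma~\ref{lem-jj-leq} gives $j(i)\leq j(m)\leq j(m,S)$ for $i\leq m$; conversely, for $i>m$, multiplying a hypothetical relation $j(i)\leq j(m)\Wedge k(S)$ by $t(m+1)$ (or by $t(\infty)$ when $m=\om$) and invoking Lemmas~\ref{lem-tj} and~\ref{lem-tk} reduces it to $j(i)\leq k(S)$ with $S$ small, which the previous case has excluded. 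Hence $\sg_3(\phi(j(m,S)))=\{i\in\Ni\st i\leq m\}$, which is finite with maximum $m$ when $m\in\N$, equals $\N$ when $m=\om$, and equals $\Ni$ when $m=\infty$. Since a $k$-element has $\sg_3(\phi(x))\in\{\emptyset,\Ni\}$ while a $j$-element always has $0\in\sg_3(\phi(x))$, and since a $j$-element has a small tail whereas a $k$-element with $\sg_3(\phi(x))=\Ni$ has a big tail, the pair $\bigl(\sg_1(\phi(x)),\sg_3(\phi(x))\bigr)$ determines which of $\CA_j,\CA_k$ contains $x$ together with all of its attached data. This completes the reconstruction of $x$ from $(\sg_1(\phi(x)),\sg_2(\phi(x)),\sg_3(\phi(x)))$, and hence the injectivity of $\bphi$.

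The main obstacle is the computation of $\sg_3\circ\phi$ on the $j$-elements -- in particular the claim that $j(i)\not\leq j(m,S)$ in $\CL$ for $i>m$, which needs the multiplication-by-$t(m+1)$ device above -- together with keeping the special values $m=\om$, $m=\infty$ and $i=\infty$ straight throughout; the remaining computations are routine bookkeeping with the relations established earlier.
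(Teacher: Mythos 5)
Your proposal is correct and follows essentially the same route as the paper: you factor $\sg_r$ through $\bCL$ via Lemma~\ref{lem-bsg}, compute the triple $(\sg_1,\sg_2,\sg_3)\circ\phi$ on each of $\CA_t$, $\CA_j$, $\CA_k$, and then reconstruct $x$ from these three subsets of $\Ni$. The paper states the table of values with the remark that it is ``easy to check'' and gives the same reconstruction recipe; you have simply filled in the verifications (in particular the multiplication-by-$t(m+1)$ device showing $j(i)\not\leq j(m,S)$ for $i>m$), which is fine.
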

\begin{proof}
 It is easy to check the following table of values of the maps $\sg_r$:
 \[ \begin{array}{|cl|c|c|c|} 
     \hline
            &               & \sg_1 & \sg_2      & \sg_3        \\ \hline
     t(q,T) &               & T     & [q,\infty] & [0,\infty]   \\ \hline
     j(m,S) &               & S     & \emptyset  & [0,m]\cap\Ni \\ \hline
     k(U)   &\text{(small)} & U     & \emptyset  & \emptyset    \\ \hline
     k(U)   &\text{(big)}   & U     & \emptyset  & [0,\infty]   \\ \hline
    \end{array}
 \]
 (In particular, we have $\sg_3(j(\om,S))=\N$ but $\sg_3(j(\infty,S))=\Ni$.)
 Now consider an element $x\in\CA$ with $\sg\phi(x)=u\in\CP^3$.  We
 see that: 
 \begin{itemize}
  \item If $u_2\neq\emptyset$ then $x=t(\min(u_2),u_1)$.
  \item If $u_2=\emptyset$ and $u_1$ is small and $u_3\neq\emptyset$
   then $x=j(\sup(u_3),u_1)$ (where the supremum is taken in $\Nw$).
  \item If $u_2=\emptyset$ and $u_1$ is small and $u_3=\emptyset$
   then $x=k(u_1)$.
  \item If $u_2=\emptyset$ and $u_1$ is big then $x=k(u_1)$.
 \end{itemize}
 This means that the composite $\sg\phi$ is injective, but this is the
 same as $\bsg\bphi$, so $\bphi$ is injective.
\end{proof}

We do not know whether $\bCL$ is complete.  However, we do have the
following partial result.

\begin{proposition}\lbl{prop-phi-join}
 Let $S$ be any subset of $\CA$, and put $a=\bigWedge S\in\CA$.  Then
 $\bphi(a)$ is the least upper bound for $\bphi(S)$ in $\bCL$.
\end{proposition}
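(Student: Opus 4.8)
The plan is to use that $\bphi$ is a homomorphism of ordered semirings (Corollary~\ref{cor-bCL-ops}), the explicit formula for least upper bounds in $\CA$ provided by the operator $\tht$ (Definition~\ref{defn-tht}), and the invariants $\bsg_1,\bsg_3\colon\bCL\to\CP$ of Lemma~\ref{lem-bsg}. Since $\bphi$ is order-preserving and $a\geq s$ for every $s\in S$, the element $\bphi(a)$ is automatically an upper bound for $\bphi(S)$, so the content of the proposition is that it is the smallest one. Fix an arbitrary upper bound $b\in\bCL$ for $\bphi(S)$ and choose a lift $\tilde b\in\CL$ with $\pi(\tilde b)=b$; we must show $b\geq\bphi(a)$.

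First I would reduce to the case that $S$ is an ideal. Let $\CI\sse\CA$ be the ideal generated by $S$. Each $u\in\CI$ has the form $\bigWedge_{i=1}^n y_i\Smash z_i$ with $z_i\in S$, and $y_i\Smash z_i\leq z_i$, so $S$ and $\CI$ have the same upper bounds in $\CA$; hence $\bigWedge\CI=\bigWedge S=a$. Applying the homomorphism $\bphi$ to the same expression shows that $b$ is also an upper bound for $\bphi(\CI)$. We may therefore assume $S=\CI$, and then $a=\tht(\CI)$ by Proposition~\ref{prop-cd} and Lemmas~\ref{lem-tht-max}, \ref{lem-tht-a} and~\ref{lem-tht-b}. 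I would now run through the four cases of Definition~\ref{defn-tht}. In cases (c) and (d), Lemma~\ref{lem-tht-max} gives $a\in\CI=S$, hence $\bphi(a)\in\bphi(S)$ and $\bphi(a)\leq b$ with nothing more to prove.

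In case (a) we have $a=k(A)$, and $k(i)\in\CI=S$ for every $i\in A$ by Lemma~\ref{lem-tht-k}, so $b\geq\bphi(k(i))$. Since $\bsg_1$ is order-preserving with $\bsg_1\circ\pi=\sg_1$, this gives $\bsg_1(b)\supseteq\sg_1(\bc{K(i)})\ni i$, so $A\sse\bsg_1(b)=\sg_1(\tilde b)=\{i\in\Ni\st\tilde b\geq\bc{K(i)}\}$. Thus $\tilde b$ is an upper bound in $\CL$ for $\{\bc{K(i)}\st i\in A\}$, whence $\tilde b\geq\bigWedge_{i\in A}\bc{K(i)}=\bc{K(A)}=\phi(a)$, because least upper bounds in $\CL$ are computed by wedging (Definition~\ref{defn-L}); applying $\pi$ gives $b\geq\bphi(a)$. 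Case (b), with $a=j(\om,A)$, runs the same way with the help of $\bsg_3$: by Lemma~\ref{lem-tht-b} the set $A$ is small and $M$ is an infinite subset of $\N$ with $j(m,A)\in\CI$ for all $m\in M$, while $k(i)\in\CI$ for $i\in A$. Exactly as before $\tilde b\geq\bc{K(A)}$. Moreover $j(m)\leq j(m,A)$ in $\CL$ (since $j(m,A)=j(m)\Wedge k(A)$ is an upper bound for $j(m)$), so $M\sse\bsg_3(b)=\sg_3(\tilde b)=\{i\in\Ni\st\tilde b\geq\bc{J(i)}\}$; as $M$ is cofinal in $\N$ and $j(m)\leq j(m')$ for $m\leq m'$ (Lemma~\ref{lem-jj-leq}), we get $\tilde b\geq\bigWedge_{m\in\N}\bc{J(m)}=\bc{J(\om)}$. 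Combining, $\tilde b\geq\bc{K(A)}\Wedge\bc{J(\om)}=\phi(j(\om,A))=\phi(a)$, and $b\geq\bphi(a)$ once more.

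The only step where anything substantial happens is cases (a) and (b), where $\bigWedge S$ need not lie in the ideal it generates; the device that makes them go through is to detect the desired inequality after applying the order-preserving invariants $\bsg_1$ and $\bsg_3$, which trades the poorly understood order of $\bCL$ for the genuinely completely distributive order of $\CL$, where the relevant infinite joins $\bc{K(A)}=\bigWedge_{i\in A}\bc{K(i)}$ and $\bc{J(\om)}=\bigWedge_{m\in\N}\bc{J(m)}$ are under control. The main thing to verify carefully is that $\bsg_1$ and $\bsg_3$ are order-preserving and satisfy $\bsg_r\circ\pi=\sg_r$ on the nose, which follows from Lemma~\ref{lem-bsg} and its proof.
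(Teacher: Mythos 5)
Your argument is correct and has the same overall shape as the paper's proof: reduce to an ideal $\CI$, split into the four cases of Definition~\ref{defn-tht}, and dispose of cases (c) and (d) immediately via Lemma~\ref{lem-tht-max}. Where you genuinely diverge is in cases (a) and (b). The paper's proof picks a lift $v\in\CL$ of the upper bound and, for each generator $\phi(x)$ with $x\in\CI$, strips the $\ep(n)$ off the relation $\ep(n)\Wedge v\geq\phi(x)$ by smashing with $k(i)$ (case (a)) or $f(n)$ (case (b)) and invoking $\ep(n)\Smash k(i)=0$ and $\ep(n)\Smash f(n)=0$ together with $f(n)\Smash j(m)=j(m)$. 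You instead push the inequality $b\geq\bphi(x)$ through the invariants $\bsg_1$ and $\bsg_3$, land back in $\CL$ on the lift $\tilde b$, and close using complete distributivity of $\CL$. The two arguments are built from the same vanishings — they are exactly what Lemma~\ref{lem-bsg} uses to show $\bsg_r$ is well defined — but your packaging is a little cleaner: the ``kill $\ep(n)$'' step is done once in Lemma~\ref{lem-bsg} and not re-derived, and the remaining work is pure order theory in $\CL$.

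One point you should not leave as a throwaway is that $\bsg_1$ and $\bsg_3$ are order-preserving on $\bCL$. Lemma~\ref{lem-bsg} only asserts $\bsg_r\circ\pi=\sg_r$, and monotonicity of $\sg_r$ on $\CL$ does not by itself yield monotonicity of $\bsg_r$, since lifts of a pair $b\leq b'$ need not be comparable in $\CL$. What does it is: $b\leq b'$ in $\bCL$ means $\tilde b\Wedge\ep(n)\leq\tilde b'\Wedge\ep(n)$ in $\CL$ for some $n\in\N$; apply $\sg_r$ and then use the identity $\sg_r(\ep(n)\Wedge x)=\sg_r(x)$ established in the proof of Lemma~\ref{lem-bsg} to convert both sides to $\bsg_r(b)$ and $\bsg_r(b')$. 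It is a two-line argument, but it is where the substance of cases (a) and (b) is actually concealed, so it deserves to be written out.
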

\begin{proof}
 Let $V$ denote the set of upper bounds for $\bphi(S)$.  As $\bphi$ is
 a homomorphism of ordered semirings, it is clear that
 $\bphi(a)\in V$.  We must show that $\bphi(a)$ is the smallest
 element of $V$.

 Now let $\CI$ denote the ideal in $\CA$ generated by $S$, so $a$ is
 also the least upper bound for $\CI$.  This means that $a=\tht(\CI)$,
 where $\tht$ is as in Definition~\ref{defn-tht}.

 Note that the set
 \[ S' = \{x\in\CA\st\bphi(x)\leq v \text{ for all } v\in V\} \]
 is an ideal containing $S$, so it contains $\CI$.  This means that
 $V$ is also the set of upper bounds for $\bphi(\CI)$.

 In cases~(c) and~(d) of Definition~\ref{defn-tht},
 Lemma~\ref{lem-tht-max} tells us that $a\in\CI$, and the claim
 follows immediately from this.  We therefore need only consider
 cases~(a) and~(b), in which $\CI\sse\CA_j\cup\CA_k$.  

 Now let $v$ be an element of $\CL$ such that the image
 $\pi(v)\in\bCL$ lies in $V$.  This means that for all $x\in\CI$ there
 exists $n\in\N$ such that $\ep(n)\Wedge v\geq\phi(x)$ in $\CL$.  We
 must show that there exists $m$ such that $\ep(m)\Wedge v\geq\phi(a)$.

 Recall that
 \[ A = \bigcup_{x\in\CI}\tail(x) \sse \Ni, \]
 and Lemma~\ref{lem-tht-k} tells us that $k(i)\in\CI$ for all
 $i\in A$.  This means that $\ep(n)\Wedge v\geq k(i)$ for some
 $n\in\N$.  Lemma~\ref{lem-at} tells us that $\ep(n)\Smash k(i)=0$, so
 we can multiply the above relation by $k(i)$ to get
 \[ v \geq k(i)\Smash v \geq k(i)\Smash k(i) = k(i) \]
 in $\CL$.  This holds for all $i\in A$, and the element $k(A)\in\CL$
 is by definition the least upper bound in $\CL$ of the elements
 $k(i)$ with $i\in A$, so we get $v\geq k(A)$ in $\CL$.  In case~(a),
 this is already the desired conclusion.

 Finally, we consider case~(b), in which $A$ is small but the set 
 \[ M = \{m\in\Nw\st j(m)\in\head(\CI)\} \]
 is an infinite subset of $\N$.  Lemma~\ref{lem-tht-b} tells us that
 $j(m,A)\in\CI$ for all $m\in M$.  Thus, for each $m\in M$ there
 exists $n\in\N$ such that $\ep(n)\Wedge v\geq j(m,A)\geq j(m)$.  We
 now multiply this relation by $f(n)$.  Corollary~\ref{cor-ef} tells
 us that $f(n)\Smash \ep(n)=0$, and Lemma~\ref{lem-fj} gives
 $f(n)\Smash j(m)=j(m)$, so we have 
 \[ v \geq f(n)\Smash v \geq j(m). \]
 Now recall that $j(i)\leq j(i+1)$ for $i\in\N$, and that the element
 $j(\om)\in\CL$ is by definition the join in $\CL$ of these elements
 $j(i)$.  We therefore have $v\geq j(\om)$ in $\CL$, and we have
 already seen that $v\geq k(A)$,
 so $v\geq j(\om)\Wedge k(A)=j(\om,A)=a$ as required.
\end{proof}

\section{Index of popular Bousfield classes}

We next give a list of spectra $X$ together with corresponding
elements $x\in\CA$.  We write $X=x$ to indicate that $\bc{X}=\phi(x)$
in $\CL$, or $X\simeq x$ to indicate that $\pi\bc{X}=\pi\phi(x)$ in
$\bCL$.  As usual, everything is implicitly $p$-localised.

\begin{align}
 0 &= k(\emptyset)                                            \label{eq-O}   \\
 S = S^\wedge_p = T(0) &= t(0,\Ni)                            \label{eq-S}   \\
 S/p = S/p^\infty &= t(0,[1,\infty])                          \label{eq-Sp}  \\
 F(n) &= t(0,[n,\infty])                                      \label{eq-Fn}  \\ 
 H\Q = S\Q = I(H\Q) &= k(\{0\})                               \label{eq-HQ}  \\
 H/p = H/p^\infty = I(H) = I(H/p) = I(BP\ip{n}) &= k(\{\infty\}) \label{eq-Hp} \\
 H &= k(\{0,\infty\})                                         \label{eq-H}   \\
 v_n^{-1}F(n) = K'(n) &\simeq k(\{n\})                        \label{eq-Kn}  \\
 T(q) &= t(q,\Ni)                                             \label{eq-Tq}  \\
 BP = BP^\wedge_p = T(\infty) &= t(\infty,\Ni)                \label{eq-BP}  \\
 P(n) = BP/I_n &= t(\infty,[n,\infty])                        \label{eq-Pn}  \\
 B(n) = v_n^{-1}P(n) = K(n) = M_nS &= k(\{n\})                \label{eq-Bn}  \\
 IB(n) =  IK(n) &= k(\{n\})                                   \label{eq-IKn} \\
 E(n) = v_n^{-1}BP\ip{n} = v_n^{-1}BP = L_nS &= k([0,n])      \label{eq-En}  \\
 \widehat{E(n)} = L_{K(n)}S &= k([0,n])                       \label{eq-Enh} \\
 C_nS &\simeq t(0,[n+1,\infty])                               \label{eq-CnS} \\
 BP\ip{n} &= k([0,n]\cup\{\infty\})                           \label{eq-BPn} \\
 BP\ip{n}/I_n &= k(\{n,\infty\})                              \label{eq-kn}  \\
 KU = KO &= k(\{0,1\})                                        \label{eq-KU}  \\
 kU = kO &= k(\{0,1,\infty\})                                 \label{eq-kU}  \\
 Ell = TMF &= k(\{0,1,2\})                                    \label{eq-Ell} \\
 I(S) = I(T(0)) = I(F(n)) &= j(0,\emptyset)                   \label{eq-I}   \\
 I(S^\wedge_p) = I(S/p^\infty) &= j(0,\{0\})                  \label{eq-IQ}  \\
 I(T(m)) = I(T(m)\Smash F(n)) &= j(m,\emptyset)               \label{eq-J}
\end{align}

\begin{proof}\leavevmode
 \begin{itemize}
  \item[(\ref{eq-O})] Clear from the definitions. 
  \item[(\ref{eq-S})] Clear from the definitions together with
   Proposition~\ref{prop-bc-omni}(e).
  \item[(\ref{eq-Fn})] Clear from the definitions.
  \item[(\ref{eq-Sp})] From~(\ref{eq-Fn}) we see that 
   $t(0,[1,\infty])$ is the same as 
   \[ F(1)=F(U(1),U(1))=F(S/p,S/p)= D(S/p)\Smash S/p, \]
   and it is easy to check that this has the same Bousfield class as
   $S/p$.  Now $S/p^\infty$ can be described as the homotopy colimit
   of the spectra $S/p^n$, or as the cofibre of the map
   $S\to S[p^{-1}]$.  From the first description (together with the
   cofibrations $S/p^n\to S/p^{n+1}\to S/p$) we see that
   $\bc{S/p^\infty}\leq\bc{S/p}$.  The second description shows that
   $S/p^\infty\Smash \Sg^{-1}S/p\simeq S/p$, which gives
   $\bc{S/p}\leq\bc{S/p^\infty}$, so we have
   $\bc{S/p^\infty}=\bc{S/p}$.
  \item[(\ref{eq-HQ})] By definition we have $H\Q=k(\{0\})$, and it
   is standard that this is the same as $S\Q$.  Moreover, $I(H\Q)$ is
   a module over $H\Q$, so Proposition~\ref{prop-bc-omni}(b) tells us
   that the Bousfield class is the same as $H\Q$ provided that
   $I(H\Q)\neq 0$.  By definition we have
   $\pi_0(I(H\Q))=\Hom(\Q,\QZpl)$, which is nontrivial as required.
  \item[(\ref{eq-Hp})] By definition we have $H/p=k(\{\infty\})$, and
   this is the same as $H/p^\infty$ as a consequence
   of~(\ref{eq-Sp}).  Note that $\pi_*(I(H))=\Hom(\pi_{-*}(H),\Z/p^\infty)$,
   which is a copy of $\Z/p^\infty$ concentrated in degree zero, so
   $I(H)=H/p^\infty$.  A similar argument gives $I(H/p)=H/p$.  We next
   consider the classes $u(n)=\bc{I(BP\ip{n})}$.  These start with
   $u(0)=\bc{IH}=k(\infty)$, so it will suffice to prove that
   $u(n)=u(n-1)$ when $n>0$.  Proposition~\ref{prop-bc-omni}(c) gives
   \[ u(n)=\bc{I(BP\ip{n})/v_n} \Wedge\bc{I(BP\ip{n})[v_n^{-1}]}, \]
   and the first term is the same up to suspension as $u(n-1)$.  The
   second term is the colimit of the spectra $\Sg^{-k|v_n|}IBP\ip{n}$,
   which is trivial because the homotopy of $IBP\ip{n}$ is
   concentrated in nonpositive degrees.  The claim follows.
  \item[(\ref{eq-H})] Proposition~\ref{prop-bc-omni}(e) gives
   $\bc{H}=\bc{H/p}\Wedge\bc{H\Q}$, and this is $k(\{0,\infty\})$ by
   definition. 
  \item[(\ref{eq-Kn})] We have $K'(n)=v_n^{-1}F(n)$ by definition.  If
   TC fails, then this may be different from $k(\{n\})$ in $\CL$.
   However, Lemma~\ref{lem-at} tells us that $k'(n)=a(n)\Wedge k(n)$,
   so $K'(n)$ and $k(n)$ have the same image in $\bCL$, as we indicate
   by writing $K'(n)\simeq k(n)$. 
  \item[(\ref{eq-Tq})] True by definition.
  \item[(\ref{eq-BP})] Clear from the definitions together with
   Proposition~\ref{prop-bc-omni}(e).
  \item[(\ref{eq-Pn})] We have $P(n)=BP/I_n$ by definition.  Now
   consider a more general spectrum of the form $BP/J$, where $J$ is
   generated by an invariant regular sequence of length $n$.  Ravenel
   proved as~\cite[Theorem 2.1(g)]{ra:lrc} that
   $\bc{BP/J}=\bc{P(n)}$.  Using the theory of generalised Moore
   spectra~\cite[Chapter 6]{ra:nps}\cite[Chapter 4]{host:mkl} 
   we see that for suitable $J$ there is a finite spectrum $S/J$ of
   type $n$ such that $BP/J=BP\Smash S/J$.  By the Thick Subcategory
   Theorem we have $\bc{S/J}=f(n)$ and so
   $\bc{P(n)}=\bc{BP}\Smash f(n)=t(\infty,[n,\infty])$.
  \item[(\ref{eq-Bn})] We have $K(n)=k(\{n\})$ by definition, and the
   spectrum $B(n)=v_n^{-1}P(n)$ has the same Bousfield class
   by~\cite[Theorem 2.1(a)]{ra:lrc}.  We will discuss $M_nS$
   under~(\ref{eq-En}). 
  \item[(\ref{eq-IKn})] We first note that $IK(n)$ is a $K(n)$-module,
   and all $K(n)$-modules are free, and the homotopy groups of $IK(n)$
   have the same order as those of $K(n)$, so $IK(n)\simeq K(n)$ as
   spectra, so certainly $\bc{IK(n)}=k(\{n\})$.  Next, note that
   $IB(n)$ is a $B(n)$-module, so
   \[ \bc{IB(n)} = \bc{B(n)}\Smash \bc{IB(n)} = 
       \bc{K(n)} \Smash \bc{IB(n)},
   \]
   and this is either zero or $k(\{n\})$ by Lemma~\ref{lem-Kn-mod}.
   It cannot be zero because 
   \[ \pi_0(IB(n))=\Hom(\pi_0B(n),\QZpl)\neq 0, \] 
   so it must be $k(\{n\})$ as claimed.
  \item[(\ref{eq-En})] We have $E(n)=v_n^{-1}BP\ip{n}$ by definition.
   This has the same Bousfield class as $v_n^{-1}BP$ and as
   $K(\{0,\dotsc,n\})$ by parts~(b) and~(d)
   of~\cite[Theorem 2.1]{ra:lrc}.  Note that $E(n)\Smash X=0$ iff
   $L_nX=0$, which is equivalent to $L_nS\Smash X=0$
   by~\cite[Theorem 7.5.6]{ra:nps}.  This means that $L_nS$ also has
   the same Bousfield class.  Finally, recall that
   $M_nS=C_{n-1}L_nS=C_{n-1}S\Smash L_nS$.  This gives
   \[ \bc{M_nS}=\bc{\bigWedge_{i\leq n}K(i)\Smash C_{n-1}S}
               =\bc{\bigWedge_{i\leq n}C_{n-1}(K(i))}. 
   \]
   Here $K(i)$ is $E(n-1)$-local for $i<n$, and $E(n-1)$-acyclic for
   $i=n$, which gives $\bc{M_nS}=k(n)$ as claimed in~(\ref{eq-Bn}).
  \item[(\ref{eq-Enh})] This is part
   of~\cite[Proposition 5.3]{host:mkl} (where $\widehat{E(n)}$ is
   denoted by $E$, and $L_{K(n)}S$ is denoted by $\widehat{L}S$).  
  \item[(\ref{eq-CnS})] We know from~(\ref{eq-Fn}) that
   $t(0,[n+1,\infty])=f(n+1)$, and we must show that this becomes the
   same as $C_nS$ in $\bCL$.  Put
   \begin{align*}
    e  &= \bc{E(n)} = \bc{L_nS} = \bigWedge_{i\leq n}k(i) &
    e' &= \bigWedge_{i\leq n} k'(i) \\
    f  &= f(n+1) &
    f' &= \bc{C_nS}. 
   \end{align*}
   An induction based on Lemma~\ref{lem-split} gives
   $1=f(0)=e'\Wedge f$, with $e'\Smash f=0$.  Next, $C_nS$
   is by definition the fibre of the localisation map $S\to L_nS$, and
   this fibre sequence gives $1=e\Wedge f'$.  Moreover, $C_nS$
   and $F(n+1)$ are both $E(n)$-acyclic by construction, so
   $e\Smash f=e\Smash f'=0$ and $f'\Smash f=f$.  We now have
   \begin{align*}
    f  &= f\Smash 1 = f\Smash (e\Wedge f') = f\Smash f' \\
    f' &= f'\Smash 1 = f'\Smash (e'\Wedge f) = 
          (f'\Smash e') \Wedge (f'\Smash f) \\
       &= (f'\Smash e') \Wedge f.
   \end{align*}
   All of this is valid in $\CL$.  If we pass to $\bCL$ then $e$ and
   $e'$ become the same by~(\ref{eq-Kn}), so
   $f'\Smash e'=f'\Smash e=0$, so $f=f'$ as required.
  \item[(\ref{eq-BPn})] This is~\cite[Theorem 2.1(e)]{ra:lrc}.
  \item[(\ref{eq-kn})] By the same argument as for~(\ref{eq-Pn}), we
   have $\bc{BP\ip{n}/I_n}=\bc{BP\ip{n}}\Smash f(n)$.
   Using~(\ref{eq-BPn}) and Lemma~\ref{lem-kf}, this reduces to
   $k(\{n,\infty\})$.  
  \item[(\ref{eq-KU})] The spectrum $KU$ is Landweber exact with
   strict height one, so it is Bousfield equivalent to $E(1)$
   by~\cite[Corollary~1.12]{ho:blf}.  It is a theorem of
   Wood~\cite{wo:bab} that $KU=KO/\eta$, where $\eta\in\pi_1(KO)$ is
   the Hopf map.  This is essentially equivalent
   to~\cite[Proposition 3.2]{at:ktr}, and the same paper proves the
   standard fact that $\eta^3=0$ in $\pi_*(KO)$.  As $\eta$ is
   nilpotent we find that $KU$ generates the same thick subcategory as
   $KO$, and thus has the same Bousfield class.
  \item[(\ref{eq-kU})] We can take connective covers in the theorem of
   Wood to see that $kU=kO/\eta$, so $\bc{kU}=\bc{kO}$.  If $v$
   denotes the Bott element in $\pi_2(kU)$ then we have $kU/v=H$ and
   $kU[v^{-1}]=KU$, so $\bc{kU}=\bc{KU}\Wedge\bc{H}$, which is
   $k(\{0,1,\infty\})$ by~(\ref{eq-H}) and~(\ref{eq-KU}).
  \item[(\ref{eq-Ell})] Here $Ell$ is intended to denote any of the
   standard Landweber exact versions of elliptic cohomology, which all
   have strict height two, so $Ell=k(\{0,1,2\})$ 
   by~\cite[Corollary~1.12]{ho:blf}.  At primes $p>2$ the spectrum
   $TMF$ is itself a version of $Ell$ and so $\bc{TMF}=k(\{0,1,2\})$.
   For $p\in\{2,3\}$ it is known~\cite{homa:ech} that there is a
   finite spectrum $X$ of type $0$ such that $TMF\Smash X=E(2)$, so we
   again have the same Bousfield class.
  \item[(\ref{eq-I})] We have $I(S)=I(T(0))=j(0,\emptyset)$ by
   definition.  For any finite spectrum $X$, it is easy to see that
   $I(X)=DX\Smash I(S)$.  As $F(n)$ is self-dual we have
   $I(F(n))=F(n)\Smash I(S)$, and this has Bousfield class
   $j(0,\emptyset)$ by Lemma~\ref{lem-fj}.
  \item[(\ref{eq-IQ})] First, we have 
   \[ I(S^\wedge_p)/p = \Sg I((S^\wedge_p)/p) = \Sg I(S/p), \]
   which gives
   \[ \bc{I(S^\wedge_p)}\geq\bc{I(S/p)}=\bc{I(S)}=
          j(0,\emptyset).
   \]
   Next, there is a natural map $i\:S\to S^\wedge_p$,
   with cofibre $X$ say.  We find that $\pi_k(X)=0$ for $k\neq 0$, but
   that $\pi_0(X)=\Z_p/\Zpl$, which is a nontrivial rational vector
   space.  This gives a fibration $IX\to I(S^\wedge_p)\to IS$, giving 
   \[ \bc{I(S^\wedge_p)} \leq \bc{IX}\Wedge \bc{IS}. \]
   Here $IS$ is torsion and $IX$ is rational and nontrivial, so it
   follows that $I(S^\wedge_p)$ is not torsion, and so
   $\bc{I(S^\wedge_p)}\geq\bc{HQ}=k(\{0\})$.  Putting this together,
   we get $\bc{I(S^\wedge_p)}=\bc{IS}\Wedge\bc{H\Q}=j(0,\{0\})$ as
   claimed.  A similar proof works for $S/p^\infty$, using the
   defining cofibration $S\to S\Q\to S/p^\infty$.  
  \item[(\ref{eq-J})] We have $I(T(m))=j(m,\emptyset)$ by definition,
   and this is the same as $I(T(m)\Smash F(n))$ by Lemma~\ref{lem-fj}
   and the self-duality of $F(n)$.
 \end{itemize}
\end{proof}

%\bibliography{../../BiBTeX/newrefs,../../BiBTeX/strickland}{}
\bibliographystyle{gtpart}

\end{document}